\newcommand{\bbK}{\mathbb{K}}
\newcommand{\bbR}{\mathbb{R}}
\newcommand{\bbH}{\mathbb{H}}
\newcommand{\calI}{\mathcal{I}}
\newcommand{\calJ}{\mathcal{J}}
\newcommand{\rmc}{\mathrm{c}}
\newcommand{\rmI}{\mathrm{I}}
\newcommand{\rmN}{\mathrm{N}}
\newcommand{\rmT}{\mathrm{T}}
\newcommand{\Tr}{\mathrm{Tr}}
\newcommand{\sgn}{\mathrm{sgn}}
\newcommand{\abs}[1]{|{#1}|}
\newcommand{\biggparen}[1]{\biggl({#1}\biggr)}
\newcommand{\bigbracket}[1]{\bigl[{#1}\bigr]}
\newcommand{\biggbracket}[1]{\biggl[{#1}\biggr]}
\newcommand{\set}[1]{\{{#1}\}}
\newcommand{\norm}[1]{\|{#1}\|}
\newcommand{\ip}[2]{\langle{#1},{#2}\rangle}
\theoremstyle{plain}
\newtheorem{theorem}{Theorem}
\newtheorem{lemma}[theorem]{Lemma}
\newtheorem*{SH}{Schur-Horn Theorem}
\theoremstyle{definition}
\newtheorem{definition}[theorem]{Definition}
\newtheorem{example}[theorem]{Example}
\begin{document}
\begin{frontmatter}
\title{Constructing finite frames of a given spectrum and set of lengths}

\author[MO]{Jameson Cahill}
\author[AFIT]{Matthew Fickus}
\ead{Matthew.Fickus@afit.edu}
\author[Princeton]{Dustin G.~Mixon}
\author[AFIT]{Miriam J.~Poteet}
\author[MD]{Nathaniel K. Strawn}

\address[MO]{Department of Mathematics, University of Missouri, Columbia, MO 65211, USA}
\address[AFIT]{Department of Mathematics and Statistics, Air Force Institute of Technology, Wright-Patterson Air Force Base, OH 45433, USA}
\address[Princeton]{Program in Applied and Computational Mathematics, Princeton University, Princeton, NJ 08544, USA}
\address[MD]{Department of Mathematics, University of Maryland, College Park, MD 20742, USA}

\begin{abstract}
When constructing finite frames for a given application, the most important consideration is the spectrum of the frame operator.  Indeed, the minimum and maximum eigenvalues of the frame operator are the optimal frame bounds, and the frame is tight precisely when this spectrum is constant.  Often, the second-most important design consideration is the lengths of frame vectors: Gabor, wavelet, equiangular and Grassmannian frames are all special cases of equal norm frames, and unit norm tight frame-based encoding is known to be optimally robust against additive noise and erasures.  We consider the problem of constructing frames whose frame operator has a given spectrum and whose vectors have prescribed lengths.  For a given spectrum and set of lengths, the existence of such frames is characterized by the Schur-Horn Theorem---they exist if and only if the spectrum majorizes the squared lengths---the classical proof of which is nonconstructive.   Certain construction methods, such as harmonic frames and spectral tetris, are known in the special case of unit norm tight frames, but even these provide but a few examples from the manifold of all such frames, the dimension of which is known and nontrivial.  In this paper, we provide a new method for explicitly  constructing any and all frames whose frame operator has a prescribed spectrum and whose vectors have prescribed lengths.  The method itself has two parts.  In the first part, one chooses eigensteps---a sequence of interlacing spectra---that transform the trivial spectrum into the desired one.  The second part is to explicitly compute the frame vectors in terms of these eigensteps; though nontrivial, this process is nevertheless straightforward enough to be implemented by hand, involving only arithmetic, square roots and matrix multiplication.
\end{abstract}

\begin{keyword}
frame \sep construction \sep tight \sep unit norm \sep equal norm \sep interlacing \sep majorization \sep Schur-Horn \MSC[2010] 42C15
\end{keyword}
\end{frontmatter}


\section{Introduction}
Letting $\bbK$ be either the real or complex field, the \textit{synthesis operator} of a sequence of vectors $F=\set{f_n}_{n=1}^N$ in an $M$-dimensional Hilbert space $\bbH_M$ over $\bbK$ is $F:\mathbb{K}^N\rightarrow\bbH_M$, $\smash{Fg:=\sum_{n=1}^N g(n)f_n}$.  Viewing $\bbH_M$ as $\bbK^M$, $F$ is the $M\times N$ matrix whose columns are the $f_n$'s.  Note that here and throughout, we make no notational distinction between the vectors themselves and the synthesis operator they induce.  The vectors $F$ are said to be a \textit{frame} for $\bbH_M$ if there exists \textit{frame bounds} $0<A\leq B<\infty$ such that $A\norm{f}^2\leq\norm{F^*f}^2\leq B\norm{f}^2$ for all $f\in\bbH_M$.  In this finite-dimensional setting, the optimal frame bounds $A$ and $B$ of an arbitrary $F$ are the least and greatest eigenvalues of the \textit{frame operator}:
\begin{equation}
\label{equation.definition of frame operator}
FF^*=\sum_{n=1}^{N}f_nf_n^*,
\end{equation}
respectively.  Here, $f_n^*$ is the linear functional $f_n^*:\bbH_M\rightarrow\bbK$, $f_n^*f:=\ip{f}{f_n}$.  In particular, we have that $F$ is a frame if and only if the $f_n$'s span $\bbH_M$, which necessitates $M\leq N$.

Frames provide numerically stable methods for finding overcomplete decompositions of vectors, and as such are useful tools in various signal processing applications~\cite{KovacevicC:07a,KovacevicC:07b}.  Indeed, if $F$ is a frame, then any $f\in\bbH_M$ can be decomposed as
\begin{equation}
\label{equation.frame expansion}
f=F\tilde{F}^*f=\sum_{n=1}^{N}\ip{f}{\tilde{f}_n}f_n,
\end{equation}
where $\tilde{F}=\set{\tilde{f}_n}_{n=1}^{N}$ is a \textit{dual frame} of $F$, meaning it satisfies $F\tilde{F}^*=\rmI$.  The most often-used dual frame is the \textit{canonical} dual, namely the pseudoinverse $\tilde{F}=(FF^*)^{-1}F$.  Note that computing a canonical dual involves the inversion of the frame operator.  As such, when designing a frame for a given application, it is important to retain control over the spectrum $\set{\lambda_m}_{m=1}^{M}$ of $FF^*$.  Here and throughout, such spectra are arranged in nonincreasing order, with the optimal frame bounds $A$ and $B$ being $\lambda_M$ and $\lambda_1$, respectively.

Of particular interest are \textit{tight frames}, namely frames for which $A=B$.  Note this occurs precisely when $\lambda_m=A$ for all $m$, meaning $FF^*=A\rmI$.  In this case, the canonical dual is given by $\tilde{f}_n=\frac1A f_n$, and~\eqref{equation.frame expansion} becomes an overcomplete generalization of an orthonormal basis decomposition.  Tight frames are not hard to construct: we simply need the rows of $F$ to be orthogonal and have constant squared norm $A$.  However, this problem becomes significantly more difficult if we further require the $f_n$'s---the columns of $F$---to have prescribed lengths.

In particular, much attention has been paid to the problem of constructing \textit{unit norm tight frames} (UNTFs): tight frames for which $\norm{f_n}=1$ for all $n$.  Here, since $MA=\Tr(FF^*)=\Tr(F^*F)=N$, we see that $A$ is necessarily $\frac NM$.  UNTFs are known to be optimally robust with respect to additive noise~\cite{GoyalVT:98} and erasures \cite{CasazzaK:03,HolmesP:04}.  Moreover, all unit norm sequences $F$ satisfy the zeroth-order \textit{Welch bound} \smash{$\Tr[(FF^*)^2]\geq\frac{N^2}{M}$}, which is achieved precisely when $F$ is a UNTF~\cite{Waldron:03,Welch:74}; a physics-inspired interpretation of this fact leading to an optimization-based proof of existence of UNTFs is given in~\cite{BenedettoF:03}.  We further know that such frames are commonplace: when $N\geq M+1$, the manifold of all $M\times N$ real UNTFs, modulo rotations, is known to have dimension $(M-1)(N-M-1)$~\cite{DykemaS:06}.  Essentially, when $N=M+1$, this manifold is zero-dimensional since the only UNTFs are regular simplices~\cite{GoyalKK:01}; each additional unit norm vector injects $M-1$ additional degrees of freedom into this manifold, in accordance with the dimension of the unit sphere in $\bbR^M$.  Local parametrizations of this manifold are given in~\cite{Strawn:11}.  The \textit{Paulsen problem} involves projecting a given frame onto this manifold, and differential calculus-based methods for doing so are given in~\cite{BodmannC:10,CasazzaFM:11}.

In light of these facts, it is surprising to note how few explicit constructions of UNTFs are known.  Indeed, a constructive characterization of all UNTFs is only known for $M=2$~\cite{GoyalKK:01}.  For arbitrary $M$ and $N$, there are only two known general construction techniques: truncations of discrete Fourier transform matrices known as \textit{harmonic frames}~\cite{GoyalKK:01} and a sparse construction method dubbed \textit{spectral tetris}~\cite{CasazzaFMWZ:11}.  To emphasize this point, we note that there are only a small finite number of known constructions of $3\times 5$ UNTFs, despite the fact that an infinite number of such frames exist even modulo rotations, their manifold being of dimension $(M-1)(N-M-1)=2$.  The reason for this is that in order to construct a UNTF, one must solve a large system of quadratic equations in many variables: the columns of $F$ must have unit norm, and the rows of $F$ must be orthogonal with constant norm \smash{$(\frac NM)^{\frac12}$}.

In this paper, we show how to explicitly construct all UNTFs, and moreover, how to explicitly construct every frame whose frame operator has a given arbitrary spectrum and whose vectors are of given arbitrary lengths.  To do so, we build on the existing theory of majorization and the Schur-Horn Theorem.  To be precise, given two nonnegative nonincreasing sequences $\set{\lambda_n}_{n=1}^N$ and $\set{\mu_n}_{n=1}^{N}$, we say that $\set{\lambda_n}_{n=1}^N$ \textit{majorizes} $\set{\mu_n}_{n=1}^{N}$, denoted $\set{\lambda_n}_{n=1}^N\succeq\set{\mu_n}_{n=1}^{N}$, if
\begin{align*}
\sum_{n'=1}^{n}\lambda_{n'}
&\geq\sum_{n'=1}^{n}\mu_{n'}\qquad\forall n=1,\dotsc,N-1,\\
\sum_{n'=1}^{N}\lambda_{n'}
&=\sum_{n'=1}^{N}\mu_{n'}.
\end{align*}
Viewed as discrete functions over the axis $\set{1,\dotsc,N}$, having $\set{\lambda_n}_{n=1}^N$ majorize $\set{\mu_n}_{n=1}^{N}$ means that the total area under both curves is equal, and that the area under $\set{\lambda_n}_{n=1}^N$ is distributed more to the left than that of $\set{\mu_n}_{n=1}^{N}$.  A classical result of Schur~\cite{Schur:23} states that the spectrum of a self-adjoint positive semidefinite matrix necessarily majorizes its diagonal entries.  A few decades later, Horn gave a nonconstructive proof of a converse result~\cite{Horn:54}, showing that if $\set{\lambda_n}_{n=1}^N\succeq\set{\mu_n}_{n=1}^{N}$, then there exists a self-adjoint matrix that has $\set{\lambda_n}_{n=1}^N$ as its spectrum and $\set{\mu_n}_{n=1}^{N}$ as its diagonal.  These two results are collectively known as the Schur-Horn Theorem:
\begin{SH}
There exists a positive semidefinite self-adjoint matrix with spectrum $\set{\lambda_n}_{n=1}^{N}$ and diagonal entries $\set{\mu_n}_{n=1}^{N}$ if and only if $\set{\lambda_n}_{n=1}^N\succeq\set{\mu_n}_{n=1}^{N}$.
\end{SH}
Over the years, several methods for explicitly constructing Horn's matrices have been found; see~\cite{DhillonHST:05} for a nice overview.  Many current methods rely on Givens rotations~\cite{CasazzaL:06,DhillonHST:05,ViswanathA:99}, while others involve optimization~\cite{Chu:95}.  With regards to frame theory, the significance of the Schur-Horn Theorem is that it completely characterizes whether or not there exists a frame whose frame operator has a given spectrum and whose vectors have given lengths; this follows from applying it to the \textit{Gram matrix} $F^*F$, whose diagonal entries are the values $\set{\norm{f_n}^2}_{n=1}^{N}$ and whose spectrum $\set{\lambda_n}_{n=1}^{N}$ is a zero-padded version of the spectrum $\set{\lambda_m}_{m=1}^{M}$ of the frame operator $FF^*$.  Indeed, majorization inequalities arose during the search for tight frames with given lengths~\cite{CasazzaFKLT:06,DykemaFKLOW:03}, and the explicit connection between frames and the Schur-Horn Theorem is noted in \cite{AntezanaMRS:07,TroppDHS:05}.  This connection was then exploited to solve various frame theory problems, such as frame completion~\cite{MasseyR:08}.

In this paper, we follow the approach of~\cite{HornJ:85} in which majorization is viewed as the end result of the repeated application of a more basic idea: eigenvalue interlacing.  To be precise, a nonnegative nonincreasing sequence $\set{\gamma_m}_{m=1}^{M}$ \textit{interlaces} on another such sequence $\set{\beta_m}_{m=1}^{M}$, denoted $\set{\beta_m}_{m=1}^{M}\sqsubseteq\set{\gamma_m}_{m=1}^{M}$, provided
\begin{equation}
\label{equation.definition of interlacing}
\beta_M\leq\gamma_M\leq\beta_{M-1}\leq\gamma_{M-1}\leq\dots\leq\beta_2\leq\gamma_2\leq\beta_1\leq\gamma_1.
\end{equation}
Under the convention $\gamma_{M+1}:=0$, we have that $\set{\beta_m}_{m=1}^{M}\sqsubseteq\set{\gamma_m}_{m=1}^{M}$ if and only if $\gamma_{m+1}\leq\beta_m\leq\gamma_m$ for all $m=1,\dotsc,M$.  Interlacing arises in the context of frame theory by considering partial sums of the frame operator~\eqref{equation.definition of frame operator}.  To be precise, given any sequence of vectors $F=\set{f_n}_{n=1}^{N}$ in $\bbH_M$, then for every $n=1,\dotsc,N$, we consider the partial sequence of vectors $F_n:=\set{f_{n'}^{}}_{n'=1}^{n}$.    Note that $F_N^{}=F$ and the frame operator of $F_n^{}$ is
\begin{equation}
\label{equation.definition of partial frame operator}
F_n^{}F_n^*=\sum_{n'=1}^{n}f_{n'}^{}f_{n'}^*.
\end{equation}
Let $\set{\lambda_{n;m}}_{m=1}^{M}$ denote the spectrum of~\eqref{equation.definition of partial frame operator}.  For any $n=1,\dotsc,N-1$,~\eqref{equation.definition of partial frame operator} gives that $F_{n+1}^{}F_{n+1}^*=F_n^{}F_n^*+f_{n+1}^{}f_{n+1}^*$  and so a classical result~\cite{HornJ:85} involving the addition of rank-one positive operators gives that $\set{\lambda_{n;m}}_{m=1}^{M}\sqsubseteq\set{\lambda_{n+1;m}}_{m=1}^{M}$.  Moreover, if $\norm{f_n}^2=\mu_n$ for all $n=1,\dotsc,N$, then for any such $n$,
\begin{equation}
\label{equation.trace condition}
\sum_{m=1}^M\lambda_{n;m}
=\Tr(F_n^{}F_n^*)
=\Tr(F_n^*F_n^{})
=\sum_{n'=1}^{n}\norm{f_{n'}}^2
=\sum_{n'=1}^n\mu_{n'}.
\end{equation}
Note that as $n$ increases, the Gram matrix grows in dimension but the frame operator does not since $F_n^*F_n^{}:\bbK^n\rightarrow\bbK^n$ but $F_n^{}F_n^*:\bbH_M\rightarrow\bbH_M$.  We call a sequence of interlacing spectra that satisfy~\eqref{equation.trace condition} a sequence of \textit{eigensteps}:
\begin{definition}
\label{definition.eigensteps}
Given nonnegative nonincreasing sequences $\set{\lambda_m}_{m=1}^{M}$ and $\set{\mu_n}_{n=1}^{N}$, a sequence of \textit{eigensteps} is a doubly-indexed sequence of sequences $\set{\set{\lambda_{n;m}}_{m=1}^{M}}_{n=0}^{N}$ for which:
\begin{enumerate}
\renewcommand{\labelenumi}{(\roman{enumi})}
\item The initial sequence is trivial:
\begin{equation*}
\lambda_{0;m}=0\qquad\forall m=1,\dotsc,M.
\end{equation*}
\item The final sequence is $\set{\lambda_m}_{m=1}^{M}$:
\begin{equation*}
\lambda_{N;m}=\lambda_m\qquad\forall m=1,\dotsc,M.
\end{equation*}
\item
The sequences interlace:
\begin{equation*}
\set{\lambda_{n-1;m}}_{m=1}^{M}\sqsubseteq\set{\lambda_{n;m}}_{m=1}^{M}\qquad\forall n=1,\dotsc,N.
\end{equation*}
\item
The trace condition is satisfied:
\begin{equation*}
\sum_{m=1}^{M}\lambda_{n;m}=\sum_{n'=1}^{n}\mu_{n'}\qquad\forall n=1,\dotsc,N.
\end{equation*}
\end{enumerate}
\end{definition}
As we have just discussed, every sequence of vectors whose frame operator has the spectrum $\set{\lambda_m}_{m=1}^{N}$ and whose vectors have squared lengths $\set{\mu_n}_{n=1}^{N}$ generates a sequence of eigensteps.  In the next section, we adapt a proof technique of~\cite{HornJ:85} to show the converse is true.  Specifically, Theorem~\ref{theorem.necessity and sufficiency of eigensteps} characterizes and proves the existence of sequences of vectors that generate a given sequence of eigensteps.  In Section~3, we then use this characterization to provide an algorithm for explicitly constructing all such sequences of vectors; see Theorem~\ref{theorem.explicit frame construction}.  Though nontrivial, this algorithm is nevertheless straightforward enough to be implemented by hand in small-dimensional examples, involving only arithmetic, square roots and matrix multiplication.  We will see that once the eigensteps have been chosen, the algorithm gives little freedom in picking the frame vectors themselves.  That is, modulo rotations, the eigensteps are the free parameters when designing a frame whose frame operator has a given spectrum and whose vectors have given lengths.  

The significance of these methods is that they explicitly construct every possible finite frame of a given spectrum and set of lengths.  Computing the Gram matrices of such frames produces every possible matrix that satisfies the Schur-Horn Theorem; previous methods have only constructed a subset of such matrices.  Moreover, in the special case where the spectrums and lengths are constant, these methods construct every equal norm tight frame.  This helps narrow the search for frames we want for applications: tight Gabor, wavelet, equiangular and Grassmannian frames.


\section{The necessity and sufficiency of eigensteps}

\noindent The purpose of this section is to prove the following result:
\begin{theorem}
\label{theorem.necessity and sufficiency of eigensteps}
For any nonnegative nonincreasing sequences $\set{\lambda_m}_{m=1}^{M}$ and $\set{\mu_n}_{n=1}^{N}$, every sequence of vectors $F=\set{f_n}_{n=1}^{N}$ in $\bbH_M$ whose frame operator $FF^*$ has spectrum $\set{\lambda_m}_{m=1}^{M}$ and which satisfies $\norm{f_n}^2=\mu_n$ for all $n$ can be constructed by the following process:
\begin{enumerate}
\renewcommand{\labelenumi}{\Alph{enumi}.}
\item
Pick eigensteps $\set{\set{\lambda_{n;m}}_{m=1}^{M}}_{n=0}^{N}$ as in Definition~\ref{definition.eigensteps}. 
\item
For each $n=1,\dotsc,N$, consider the polynomial:
\begin{equation}
\label{equation.necessity and sufficiency of eigensteps 1}
p_n(x):=\prod_{m=1}^{M}(x-\lambda_{n;m}).
\end{equation}
Take any $f_1\in\bbH_M$ such that $\norm{f_1}^2=\mu_1$.  For each $n=1,\dotsc,N-1$, choose any $f_{n+1}$ such that
\begin{equation}
\label{equation.necessity and sufficiency of eigensteps 2}
\norm{P_{n;\lambda}f_{n+1}}^2=-\lim_{x\rightarrow\lambda}(x-\lambda)\frac{p_{n+1}(x)}{p_n(x)}
\end{equation}
for all $\lambda\in\set{\lambda_{n;m}}_{m=1}^M$, where $P_{n;\lambda}$ denotes the orthogonal projection operator onto the eigenspace $\rmN(\lambda\rmI-F_n^{}F_n^*)$ of the frame operator $F_n^{}F_n^*$ of $F_n=\set{f_{n'}}_{n'=1}^{n}$.  The limit in~\eqref{equation.necessity and sufficiency of eigensteps 2} exists and is nonpositive.
\end{enumerate}
Conversely, any $F$ constructed by this process has $\set{\lambda_m}_{m=1}^{M}$ as the spectrum of $FF^*$ and $\norm{f_n}^2=\mu_n$ for all $n$.

\noindent Moreover, for any $F$ constructed in this manner, the spectrum of $F_n^{}F_n^*$ is $\set{\lambda_{n;m}}_{m=1}^{M}$ for all $n=1,\dotsc,N$.
\end{theorem}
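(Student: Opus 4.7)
The plan is to derive both directions from a single consequence of the matrix determinant lemma. Since $F_{n+1}F_{n+1}^* = F_n F_n^* + f_{n+1} f_{n+1}^*$ is a rank-one update of a self-adjoint operator, the lemma gives
\[
\det(x\rmI - F_{n+1}F_{n+1}^*) = \det(x\rmI - F_n F_n^*)\bigl(1 - f_{n+1}^*(x\rmI - F_n F_n^*)^{-1} f_{n+1}\bigr)
\]
for $x$ outside the spectrum of $F_n F_n^*$. Expanding the resolvent in the eigenbasis of $F_n F_n^*$ and grouping by its distinct eigenvalues $\lambda$ turns this into the central identity
\[
\frac{p_{n+1}(x)}{p_n(x)} = 1 - \sum_{\lambda} \frac{\norm{P_{n;\lambda} f_{n+1}}^2}{x - \lambda},
\]
which is the starting point for both directions.

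For the forward direction, given $F$ satisfying the hypotheses, define $\lambda_{n;m}$ to be the $m$th eigenvalue of $F_n F_n^*$. The four conditions in Definition~\ref{definition.eigensteps} are then immediate: (i) from $F_0 F_0^*=0$, (ii) from $F_N = F$, (iii) from the classical interlacing result for rank-one positive updates, and (iv) from the trace computation~\eqref{equation.trace condition}. Multiplying the central identity by $(x-\lambda)$ and letting $x\to\lambda$ isolates a single term on the right and yields~\eqref{equation.necessity and sufficiency of eigensteps 2}.

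For the reverse direction, I proceed by induction on $n$, maintaining the invariant that $F_n F_n^*$ has spectrum $\set{\lambda_{n;m}}_{m=1}^M$. The base case $n=1$ is automatic: interlacing with the trivial initial sequence together with the trace condition force $\lambda_{1;1}=\mu_1$ and $\lambda_{1;m}=0$ for $m\geq 2$, which is precisely the spectrum of $f_1 f_1^*$ for any $f_1$ with $\norm{f_1}^2=\mu_1$. For the inductive step, three facts must be established in order for the right-hand side of~\eqref{equation.necessity and sufficiency of eigensteps 2} to prescribe an admissible collection of projection norms. First, the limit exists, since interlacing forces the multiplicity of each $\lambda$ as a root of $p_{n+1}$ to drop by at most one relative to its multiplicity as a root of $p_n$. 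Second, and this will be the main obstacle, the limit is nonpositive: after cancelling the common factor $(x-\lambda)^{k-1}$, the residue $r_\lambda$ equals $q_{n+1}(\lambda)/q_n(\lambda)$, where $q_n,q_{n+1}$ are the products of the remaining linear factors, and~\eqref{equation.definition of interlacing} pairs these factors so that numerator and denominator carry the same number of negative factors on each side of $\lambda$, forcing $r_\lambda \leq 0$. Third, $\sum_\lambda (-r_\lambda) = \mu_{n+1}$: this follows by comparing the coefficient of $x^{M-1}$ in the partial fraction identity $p_{n+1}(x) - p_n(x) = \sum_\lambda r_\lambda\, p_n(x)/(x-\lambda)$ and using condition (iv).

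With these facts in place, one builds $f_{n+1}=\sum_\lambda v_\lambda$ by choosing any $v_\lambda \in \rmN(\lambda\rmI - F_n F_n^*)$ of squared norm $-r_\lambda$; orthogonality of the eigenspaces gives $P_{n;\lambda}f_{n+1}=v_\lambda$, so~\eqref{equation.necessity and sufficiency of eigensteps 2} holds and $\norm{f_{n+1}}^2 = \mu_{n+1}$. Plugging these norms back into the central identity recovers $\det(x\rmI - F_{n+1} F_{n+1}^*) = p_{n+1}(x)$, so $F_{n+1} F_{n+1}^*$ has spectrum $\set{\lambda_{n+1;m}}_{m=1}^M$, closing the induction and simultaneously establishing the ``Moreover'' clause.
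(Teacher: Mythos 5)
Your overall architecture is sound and matches the paper's proof in outline, but you reach the two key ingredients by genuinely different routes. The paper obtains your ``central identity'' in Theorem~\ref{theorem.necessary lengths of projections} by passing to the $(n+1)\times(n+1)$ Gram matrix and conjugating by a unimodular-determinant matrix built from the SVD of $F_n$, which forces a case split between $n<M$ and $M\leq n$; your direct application of the matrix determinant lemma to the rank-one update $F_{n+1}^{}F_{n+1}^*=F_n^{}F_n^*+f_{n+1}^{}f_{n+1}^*$, with the resolvent expanded in an eigenbasis of $F_n^{}F_n^*$, yields the same identity more quickly. In the reverse direction the paper identifies the characteristic polynomial of the constructed $F_{n+1}^{}F_{n+1}^*$ with $p_{n+1}$ by comparing residues and invoking Lemma~\ref{lemma.equal limits}; you instead reconstruct $p_{n+1}$ directly from $p_n$ and the residues via the partial-fraction identity $p_{n+1}(x)-p_n(x)=\sum_\lambda r_\lambda\,p_n(x)/(x-\lambda)$, which is legitimate because (as your first fact records) interlacing keeps every pole of $(p_{n+1}-p_n)/p_n$ simple, and which as a bonus gives $\sum_\lambda(-r_\lambda)=\mu_{n+1}$ from the $x^{M-1}$ coefficient and Definition~\ref{definition.eigensteps}(iv). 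This replaces Lemma~\ref{lemma.equal limits} and the norm bookkeeping of Lemma~\ref{lemma.eigensteps yield desired properties} in one stroke. Citing the classical rank-one interlacing theorem for condition (iii) in the forward direction is also fine (the paper itself invokes it in the introduction, though its proof re-derives interlacing from Lemma~\ref{lemma.interlacing and nonpositive limits} to stay self-contained), and since your inductive verification uses only the projection norms $\norm{P_{n;\lambda}f_{n+1}}^2=-r_\lambda$, it does cover every admissible choice of $f_{n+1}$, as the converse statement requires.

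The one spot that needs repair is your justification that $r_\lambda\leq0$. As written, the claim that numerator and denominator of $q_{n+1}(\lambda)/q_n(\lambda)$ ``carry the same number of negative factors'' would make that ratio positive, i.e.\ it would give $r_\lambda\geq0$, the opposite of what you need. The correct count, in the only nontrivial case where the multiplicity of $\lambda$ drops by one from $p_n$ to $p_{n+1}$, is: the factors coming from roots strictly below $\lambda$ appear in equal numbers in $q_{n+1}(\lambda)$ and $q_n(\lambda)$, while interlacing forces the numerator to contain exactly one more root strictly above $\lambda$ than the denominator (if $\lambda=\lambda_{n;m}$ first occurs at index $m_p$ with multiplicity $L_p$, then $\lambda_{n+1;m_p}>\lambda>\lambda_{n+1;m_p+L_p}$), so the ratio has exactly one unmatched negative factor and is strictly negative. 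This corrected count is precisely the forward direction of the paper's Lemma~\ref{lemma.interlacing and nonpositive limits}; with it in place, your argument goes through.
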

We note that as it stands, Theorem~\ref{theorem.necessity and sufficiency of eigensteps} is not an easily-implementable algorithm, as Step~A requires one to select a valid sequence of eigensteps---not an obvious feat---while Step~B requires one to compute orthonormal eigenbases for each $F_n$.  These concerns will be addressed in the following section.  We further note that Theorem~\ref{theorem.necessity and sufficiency of eigensteps} only claims to construct all possible such $F$, sidestepping the issue of whether such an $F$ actually exists for a given $\set{\lambda_m}_{m=1}^{M}$ and $\set{\mu_n}_{n=1}^{N}$.  This issue is completely resolved by the Schur-Horn Theorem.  Indeed, in the case where $M\leq N$, \cite{AntezanaMRS:07} shows that there exists a sequence of vectors $F=\set{f_n}_{n=1}^{N}$ in $\bbH_M$ whose frame operator $FF^*$ has spectrum $\set{\lambda_m}_{m=1}^{M}$ and which satisfies $\norm{f_n}^2=\mu_n$ for all $n$ if and only if $\set{\lambda_m}_{m=1}^{M}\cup\set{0}_{n=M+1}^{N}\succeq\set{\mu_n}_{n=1}^{N}$.  In the case where $M>N$, a similar argument shows that such a sequence of vectors exists if and only if $\set{\lambda_m}_{m=1}^{N}\succeq\set{\mu_n}_{n=1}^{N}$ and $\lambda_m=0$ for all $m=N+1,\dotsc,M$.  As Step~B of Theorem~\ref{theorem.necessity and sufficiency of eigensteps} can always be completed for any valid sequence of eigensteps, these majorization conditions in fact characterize those values $\set{\lambda_m}_{m=1}^{M}$ and $\set{\mu_n}_{n=1}^{N}$ for which Step~A can successfully be performed; we leave a deeper exploration of this fact for future work.  In order to prove Theorem~\ref{theorem.necessity and sufficiency of eigensteps}, we first obtain some supporting results.  The following lemma gives a first taste of the connection between eigensteps and our frame construction problem:
\begin{lemma}
\label{lemma.eigensteps yield desired properties}
Let $\set{\lambda_m}_{m=1}^{M}$ and $\set{\mu_n}_{n=1}^{N}$ be nonnegative and nonincreasing, and let $\set{\set{\lambda_{n;m}}_{m=1}^{M}}_{n=0}^{N}$ be any corresponding sequence of eigensteps as in Definition~\ref{definition.eigensteps}.  If a sequence of vectors $F=\set{f_n}_{n=1}^{N}$ has the property that the spectrum of the frame operator $F_n^{}F_n^*$ of $F_n=\set{f_{n'}}_{n'=1}^n$ is $\set{\lambda_{n;m}}_{m=1}^{M}$ for all $n=1,\dotsc,N$, then the spectrum of $FF^*$ is $\set{\lambda_m}_{m=1}^{M}$ and $\norm{f_n}^2=\mu_n$ for all $n=1,\dotsc,N$.
\end{lemma}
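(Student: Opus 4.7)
The lemma follows almost immediately from the defining properties of eigensteps, so my plan is to simply extract the two conclusions from properties (ii) and (iv) in Definition~\ref{definition.eigensteps}. There is no real obstacle here; the only thing to be careful about is that the hypothesis only tells us the spectrum of $F_n^{} F_n^*$ for $n = 1, \dots, N$, not for $n = 0$, so I will use the trace identity at each level rather than trying to compare with the ``trivial'' $n = 0$ sequence directly.

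First I would handle the spectrum claim. Since $F = F_N^{}$ by construction, $FF^* = F_N^{} F_N^*$, whose spectrum is $\set{\lambda_{N;m}}_{m=1}^{M}$ by hypothesis, and this equals $\set{\lambda_m}_{m=1}^{M}$ by property (ii) of Definition~\ref{definition.eigensteps}. That disposes of the first assertion in one line.

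Next I would handle the length claim by comparing two expressions for $\Tr(F_n^{} F_n^*)$. On the one hand, because the spectrum of $F_n^{} F_n^*$ is $\set{\lambda_{n;m}}_{m=1}^{M}$, the trace condition (iv) gives
\begin{equation*}
\Tr(F_n^{} F_n^*) = \sum_{m=1}^{M} \lambda_{n;m} = \sum_{n'=1}^{n} \mu_{n'}.
\end{equation*}
On the other hand, by the usual cyclic identity used in~\eqref{equation.trace condition},
\begin{equation*}
\Tr(F_n^{} F_n^*) = \Tr(F_n^* F_n^{}) = \sum_{n'=1}^{n} \norm{f_{n'}}^2.
\end{equation*}
For $n = 1$ this already gives $\norm{f_1}^2 = \mu_1$, and for $n \geq 2$ I would subtract the identity for $n-1$ from the identity for $n$ to obtain $\norm{f_n}^2 = \mu_n$. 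A one-line induction (or simply quoting ``telescoping'') then yields the conclusion for every $n = 1, \dots, N$.

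In summary, the proof is a direct unpacking of properties (ii) and (iv), plus the cyclicity of the trace; properties (i) and (iii) play no role in this particular lemma, and no nontrivial computation is required.
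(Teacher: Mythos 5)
Your proposal is correct and follows essentially the same route as the paper's proof: identify the spectrum of $FF^*=F_N^{}F_N^*$ via Definition~\ref{definition.eigensteps}(ii), then equate $\Tr(F_n^{}F_n^*)=\Tr(F_n^*F_n^{})$ with the trace condition (iv) and difference consecutive levels to recover $\norm{f_n}^2=\mu_n$. No gaps; the observation that (i) and (iii) are not needed here is also consistent with the paper's argument.
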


\begin{proof}
Definition~\ref{definition.eigensteps}(ii) immediately gives that the spectrum of $FF^*=F_N^{}F_N^{*}$ is indeed $\set{\lambda_{n}}_{m=1}^{M}=\set{\lambda_{N;m}}_{m=1}^{M}$, as claimed.  Moreover, for any $n=1,\dotsc,N$, Definition~\ref{definition.eigensteps}(iv) gives
\begin{equation}
\label{equation.proof of explicit frame construction 0}
\sum_{n'=1}^{n}\norm{f_{n'}}^2
=\Tr(F_n^*F_n^{})
=\Tr(F_n^{}F_n^*)
=\sum_{m=1}^{M}\lambda_{n;m}
=\sum_{n'=1}^{n}\mu_{n'}.
\end{equation}
Letting $n=1$ in \eqref{equation.proof of explicit frame construction 0} gives $\norm{f_1}^2=\mu_1$, while for $n=2,\dotsc,N$, considering \eqref{equation.proof of explicit frame construction 0} at both $n$ and $n-1$ gives
\begin{equation*}
\norm{f_n}^2
=\sum_{n'=1}^{n}\norm{f_{n'}}^2-\sum_{n'=1}^{n-1}\norm{f_{n'}}^2
=\sum_{n'=1}^{n}\mu_{n'}-\sum_{n'=1}^{n-1}\mu_{n'}
=\mu_n.\qedhere
\end{equation*}
\end{proof}

The next result gives conditions that a vector must satisfy in order for it to perturb the spectrum of a given frame operator in a desired way, and was inspired by the proof of Theorem~4.3.10 in~\cite{HornJ:85}.
\begin{theorem}
\label{theorem.necessary lengths of projections}
Let $F_n=\set{f_{n'}}_{n'=1}^{n}$ be an arbitrary sequence of vectors in $\bbH_M$ and let $\set{\lambda_{n;m}}_{m=1}^{M}$ denote the eigenvalues of the corresponding frame operator $F_{n}^{}F_{n}^{*}$.  For any choice of $f_{n+1}$ in $\bbH_M$, let $F_{n+1}=\set{f_{n'}}_{n'=1}^{n+1}$. Then for any $\lambda\in\set{\lambda_{n;m}}_{m=1}^{M}$, the norm of the projection of $f_{n+1}$ onto the eigenspace $\rmN(\lambda\rmI-F_{n}^{}F_{n}^{*})$ is given by
\begin{equation*}
\norm{P_{n;\lambda}f_{n+1}}^2
=-\lim_{x\rightarrow\lambda}(x-\lambda)\frac{p_{n+1}(x)}{p_n(x)},
\end{equation*}
where $p_n(x)$ and $p_{n+1}(x)$ denote the characteristic polynomials of $F_{n}^{}F_{n}^{*}$ and $F_{n+1}^{}F_{n+1}^{*}$, respectively.
\end{theorem}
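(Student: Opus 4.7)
The plan is to exploit the fact that $F_{n+1}^{}F_{n+1}^*=F_n^{}F_n^*+f_{n+1}^{}f_{n+1}^*$ is a rank-one positive perturbation of $F_n^{}F_n^*$, and then to read off $\norm{P_{n;\lambda}f_{n+1}}^2$ from the residues of $p_{n+1}(x)/p_n(x)$. The key intermediate identity, obtained from the matrix determinant lemma applied to $xI-F_n^{}F_n^*-f_{n+1}^{}f_{n+1}^*$, is
\begin{equation*}
p_{n+1}(x)=p_n(x)\,\bigl(1-f_{n+1}^*(x\rmI-F_n^{}F_n^*)^{-1}f_{n+1}^{}\bigr),
\end{equation*}
valid wherever $x$ is not an eigenvalue of $F_n^{}F_n^*$. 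I would establish this first as a general fact for any self-adjoint operator and any rank-one additive perturbation; this is the computational core of the argument.

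Next, I would diagonalize. Choose an orthonormal eigenbasis $\{u_m\}_{m=1}^M$ of $\bbH_M$ with $F_n^{}F_n^*u_m=\lambda_{n;m}u_m$, and write $f_{n+1}=\sum_{m=1}^M\ip{f_{n+1}}{u_m}u_m$. Since $(x\rmI-F_n^{}F_n^*)^{-1}u_m=(x-\lambda_{n;m})^{-1}u_m$, the identity above becomes
\begin{equation*}
\frac{p_{n+1}(x)}{p_n(x)}=1-\sum_{m=1}^{M}\frac{\abs{\ip{f_{n+1}}{u_m}}^2}{x-\lambda_{n;m}}.
\end{equation*}
This is the partial-fraction form of the ratio of characteristic polynomials; its residues carry precisely the squared projection norms onto each one-dimensional eigenspace, and the sum over any eigenspace of multiplicity greater than one assembles into the projection onto the corresponding eigenspace.

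Finally, I would take the limit. Fix $\lambda\in\{\lambda_{n;m}\}_{m=1}^M$ and group the sum by distinct eigenvalues of $F_n^{}F_n^*$, so that the summand over the index set $\{m:\lambda_{n;m}=\lambda\}$ equals $\norm{P_{n;\lambda}f_{n+1}}^2/(x-\lambda)$. Multiplying by $(x-\lambda)$ and letting $x\to\lambda$ kills every other term (including the constant $1$ and the summands with $\lambda_{n;m}\neq\lambda$, whose factors $(x-\lambda)/(x-\lambda_{n;m})$ vanish in the limit), leaving exactly $-\norm{P_{n;\lambda}f_{n+1}}^2$, as required. The limit exists and is nonpositive because it is minus a squared norm.

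The only delicate point — the ``main obstacle,'' though modest — is bookkeeping for repeated eigenvalues: one must check that the multiplicity $d$ of $\lambda$ as a root of $p_n(x)$ is reduced by exactly $1$ in the combined expression, so that $(x-\lambda)p_{n+1}(x)/p_n(x)$ has a genuine finite limit at $\lambda$. This is immediate from the partial-fraction formula above, since each summand $\abs{\ip{f_{n+1}}{u_m}}^2/(x-\lambda_{n;m})$ with $\lambda_{n;m}=\lambda$ contributes a simple pole at $\lambda$, while $p_n(x)$ contributes a zero of order $d$; pairing them gives a pole of order exactly $1$ in the ratio (or a regular point, in which case $\norm{P_{n;\lambda}f_{n+1}}^2=0$ and the stated identity still holds).
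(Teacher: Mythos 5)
Your proof is correct, and it reaches the theorem by a somewhat different route than the paper. You apply the matrix determinant lemma directly to the $M\times M$ frame operator, writing $\det(x\rmI-F_{n+1}^{}F_{n+1}^*)=\det(x\rmI-F_n^{}F_n^*)\bigl(1-f_{n+1}^*(x\rmI-F_n^{}F_n^*)^{-1}f_{n+1}^{}\bigr)$ for $x$ off the spectrum of $F_n^{}F_n^*$, and then diagonalize to get the partial-fraction identity $p_{n+1}(x)/p_n(x)=1-\sum_{m=1}^{M}\abs{\ip{f_{n+1}}{u_m}}^2/(x-\lambda_{n;m})$. The paper instead works on the Gram-matrix side: it appends $f_{n+1}$ as a new column, computes $\det(x\rmI-F_{n+1}^*F_{n+1}^{})$ for the bordered $(n+1)\times(n+1)$ Gram matrix by conjugating with an explicit unimodular-determinant matrix built from the SVD of $F_n$ (essentially a Schur-complement computation), and then must translate between the Gram characteristic polynomials and the frame-operator polynomials $p_n,p_{n+1}$, which forces the zero-padding bookkeeping and the two cases $n<M$ and $M\leq n$. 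From the partial-fraction identity onward the two arguments coincide: group the residues by distinct eigenvalues so they assemble into $\norm{P_{n;\lambda}f_{n+1}}^2$, multiply by $(x-\lambda)$, and let $x\to\lambda$; your remark about multiplicities (the order-$d$ zero of $p_n$ paired against the simple poles, possibly yielding a regular point when the projection vanishes) is exactly the right justification that the limit exists. What your route buys is brevity and the elimination of the case analysis and degree bookkeeping; what it costs is that you must supply a proof of the rank-one determinant lemma (which is standard, and can be proven by the very same block-conjugation trick the paper uses on the Gram side), whereas the paper's computation is self-contained and keeps the ``append a column'' viewpoint that matches how synthesis matrices are used in the rest of the paper.
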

\begin{proof}
For the sake of notational simplicity,  let $F_n=F$, $f_{n+1}=f$, $F_{n+1}=G$, $P_{n;\lambda}=P_\lambda$, $p_{n}(x)=p(x)$, $p_{n+1}(x)=q(x)$, and let $\lambda_{n;m}=\beta_m$ for all $m=1,\dotsc,M$.  We will also use $\rmI$ to denote the identity matrix, and its dimension will be apparent from context.  To obtain the result, we will express the characteristic polynomial $\tilde{q}(x)$ of the $(n+1)\times(n+1)$ Gram matrix $G^*G$ in terms of the characteristic polynomial $\tilde{p}(x)$ of the $n\times n$ Gram matrix $F^*F$.  
Written in terms of their standard matrix representations, we have $\smash{G=\begin{bmatrix}F&f\end{bmatrix}}$, and so
\begin{equation}
\label{equation.proof of necessary lengths of projections 1}
G^*G
=\begin{bmatrix}F^*\\f^*\end{bmatrix}\begin{bmatrix}F&f\end{bmatrix}
=\begin{bmatrix}F^*F&F^*f\\f^*F&\norm{f}^2\end{bmatrix}.
\end{equation}
To compute the determinant of $x\rmI-G^*G$, it is helpful to compute the singular value decomposition $F=U\Sigma V^*$, and note that for any $x$ not in the diagonal of $\Sigma^*\Sigma$, the following matrix $W$ has unimodular determinant:
\begin{equation}
\label{equation.proof of necessary lengths of projections 2}
W
:=\begin{bmatrix}V&0\\0&1\end{bmatrix}\begin{bmatrix}\rmI&(x\rmI-\Sigma^*\Sigma)^{-1}V^*F^*f\\0&1\end{bmatrix}
=\begin{bmatrix}V&V(x\rmI-\Sigma^*\Sigma)^{-1}V^*F^*f\\0&1\end{bmatrix}.
\end{equation}
Subtracting~\eqref{equation.proof of necessary lengths of projections 1} from $x\rmI$ and conjugating  by \eqref{equation.proof of necessary lengths of projections 2} yields
\begin{align}
\nonumber
W^*(x\rmI-G^*G)W
&=\begin{bmatrix}V^*&0\\(V(x\rmI-\Sigma^*\Sigma)^{-1}V^*F^*f)^*&1\end{bmatrix}\begin{bmatrix}x\rmI-F^*F&-F^*f\\-f^*F&x-\|f\|^2\end{bmatrix}\begin{bmatrix}V&V(x\rmI-\Sigma^*\Sigma)^{-1}V^*F^*f\\0&1\end{bmatrix}\\
\label{equation.proof of necessary lengths of projections 3}
&=\begin{bmatrix}V^*&0\\f^*FV(x\rmI-\Sigma^*\Sigma)^{-1}V^*&1\end{bmatrix}\begin{bmatrix}(x\rmI-F^*F)V&(x\rmI-F^*F)V(x\rmI-\Sigma^*\Sigma)^{-1}V^*F^*f-F^*f\\-f^*FV&x-\|f\|^2-f^*FV(x\rmI-\Sigma^*\Sigma)^{-1}V^*F^*f\end{bmatrix}.
\end{align}
Since $F^*F=V\Sigma^*\Sigma V^*$ then $x\rmI-F^*F=x\rmI-V\Sigma^*\Sigma V^*=V(x\rmI-\Sigma^*\Sigma)V^*$.  As such, 
\begin{equation}
\label{equation.proof of necessary lengths of projections 4}
(x\rmI-F^*F)V(x\rmI-\Sigma^*\Sigma)^{-1}V^*F^*f-F^*f
=V(x\rmI-\Sigma^*\Sigma)V^*V(x\rmI-\Sigma^*\Sigma)^{-1}V^*F^*f-F^*f \\
=F^*f-F^*f
=0.
\end{equation}
Substituting~\eqref{equation.proof of necessary lengths of projections 4} into~\eqref{equation.proof of necessary lengths of projections 3} and again noting $V^*(x\rmI-F^*F)V=x\rmI-\Sigma^*\Sigma$ gives
\begin{align}
\nonumber
W^*(x\rmI-G^*G)W
&=\begin{bmatrix}V^*&0\\f^*FV(x\rmI-\Sigma^*\Sigma)^{-1}V^*&1\end{bmatrix}\begin{bmatrix}(x\rmI-F^*F)V&0\\-f^*FV&x-\|f\|^2-f^*FV(x\rmI-\Sigma^*\Sigma)^{-1}V^*F^*f\end{bmatrix}\\
\nonumber
&=\begin{bmatrix}V^*(x\rmI-F^*F)V&0\\f^*FV(x\rmI-\Sigma^*\Sigma)^{-1}V^*(x\rmI-F^*F)V-f^*FV&x-\|f\|^2-f^*FV(x\rmI-\Sigma^*\Sigma)^{-1}V^*F^*f\end{bmatrix}\\
\label{equation.proof of necessary lengths of projections 5}
&=\begin{bmatrix}x\rmI-\Sigma^*\Sigma&0\\0&x-\|f\|^2-f^*FV(x\rmI-\Sigma^*\Sigma)^{-1}V^*F^*f\end{bmatrix}.
\end{align}
Since $W$ has unimodular determinant, \eqref{equation.proof of necessary lengths of projections 5} implies
\begin{equation}
\label{equation.proof of necessary lengths of projections 6}
\tilde{q}(x)
:=\mathrm{det}(x\rmI-G^*G)\\
=\mathrm{det}\bigbracket{W^*(x\rmI-G^*G)W}\\
=\mathrm{det}(x\rmI-\Sigma^*\Sigma)(x-\|f\|^2-f^*FV(x\rmI-\Sigma^*\Sigma)^{-1}V^*F^*f).
\end{equation}
To simplify \eqref{equation.proof of necessary lengths of projections 6}, note that since $V$ is unitary,
\begin{equation}
\label{equation.proof of necessary lengths of projections 7}
\tilde{p}(x)
:=\mathrm{det}(x\rmI-F^*F)
=\mathrm{det}\bigbracket{V^*(x\rmI-F^*F)V}
=\det(x\rmI-\Sigma^*\Sigma).
\end{equation}
Moreover, letting $(\Sigma^*\Sigma)(n',n')$ denote the $n'$th diagonal entry of $\Sigma^*\Sigma$ yields
\begin{equation}
\label{equation.proof of necessary lengths of projections 8}
f^*FV(x\rmI-\Sigma^*\Sigma)^{-1}V^*F^*f
=(V^*F^*f)^*(x\rmI-\Sigma^*\Sigma)^{-1}(V^*F^*f)
=\sum_{n'=1}^{n}\frac{\abs{(V^*F^*f)(n')}^2}{x-(\Sigma^*\Sigma)(n',n')}.
\end{equation}
Substituting~\eqref{equation.proof of necessary lengths of projections 7} and~\eqref{equation.proof of necessary lengths of projections 8} into~\eqref{equation.proof of necessary lengths of projections 6} gives
\begin{equation}
\label{equation.proof of necessary lengths of projections 9}
\tilde{q}(x)
=\tilde{p}(x)\,\biggparen{x-\|f\|^2-\sum_{n'=1}^{n}\frac{\abs{(V^*F^*f)(n')}^2}{x-(\Sigma^*\Sigma)(n',n')}}\,.\\
\end{equation}
To continue simplifying \eqref{equation.proof of necessary lengths of projections 9}, let $\delta_{n'}$ denote the $n'$th standard basis element.  Then $V^*F^*=\Sigma^*U^*$ implies that for any $n'=1,\dotsc,n$,
\begin{equation}
\label{equation.proof of necessary lengths of projections 10}
(V^*F^*f)(n')
=\ip{V^*F^*f}{\delta_{n'}}
=\ip{\Sigma^*U^*f}{\delta_{n'}}
=\ip{f}{U\Sigma\delta_{n'}}
=\left\{\begin{array}{cl}\sigma_{n'}\ip{f}{u_{n'}},&n'\leq M,\\0,&n'>M,\end{array}\right.
\end{equation}
where $\set{\sigma_{n'}}_{n'=1}^{\min\set{M,n}}$ are the singular values of $F$.  Since $(\Sigma^*\Sigma)(n',n')=\sigma_{n'}^2$ for any $n'=1,\dotsc,\min\set{M,n}$, 
\eqref{equation.proof of necessary lengths of projections 10} implies
\begin{equation}
\label{equation.proof of necessary lengths of projections 11}
\sum_{n'=1}^{n}\frac{\abs{(V^*F^*f)(n')}^2}{x-(\Sigma^*\Sigma)(n',n')}
=\sum_{n'=1}^{\min\set{M,n}}\frac{\sigma_{n'}^2\abs{\ip{f}{u_{n'}}}^2}{x-(\Sigma^*\Sigma)(n',n')}
=\sum_{n'=1}^{\min\set{M,n}}\frac{\sigma_{n'}^2}{x-\sigma_{n'}^2}\abs{\ip{f}{u_{n'}}}^2.
\end{equation}
Making the change of variables $m=n'$ in~\eqref{equation.proof of necessary lengths of projections 11} and substituting the result into~\eqref{equation.proof of necessary lengths of projections 9} gives
\begin{equation}
\label{equation.proof of necessary lengths of projections 12}
\tilde{q}(x)
=\tilde{p}(x)\,\biggparen{x-\|f\|^2-\sum_{m=1}^{\min\set{M,n}}\frac{\sigma_m^2}{x-\sigma_m^2}\abs{\ip{f}{u_m}}^2}\qquad \forall x\neq \sigma_1^2,\dotsc,\sigma_{\min\set{M,n}}^2,0.
\end{equation}
Here, the restriction that \smash{$x\neq \sigma_1^2,\dotsc,\sigma_{\min\set{M,n}}^2,0$} follows from the previously stated assumption that $x$ is not equal to any diagonal entry of $\Sigma^*\Sigma$; the set of these entries is $\set{\sigma_{n'}^2}_{n'=1}^{n}$ if $M\geq n$ and is $\set{\sigma_{n'}^2}_{n'=1}^{M}\cup\set{0}_{n'=1}^{n}$ if $M<n$.  Now recall that $p(x)$ and $q(x)$ are the $M$th degree characteristic polynomials of $FF^*$ and $GG^*$, respectively, while $\tilde{p}(x)$ is the $n$th degree characteristic polynomial of $F^*F$ and $\tilde{q}(x)$ is the $(n+1)$st degree characteristic polynomial of $G^*G$.  We now consider these facts along with \eqref{equation.proof of necessary lengths of projections 12} in two distinct cases: $n<M$ and $M\leq n$.  In the case where $n<M$, we have that $p(x)=x^{M-n}\tilde{p}(x)$ and $q(x)=x^{M-n-1}\tilde{q}(x)$.  Moreover, in this case the eigenvalues $\set{\beta_m}_{m=1}^{M}$ of $FF^*=U\Sigma\Sigma^*U^*$ are given by $\beta_m=\sigma_m^2$ for all $m=1,\dotsc,n$ and $\beta_m=0$ for all $m=n+1,\dotsc,M$, implying~\eqref{equation.proof of necessary lengths of projections 12} becomes
\begin{align}
\nonumber
\frac{q(x)}{x^{M-n-1}}
&=\frac{p(x)}{x^{M-n}}\,\biggparen{x-\|f\|^2-\sum_{m=1}^{n}\frac{\beta_m}{x-\beta_m}\abs{\ip{f}{u_m}}^2}\\
\label{equation.proof of necessary lengths of projections 13}
&=\frac{p(x)}{x^{M-n}}\,\biggparen{x-\|f\|^2-\sum_{m=1}^{M}\frac{\beta_m}{x-\beta_m}\abs{\ip{f}{u_m}}^2}\qquad \forall x\neq \beta_1,\dotsc,\beta_M,0.
\end{align}
In the remaining case where $M\leq n$, we have $\tilde{p}(x)=x^{n-M}p(x)$, $\tilde{q}(x)=x^{n+1-M}q(x)$ and $\beta_m=\sigma_m^2$ for all $m=1,\dotsc,M$, implying~\eqref{equation.proof of necessary lengths of projections 12} becomes
\begin{equation}
\label{equation.proof of necessary lengths of projections 14}
x^{n+1-M}q(x)
=x^{n-M}p(x)\,\biggparen{x-\|f\|^2-\sum_{m=1}^{M}\frac{\beta_m}{x-\beta_m}\abs{\ip{f}{u_m}}^2}\qquad \forall x\neq \beta_1,\dotsc,\beta_M,0.
\end{equation}
We now note that~\eqref{equation.proof of necessary lengths of projections 13} and~\eqref{equation.proof of necessary lengths of projections 14} are equivalent.   That is, regardless of the relationship between $M$ and $n$, we have
\begin{equation*}
\frac{q(x)}{p(x)}
=\frac1{x}\,\biggparen{x-\|f\|^2-\sum_{m=1}^{M}\frac{\beta_m}{x-\beta_m}\abs{\ip{f}{u_m}}^2}\qquad \forall x\neq \beta_1,\dotsc,\beta_M,0.
\end{equation*}
Writing $\displaystyle\norm{f}^2=\sum_{m=1}^{M}\abs{\ip{f}{u_m}}^2$ and then grouping the eigenvalues $\Lambda=\set{\beta_m}_{m=1}^{M}$ according to multiplicity gives
\begin{equation*}
\frac{q(x)}{p(x)}
=\frac1x\,\biggparen{x-\sum_{m=1}^{M}\abs{\ip{f}{u_m}}^2-\sum_{m=1}^{M}\frac{\beta_m}{x-\beta_m}\abs{\ip{f}{u_m}}^2}
=1-\sum_{m=1}^{M}\frac{\abs{\ip{f}{u_m}}^2}{x-\beta_m}
=1-\sum_{\lambda'\in\Lambda}\frac{\norm{P_{\lambda'}f}^2}{x-\lambda'}\qquad \forall x\notin\Lambda\cup\set{0}.
\end{equation*}
As such, for any $\lambda\in\Lambda$,
\begin{equation*}
\lim_{x\rightarrow\lambda}(x-\lambda)\frac{q(x)}{p(x)}
=\lim_{x\rightarrow\lambda}(x-\lambda)\biggparen{1-\sum_{\lambda'\in\Lambda}\frac{\norm{P_{\lambda'}f}^2}{x-\lambda'}}
=\lim_{x\rightarrow\lambda}\biggbracket{(x-\lambda)-\norm{P_{\lambda}f}^2-\sum_{\lambda'\neq\lambda}\norm{P_{\lambda'}f}^2\frac{x-\lambda}{x-\lambda'}}
=-\norm{P_{\lambda}f}^2
\end{equation*}
yielding our claim.
\end{proof}

Though technical, the proofs of the next two lemmas are nonetheless elementary, depending only on basic algebra and calculus.  As such, these proofs are given in the appendix.
\begin{lemma}
\label{lemma.interlacing and nonpositive limits}
If $\set{\beta_m}_{m=1}^{M}$ and $\set{\gamma_m}_{m=1}^{M}$ are real and nonincreasing, then $\set{\beta_m}_{m=1}^{M}\sqsubseteq\set{\gamma_m}_{m=1}^{M}$ if and only if
\begin{equation*}
\lim_{x\rightarrow\beta_m}(x-\beta_m)\frac{q(x)}{p(x)}\leq0\qquad\forall m=1,\dotsc,M,
\end{equation*}
where $\displaystyle p(x)=\prod_{m=1}^{M}(x-\beta_m)$ and $\displaystyle q(x)=\prod_{m=1}^{M}(x-\gamma_m)$.
\end{lemma}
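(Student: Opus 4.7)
The plan is to analyze the rational function $q(x)/p(x)$ by examining its behavior at each of its poles, which lie among the $\beta_m$. The limit $\lim_{x\to\beta_m}(x-\beta_m)q(x)/p(x)$ is a residue-like quantity: it is finite (and so can possibly be $\leq 0$) exactly when $\beta_m$ is a pole of $q/p$ of order at most one, equivalently when the multiplicity of $\beta_m$ as a root of $q$ is at least its multiplicity as a root of $p$ minus one. My first step is to verify that this multiplicity-matching is an automatic consequence of interlacing: if $\beta_{m_0}=\dotsb=\beta_{m_0+d-1}$ is a block of $d$ equal values, then~\eqref{equation.definition of interlacing} squeezes $\gamma_{m_0+1},\dotsc,\gamma_{m_0+d-1}$ to the same common value, producing $d-1$ matching factors $(x-\beta_{m_0})$ in $q$. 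Conversely, failure of this matching leaves $\beta_m$ as a pole of order $\geq 2$, forcing the limit to be unbounded and hence not $\leq 0$.

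With $\beta_m$ reduced to an at-most-simple pole of $q/p$, I would cancel the common factor $(x-\beta_m)^{d-1}$ from top and bottom, obtaining reduced polynomials $\tilde p$ and $\tilde q$ and the closed form $\tilde q(\beta_m)/\tilde p(\beta_m)$ for the limit (or $0$, in the case of over-matching). Because both sequences are sorted nonincreasingly, evaluating these polynomials at $\beta_m$ factors into a product of strictly positive terms multiplied by a sign $(-1)^{k}$ or $(-1)^{\ell}$, where $k$ counts the surviving indices $m'$ with $\beta_{m'}>\beta_m$ and $\ell$ counts the surviving indices $m'$ with $\gamma_{m'}>\beta_m$. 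The key algebraic identity to verify is that, under interlacing, $\ell-k$ is odd, ensuring $\tilde q(\beta_m)/\tilde p(\beta_m)\leq 0$.

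For the converse direction, I would argue by contrapositive. Assuming the multiplicity matching of the first step (else the limit at some $\beta_m$ is already unbounded), any remaining failure of~\eqref{equation.definition of interlacing} must take the form $\gamma_m<\beta_m$ for some $m$ or $\gamma_{m+1}>\beta_m$ for some $m\leq M-1$. A direct inspection at a carefully chosen $\beta_{m_0}$ then shows that $\ell$ changes parity relative to the interlacing case while $k$ remains unchanged, flipping the residue to strictly positive and violating the limit condition.

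The step I expect to be the main obstacle is the bookkeeping of ties: equalities among the $\beta$'s, among the $\gamma$'s, and coincidences between the two sequences all interact in the sign count, and the parity argument must remain robust to these degeneracies rather than relying on strict inequalities. The cleanest remedy, I expect, is to collapse repeated $\beta$-values into blocks and to run the sign count on the reduced rational function only after multiplicities have been handled, which effectively isolates the combinatorics from the algebra.
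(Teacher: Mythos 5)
Your forward direction is essentially the paper's argument: interlacing forces the multiplicity of $\beta_m$ in $q$ to be at least its multiplicity in $p$ minus one, the over-matched case gives limit $0$, and in the critical case the limit is a ratio whose sign is $(-1)^{\ell-k}$ with $k$ surviving $\beta$'s and $\ell$ surviving $\gamma$'s above $\beta_m$; under interlacing $\ell=k+1$, so this part goes through and matches the paper.

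The converse, however, has a genuine gap. The sign condition at a single $\beta_{m_0}$ only constrains the \emph{parity} of $\ell$ relative to $k$, not its value, so a violation of interlacing at index $m_0$ need not flip the sign at $\beta_{m_0}$ or at any index you can name locally. Concretely, take $M=3$, $\set{\beta_m}=\set{3,2,1}$ and $\set{\gamma_m}=\set{5,0.9,0.8}$: interlacing fails at $m=3$ (since $\gamma_3<\beta_3$), yet at $\beta_3$ we have $k=2$, $\ell=1$, so the residue there is still negative; the sign condition fails only at $\beta_2$, an index not singled out by your recipe. Your phrase ``$\ell$ changes parity relative to the interlacing case while $k$ remains unchanged'' compares the given configuration to a hypothetical interlacing one rather than deriving a contradiction inside the given one, and it is precisely the passage from ``parity correct at every $\beta_m$'' to ``counts exactly right at every $\beta_m$'' that carries the content of the converse. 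This can be repaired, but only by a global argument: for instance, note that $\ell_m:=\#\set{m':\gamma_{m'}>\beta_m}$ is nondecreasing in $m$, bounded by $M$, and forced by the sign conditions to have the parity of $m$, which pins down $\ell_m=m$ for all $m$ (in the case of distinct values); the paper instead proves the converse by induction on $M$, using the Intermediate Value Theorem to trap one root of $q$ in each gap $(\beta_m,\beta_{m-1})$ and one in $(\beta_1,\infty)$. In addition, when the two sequences share a common value the cancellation changes what $k$ and $\ell$ count, and recovering interlacing of the \emph{original} sequences from the reduced ones is not automatic: the paper needs a separate insertion argument (showing the canceled value re-enters in position $m_p$ or $m_p+1$), and your sketch's ``collapse ties into blocks'' remark does not yet supply an analogue of that step.
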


\begin{lemma}
\label{lemma.equal limits}
If $\set{\beta_m}_{m=1}^{M}$, $\set{\gamma_m}_{m=1}^{M}$, and $\set{\delta_m}_{m=1}^{M}$ are real and nonincreasing and
\begin{equation*}
\lim_{x\rightarrow\beta_m}(x-\beta_m)\frac{q(x)}{p(x)}=\lim_{x\rightarrow\beta_m}(x-\beta_m)\frac{r(x)}{p(x)}\qquad\forall m=1,\dotsc,M,
\end{equation*}
where $\displaystyle p(x)=\prod_{m=1}^{M}(x-\beta_m)$, $\displaystyle q(x)=\prod_{m=1}^{M}(x-\gamma_m)$ and $\displaystyle r(x)=\prod_{m=1}^{M}(x-\delta_m)$, then $q(x)=r(x)$.
\end{lemma}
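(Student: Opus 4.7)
The plan is to exploit the fact that $q$ and $r$ are both monic of degree $M$, so the difference $h(x) := q(x) - r(x)$ is a polynomial of degree at most $M-1$. The strategy is to show that $h$ must be divisible by $p$, which forces $h \equiv 0$ by a degree comparison.

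First, I would subtract the two hypothesized limits to obtain
\begin{equation*}
\lim_{x\to\beta_m}(x-\beta_m)\frac{h(x)}{p(x)} = 0 \qquad \forall m=1,\dotsc,M.
\end{equation*}
This is the single condition that drives everything. Next, I would reindex by distinct roots: let $\beta_{(1)} > \dots > \beta_{(d)}$ denote the distinct values occurring in $\{\beta_m\}_{m=1}^M$, with multiplicities $k_1,\dotsc,k_d$, so that $p(x) = \prod_{i=1}^d (x-\beta_{(i)})^{k_i}$ and $\sum_i k_i = M$.

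Fix an index $i$ and write $p(x) = (x-\beta_{(i)})^{k_i} p_0(x)$ with $p_0(\beta_{(i)}) \neq 0$. Then
\begin{equation*}
(x-\beta_{(i)})\frac{h(x)}{p(x)} = \frac{h(x)}{(x-\beta_{(i)})^{k_i-1} p_0(x)},
\end{equation*}
and the hypothesis says this tends to $0$ as $x\to\beta_{(i)}$. Since $p_0(\beta_{(i)}) \neq 0$, this is equivalent to $h(x)/(x-\beta_{(i)})^{k_i-1}$ tending to $0$, i.e., $h$ must vanish to order at least $k_i$ at $\beta_{(i)}$. Doing this at each distinct root yields
\begin{equation*}
\prod_{i=1}^d (x-\beta_{(i)})^{k_i} = p(x) \;\Bigm|\; h(x).
\end{equation*}
But $\deg h \leq M-1 < M = \deg p$, so $h$ must be the zero polynomial, i.e., $q(x) = r(x)$.

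The only subtle point—and the main thing to get right—is the multiplicity bookkeeping when $\{\beta_m\}$ has repeated entries: one must be careful that the limit hypothesis at a single $\beta_m$ actually forces $h$ to vanish to the full multiplicity $k_i$ of that value in $p$, not just to order $1$. The computation above handles this cleanly because the factor $(x-\beta_{(i)})^{k_i-1}$ in the denominator (which comes from $p$) demands a matching zero of order $k_i - 1$ in $h$ just to make the expression bounded, and the additional fact that the limit is $0$ (not merely finite) provides the extra order. The monotonicity hypothesis on $\{\gamma_m\}$ and $\{\delta_m\}$ is not actually needed for this proof; it merely reflects the setting in which the lemma is applied.
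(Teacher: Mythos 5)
Your proof is correct, and its skeleton matches the paper's: show that $q-r$ vanishes at each distinct value of $\set{\beta_m}_{m=1}^{M}$ to order at least that value's multiplicity in $p$, so that the monic difference has $M$ roots counting multiplicity while having degree at most $M-1$, and hence is identically zero. Where you differ is the middle step. The paper keeps $q$ and $r$ separate: it first notes that existence of each limit forces the multiplicity of $\beta_m$ in $q$ and in $r$ to be at least $L-1$, then applies l'H\^{o}pital's rule to identify each limit with $Lq^{(L-1)}(\beta_m)/p^{(L)}(\beta_m)$ (and likewise for $r$), so equality of limits yields $q^{(l)}(\beta_m)=r^{(l)}(\beta_m)$ for $l=0,\dotsc,L-1$. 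You instead subtract the two limits immediately (legitimate since both are assumed to exist finitely, the same implicit assumption the paper makes) and argue directly about the order of vanishing of $h=q-r$ against the factor $(x-\beta_{(i)})^{k_i-1}$; this avoids l'H\^{o}pital and all derivative bookkeeping, and is arguably cleaner. Your closing remark is also consistent with the paper: the monotonicity of $\set{\gamma_m}$ and $\set{\delta_m}$ plays no role in either argument, and your multiplicity analysis at repeated $\beta$'s is handled correctly (boundedness forces order $k_i-1$, and the limit being zero rather than merely finite supplies the extra order).
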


With Theorem~\ref{theorem.necessary lengths of projections} and Lemmas~\ref{lemma.eigensteps yield desired properties},~\ref{lemma.interlacing and nonpositive limits}~and~\ref{lemma.equal limits} in hand, we are ready to prove the main result of this section.
\begin{proof}[Proof of Theorem~\ref{theorem.necessity and sufficiency of eigensteps}]
($\Rightarrow$) Let $\set{\lambda_m}_{m=1}^{M}$ and $\set{\mu_n}_{n=1}^{N}$ be arbitrary nonnegative nonincreasing sequences, and let $F=\set{f_n}_{n=1}^{N}$ be any sequence of vectors such that the spectrum of $F^{}F^*$ is $\set{\lambda_m}_{m=1}^{M}$ and $\norm{f_n}^2=\mu_n$ for all $n=1,\dotsc,N$.   We claim that this particular $F$ can be constructed by following Steps~A and~B.

In particular, consider the sequence of sequences $\set{\set{\lambda_{n;m}}_{m=1}^{M}}_{n=0}^{N}$ defined by letting $\set{\lambda_{n;m}}_{m=1}^{M}$ be the spectrum of the frame operator $F_n^{}F_n^*$ of the sequence $F_n=\set{f_{n'}}_{n'=1}^{n}$ for all $n=1,\dotsc,N$ and letting $\lambda_{0;m}=0$ for all $m$.  We claim that $\set{\set{\lambda_{n;m}}_{m=1}^{M}}_{n=0}^{N}$ satisfies Definition~\ref{definition.eigensteps} and therefore is a valid sequence of eigensteps.  Note conditions (i) and (ii) of Definition~\ref{definition.eigensteps} are immediately satisfied.  To see that $\set{\set{\lambda_{n;m}}_{m=1}^{M}}_{n=0}^{N}$ satisfies (iii), consider the polynomials $p_n(x)$ defined by \eqref{equation.necessity and sufficiency of eigensteps 1} for all $n=1,\dotsc,N$.  In the special case where $n=1$, the desired property (iii) that $\set{0}_{m=1}^{M}\sqsubseteq\set{\lambda_{1;m}}_{m=1}^{M}$ follows from the fact that the spectrum $\set{\lambda_{1;m}}_{m=1}^{M}$ of the scaled rank-one projection $F_1^{}F_1^*=f_1^{}f_1^*$ is the value $\norm{f_1}^2=\mu_1$ along with $M-1$ repetitions of $0$, the eigenspaces being the span of $f_1$ and its orthogonal complement, respectively.  Meanwhile if $n=2,\dotsc,N$, Theorem~\ref{theorem.necessary lengths of projections} gives that
\begin{equation*}
\lim_{x\rightarrow\lambda_{n-1;m}}(x-\lambda_{n-1;m})\frac{p_n(x)}{p_{n-1}(x)}
=-\norm{P_{n-1;\lambda_{n-1;m}}f_n}^2
\leq0
\qquad \forall m=1,\dotsc,M,
\end{equation*}
implying by Lemma~\ref{lemma.interlacing and nonpositive limits} that $\set{\lambda_{n-1;m}}_{m=1}^{M}\sqsubseteq\set{\lambda_{n;m}}_{m=1}^{M}$ as claimed.  Finally, (iv) holds since for any $n=1,\dotsc,N$ we have
\begin{equation*}
\sum_{m=1}^{M}\lambda_{n;m}
=\Tr(F_n^{}F_n^*)
=\Tr(F_n^*F_n^{})
=\sum_{n'=1}^{n}\norm{f_{n'}}^2
=\sum_{n'=1}^{n}\mu_{n'}.
\end{equation*}

Having shown that these particular values of $\set{\set{\lambda_{n;m}}_{m=1}^{M}}_{n=0}^{N}$ can indeed be chosen in Step~A, we next show that our particular $F$ can be constructed according to Step~B.  As the method of Step~B is iterative, we use induction to prove that it can yield $F$.  Indeed, the only restriction that Step~B places on $f_1$ is that $\norm{f_1}^2=\mu_1$, something our particular $f_1$ satisfies by assumption.  Now assume that for any $n=1,\dotsc,N-1$ we have already correctly produced $\set{f_{n'}}_{n'=1}^{n}$ by following the method of Step~B; we show that we can produce the correct $f_{n+1}$ by continuing to follow Step~B.  To be clear, each iteration of Step~B does not produce a unique vector, but rather presents a family of $f_{n+1}$'s to choose from, and we show that our particular choice of $f_{n+1}$ lies in this family.  Specifically, our choice of $f_{n+1}$ must satisfy~\eqref{equation.necessity and sufficiency of eigensteps 2} for any choice of $\lambda\in\set{\lambda_{n;m}}_{m=1}^{M}$; the fact that it indeed does so follows immediately from Theorem~\ref{theorem.necessary lengths of projections}.  To summarize, we have shown that by making appropriate choices, we can indeed produce our particular $F$ by following Steps~A and~B, concluding this direction of the proof.

($\Leftarrow$)  Now assume that a sequence of vectors $F=\set{f_n}_{n=1}^{N}$ has been produced according to Steps~A and~B.  To be precise, letting $\set{\set{\lambda_{n;m}}_{m=1}^{M}}_{n=0}^{N}$ be the sequence of eigensteps chosen in Step~A, we claim that any $F=\set{f_n}_{n=1}^{N}$ constructed according to Step~B has the property that the spectrum of the frame operator $F_n^{}F_n^*$ of $F_n=\set{f_{n'}}_{n'=1}^{n}$ is $\set{\lambda_{n;m}}_{m=1}^{M}$ for all $n=1,\dotsc,N$.  Note that by Lemma~\ref{lemma.eigensteps yield desired properties}, proving this claim will yield our stated result that the spectrum of $FF^*$ is $\set{\lambda_m}_{m=1}^{M}$ and that $\norm{f_n}^2=\mu_n$ for all $n=1,\dotsc,N$.  As the method of Step~B is iterative, we prove this claim by induction.  Step~B begins by taking any $f_1$ such that $\norm{f_1}^2=\mu_1$.  As noted above in the proof of the other direction, the spectrum of $F_1^{}F_1^*=f_1^{}f_1^*$ is the value $\mu_1$ along with $M-1$ repetitions of $0$.  As claimed, these values match those of $\set{\lambda_{1;m}}_{m=1}^{M}$; to see this, note that Definition~\ref{definition.eigensteps}(i) and (iii) give $\set{0}_{m=1}^{M}=\set{\lambda_{0;m}}_{m=1}^{M}\sqsubseteq\set{\lambda_{1;m}}_{m=1}^{M}$ and so $\lambda_{1;m}=0$ for all $m=2,\dotsc,M$, at which point Definition~\ref{definition.eigensteps}(iv) implies $\lambda_{1,1}=\mu_1$.

Now assume that for any $n=1,\dotsc,N-1$, the Step~B process has already produced $F_n=\set{f_{n'}}_{n'=1}^{n}$ such that the spectrum of $F_n^{}F_n^*$ is $\set{\lambda_{n;m}}_{m=1}^{M}$.  We show that by following Step~B, we produce an $f_{n+1}$ such that $F_{n+1}=\set{f_{n'}}_{n'=1}^{n+1}$ has the property that $\set{\lambda_{n+1;m}}_{m=1}^{M}$ is the spectrum of $F_{n+1}^{}F_{n+1}^*$.  To do this, consider the polynomials $p_n(x)$ and $p_{n+1}(x)$ defined by~\eqref{equation.necessity and sufficiency of eigensteps 1} and pick any $f_{n+1}$ that satisfies~\eqref{equation.necessity and sufficiency of eigensteps 2}, namely
\begin{equation}
\label{equation.proof of necessity and sufficiency of eigensteps 1}
\lim_{x\rightarrow\lambda_{n;m}}(x-\lambda_{n;m})\frac{p_{n+1}(x)}{p_n(x)}
=-\norm{P_{n;\lambda_{n;m}}f_{n+1}}^2\qquad\forall m=1,\dotsc,M.
\end{equation}
Letting $\set{\hat{\lambda}_{n+1;m}}_{m=1}^{M}$ denote the spectrum of $F_{n+1}^{}F_{n+1}^*$, our goal is to show that $\set{\hat{\lambda}_{n+1;m}}_{m=1}^{M}=\set{\lambda_{n+1;m}}_{m=1}^{M}$.  Equivalently, our goal is to show that $p_{n+1}(x)=\hat{p}_{n+1}(x)$ where $\hat{p}_{n+1}(x)$ is the polynomial
\begin{equation*}
\hat{p}_{n+1}(x):=\prod_{m=1}^{M}(x-\hat{\lambda}_{n+1;m}).
\end{equation*}
Since $p_n(x)$ and $\hat{p}_{n+1}(x)$ are the characteristic polynomials of $F_n^{}F_n^*$ and $F_{n+1}^{}F_{n+1}^*$, respectively, Theorem~\ref{theorem.necessary lengths of projections} gives:
\begin{equation}
\label{equation.proof of necessity and sufficiency of eigensteps 2}
\lim_{x\rightarrow\lambda_{n;m}}(x-\lambda_{n;m})\frac{\hat{p}_{n+1}(x)}{p_n(x)}
=-\norm{P_{n;\lambda_{n;m}}f_{n+1}}^2\qquad\forall m=1,\dotsc,M.
\end{equation}
Comparing~\eqref{equation.proof of necessity and sufficiency of eigensteps 1} and~\eqref{equation.proof of necessity and sufficiency of eigensteps 2} gives:
\begin{equation*}
\lim_{x\rightarrow\lambda_{n;m}}(x-\lambda_{n;m})\frac{p_{n+1}(x)}{p_n(x)}
=\lim_{x\rightarrow\lambda_{n;m}}(x-\lambda_{n;m})\frac{\hat{p}_{n+1}(x)}{p_n(x)}\qquad\forall m=1,\dotsc,M,
\end{equation*}
implying by Lemma~\ref{lemma.equal limits} that $p_{n+1}(x)=\hat{p}_{n+1}(x)$, as desired.
\end{proof}


\section{Constructing frame elements from eigensteps}

As discussed in the previous section, Theorem~\ref{theorem.necessity and sufficiency of eigensteps} provides a two-step process for constructing any and all sequences of vectors $F=\set{f_n}_{n=1}^{N}$ in $\bbH_M$ whose frame operator possesses a given spectrum $\set{\lambda_m}_{m=1}^{M}$ and whose vectors have given lengths $\set{\mu_n}_{n=1}^{N}$.  In Step~A, we choose a sequence of eigensteps $\set{\set{\lambda_{n;m}}_{m=1}^{M}}_{n=0}^{N}$.  In the end, the $n$th sequence $\set{\lambda_{n;m}}_{m=1}^{M}$ will become the spectrum of the $n$th partial frame operator $F_n^{}F_n^*$, where $F_n=\set{f_{n'}}_{n'=1}^n$.  Due to the complexity of Definition~\ref{definition.eigensteps}, it is not obvious how to sequentially pick such eigensteps.   Looking at simple examples of this problem, such as the one discussed in Example~\ref{example.5 in 3} below, it appears as though the proof techniques needed to address these questions are completely different from those used throughout this paper.  As such, we leave the problem of parametrizing the eigensteps themselves for future work.  In this section, we thus focus on refining Step~B.

To be precise, the purpose of Step~B is to explicitly construct any and all sequences of vectors whose partial-frame-operator spectra match the eigensteps chosen in Step~A.  The problem with Step~B of Theorem~\ref{theorem.necessity and sufficiency of eigensteps}  is that it is not very explicit.  Indeed for every $n=1,\dotsc,N-1$, in order to construct $f_{n+1}$ we must first compute an orthonormal eigenbasis for $F_n^{}F_n^{*}$.  This problem is readily doable since the eigenvalues $\set{\lambda_{n;m}}_{m=1}^{M}$ of $F_n^{}F_n^*$ are already known.  It is nevertheless a tedious and inelegant process to do by hand, requiring us to, for example, compute QR-factorizations of $\lambda_{n;m}\rmI-F_n^{}F_n^*$ for each $m=1,\dotsc,M$.  This section is devoted to the following result, which is a version of Theorem~\ref{theorem.necessity and sufficiency of eigensteps} equipped with a more explicit Step~B; though technical, this new and improved Step~B is still simple enough to be performed by hand, a fact which will hopefully permit its future application to both theoretical and numerical problems.

\begin{theorem}
\label{theorem.explicit frame construction}
For any nonnegative nonincreasing sequences $\set{\lambda_m}_{m=1}^{M}$ and $\set{\mu_n}_{n=1}^{N}$, every sequence of vectors $F=\set{f_n}_{n=1}^{N}$ in $\bbH_M$ whose frame operator $FF^*$ has spectrum $\set{\lambda_m}_{m=1}^{M}$ and which satisfies $\norm{f_n}^2=\mu_n$ for all $n$ can be constructed by the following algorithm:
\begin{enumerate}
\renewcommand{\labelenumi}{\Alph{enumi}.}
\item
Pick eigensteps $\set{\set{\lambda_{n;m}}_{m=1}^{M}}_{n=0}^{N}$ as in Definition~\ref{definition.eigensteps}.\smallskip
\item
Let $U_1$ be any unitary matrix, $U_1=\{u_{1;m}\}_{m=1}^M$, and let $f_1=\sqrt{\mu_1} u_{1;1}$.  For each $n=1,\dots,N-1$:\smallskip
\begin{enumerate}
\renewcommand{\labelenumii}{\Alph{enumi}.\arabic{enumii}} 
\item Let $V_n$ be an $M \times M$ block-diagonal unitary matrix whose blocks correspond to the distinct values of $\{\lambda_{n;m}\}_{m=1}^M$ with the size of each block being the multiplicity of the corresponding eigenvalue.\smallskip
\item Identify those terms which are common to both $\{\lambda_{n;m}\}_{m=1}^M$ and $\{ \lambda_{n+1;m}\}_{m=1}^M$.  Specifically: \smallskip
\begin{itemize}
\item
Let $\calI_n\subseteq\{1,\dots,M\}$ consist of those indices $m$ such that $\lambda_{n;m}<\lambda_{n;m'}$ for all $m'<m$ and such that the multiplicity of $\lambda_{n;m}$ as a value in $\{\lambda_{n;m'}\}_{m'=1}^M$ exceeds its multiplicity as a value in $\{\lambda_{n+1;m'}\}_{m'=1}^M$.
\item 
Let $\calJ_n\subseteq\{1,\dots,M\}$ consist of those indices $m$ such that $\lambda_{n+1;m}<\lambda_{n+1;m'}$ for all $m'<m$ and such that the multiplicity of $\lambda_{n;m}$ as a value in $\{\lambda_{n+1;m'}\}_{m'=1}^M$ exceeds its multiplicity as a value in $\{\lambda_{n;m'}\}_{m'=1}^M$.
\end{itemize}
The sets $\calI_n$ and $\calJ_n$ have equal cardinality, which we denote $R_n$.  Next:
\begin{itemize}
\item Let $\pi_{\calI_n}$ be the unique permutation on $\{1,\dots,M\}$ that is increasing on both $\calI_n$ and $\calI_n^\rmc$ and such that $\pi_{\calI_n}(m)\in\{1,\dots,R_n\}$ for all $m\in\calI_n$.  Let $\Pi_{\calI_n}$ be the associated permutation matrix $\Pi_{\calI_n}\delta_{m}=\delta_{\pi_{\calI_n}(m)}$.
\item Let $\pi_{\calJ_n}$ be the unique permutation on $\{1,\dots,M\}$ that is increasing on both $\calJ_n$ and $\calJ_n^\rmc$ and such that $\pi_{\calJ_n}(m)\in\{1,\dots,R_n\}$ for all $m\in\calJ_n$.  Let $\Pi_{\calJ_n}$ be the associated permutation matrix $\Pi_{\calJ_n}\delta_{m}=\delta_{\pi_{\calJ_n}(m)}$.
\end{itemize}
\item Let $v_n$, $w_n$ be the $R_n\times 1$ vectors whose entries are
\begin{equation*} 
v_n(\pi_{\calI_n}(m))=\begin{bmatrix}-\,\frac{\displaystyle\prod_{m''\in\calJ_n} (\lambda_{n;m}-\lambda_{n+1,m''})}{\displaystyle\prod_{\substack{m''\in\calI_n\\m''\neq m}} (\lambda_{n;m}-\lambda_{n;m''})} \end{bmatrix}^{\frac12}\!\!,\,
w_n(\pi_{\calJ_n}(m'))=\begin{bmatrix} \frac{\displaystyle\prod_{m''\in\calI_n} (\lambda_{n+1;m'}-\lambda_{n,m''})}{\displaystyle\prod_{\substack{m''\in\calJ_n\\m''\neq m'}} (\lambda_{n+1;m'}-\lambda_{n+1;m''})}\end{bmatrix}^{\frac12}\quad \forall m\in\calI_n, m'\in\calJ_n.
\end{equation*}
\item$f_{n+1}=U_nV_n\Pi_{\calI_n}^{\rmT}\begin{bmatrix}v_n\\0\end{bmatrix}$, where the $M\times 1$ vector $\begin{bmatrix}v_n\\0\end{bmatrix}$ is $v_n$ padded with $M-R_n$ zeros.
\item $U_{n+1}=U_nV_n\Pi_{\calI_n}^{\rmT}\begin{bmatrix}W_n&0\\0&\rmI\end{bmatrix}\Pi_{\calJ_n}$ where $W_n$ is the $R_n\times R_n$ matrix whose entries are:
\begin{equation*}
W_n(\pi_{\calI_n}(m),\pi_{\calJ_n}(m')) = \frac1{\lambda_{n+1;m'}-\lambda_{n;m}} v_n(\pi_{\calI_n}(m))w_n(\pi_{\calJ_n}(m')).
\end{equation*}
\end{enumerate}
\end{enumerate}
Conversely, any $F$ constructed by this process has $\set{\lambda_m}_{m=1}^{M}$ as the spectrum of $FF^*$ and $\norm{f_n}^2=\mu_n$ for all $n$.

\noindent
Moreover, for any $F$ constructed in this manner and any $n=1,\dotsc,N$, the spectrum of the frame operator $F_n^{}F_n^*$ arising from the partial sequence $F_n=\set{f_{n'}}_{n'=1}^{n}$ is $\set{\lambda_{n;m}}_{m=1}^{M}$, and the columns of $U_n$ form a corresponding orthonormal eigenbasis for $F_n^{}F_n^*$.
\end{theorem}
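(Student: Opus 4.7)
My plan is to proceed by induction on $n$, invoking Theorem~\ref{theorem.necessity and sufficiency of eigensteps} as a black box for the spectrum statement while strengthening the induction hypothesis to the invariant asserted in the final sentence of the theorem: that the columns of $U_n$ form an orthonormal eigenbasis of $F_n^{}F_n^*$ with eigenvalues $\{\lambda_{n;m}\}_{m=1}^M$ listed in order.  The base case $n=1$ is immediate, since $F_1^{}F_1^*=\mu_1 u_{1;1}^{}u_{1;1}^*$ has spectrum $\mu_1$ followed by $M-1$ zeros, matching $\{\lambda_{1;m}\}_{m=1}^M$, and the columns of any unitary $U_1$ beginning with $u_{1;1}$ diagonalize this rank-one operator in the prescribed order.

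For the inductive step I first decode $f_{n+1}$.  Because $V_n$ is block-diagonal with blocks aligned to the eigenspaces of $F_n^{}F_n^*$ labeled by the ordered columns of $U_n$, each column of $U_nV_n$ stays in a single eigenspace, and tracing the permutation gives
\[
f_{n+1}=\sum_{m\in\calI_n}v_n(\pi_{\calI_n}(m))\,(U_nV_n)\delta_m.
\]
Since distinct $m\in\calI_n$ index distinct eigenvalues, it follows that $\|P_{n;\lambda_{n;m}}f_{n+1}\|^2=|v_n(\pi_{\calI_n}(m))|^2$ for $m\in\calI_n$ and vanishes on every other eigenspace of $F_n^{}F_n^*$.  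To invoke Theorem~\ref{theorem.necessity and sufficiency of eigensteps} I then match these projection norms to the residues $-\lim_{x\to\lambda}(x-\lambda)p_{n+1}(x)/p_n(x)$: membership of $m$ in $\calI_n$ forces the multiplicity of $\lambda_{n;m}$ to drop by exactly one between $p_n$ and $p_{n+1}$ (by interlacing it can drop by at most one), so the pole is simple, and after cancelling all linear factors shared with equal multiplicity between $p_n$ and $p_{n+1}$ one is left with precisely the product expression defining $v_n(\pi_{\calI_n}(m))^2$.  For persistent eigenvalues the residue vanishes trivially.  Theorem~\ref{theorem.necessity and sufficiency of eigensteps} then yields that $F_{n+1}^{}F_{n+1}^*$ has spectrum $\{\lambda_{n+1;m}\}_{m=1}^M$, and Lemma~\ref{lemma.eigensteps yield desired properties} then gives $\|f_{n+1}\|^2=\mu_{n+1}$.

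The main obstacle is to show that the columns of
\[
U_{n+1}=U_nV_n\Pi_{\calI_n}^{\rmT}\begin{bmatrix}W_n&0\\0&\rmI\end{bmatrix}\Pi_{\calJ_n}
\]
form an orthonormal eigenbasis of $F_{n+1}^{}F_{n+1}^*=F_n^{}F_n^*+f_{n+1}^{}f_{n+1}^*$ in the order $\{\lambda_{n+1;m}\}_{m=1}^M$.  I will split the columns via $\Pi_{\calJ_n}$.  The columns indexed by $m\notin\calJ_n$ come from the identity block and equal $U_nV_n\delta_k$ for some $k\notin\calI_n$; the combinatorics of the two permutations $\pi_{\calI_n},\pi_{\calJ_n}$ (each increasing on both parts, with $\calJ_n^\rmc$ and $\calI_n^\rmc$ indexing the same persistent eigenvalues in the same order) force $\lambda_{n+1;m}=\lambda_{n;k}$, and these vectors are orthogonal to every summand of $f_{n+1}$ by unitarity of $U_nV_n$, so they remain eigenvectors under the rank-one update with the same eigenvalue.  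For the columns indexed by $m\in\calJ_n$, the prescribed entry
$W_n(\pi_{\calI_n}(m''),\pi_{\calJ_n}(m))=v_n(\pi_{\calI_n}(m''))w_n(\pi_{\calJ_n}(m))/(\lambda_{n+1;m}-\lambda_{n;m''})$
I will recognize as the classical secular-equation expansion
\[
\hat u_m\;\propto\;\sum_{m''\in\calI_n}\frac{v_n(\pi_{\calI_n}(m''))}{\lambda_{n+1;m}-\lambda_{n;m''}}\,(U_nV_n)\delta_{m''},
\]
with $w_n(\pi_{\calJ_n}(m))$ supplying the normalization constant.

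The hardest step is therefore a bundle of rational-function identities I plan to isolate as a single auxiliary lemma: the secular equation $\sum_{m''\in\calI_n}|v_n(\pi_{\calI_n}(m''))|^2/(\lambda_{n+1;m}-\lambda_{n;m''})=1$ for each $m\in\calJ_n$ (which certifies that the candidate above is an eigenvector with eigenvalue $\lambda_{n+1;m}$), together with the normalization/cross-orthogonality relations
$\sum_{m\in\calJ_n}|w_n(\pi_{\calJ_n}(m))|^2\,[(\lambda_{n+1;m}-\lambda_{n;m'})(\lambda_{n+1;m}-\lambda_{n;\tilde m'})]^{-1}=\delta_{m',\tilde m'}\,|v_n(\pi_{\calI_n}(m'))|^{-2}$
for $m',\tilde m'\in\calI_n$ (equivalent to $W_n^{}W_n^*=\rmI$).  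Each of these reduces to a partial-fraction expansion of $\prod_{m\in\calJ_n}(x-\lambda_{n+1;m})/\prod_{m'\in\calI_n}(x-\lambda_{n;m'})$ compared directly with the explicit products in the definitions of $v_n$ and $w_n$.  Once this rational-function lemma is in hand the induction closes, and the converse direction follows immediately from the observation that $f_{n+1}$ was built to satisfy Step~B of Theorem~\ref{theorem.necessity and sufficiency of eigensteps}.
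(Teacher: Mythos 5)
Your induction for the ``converse'' half---that anything produced by the algorithm has partial frame operators with spectra $\set{\lambda_{n;m}}_{m=1}^{M}$ and eigenbases $U_n$---is essentially sound, and is a mild reorganization of the paper's argument: where the paper verifies directly that $U_{n+1}$ diagonalizes $F_{n+1}^{}F_{n+1}^*$ (so the spectrum claim comes for free), you get the spectrum by checking the residue condition of Theorem~\ref{theorem.necessity and sufficiency of eigensteps} and reserve the algebra for the eigenbasis claim; your ``bundle of rational-function identities'' (the secular equation and the row/column orthonormality of $W_n$) is exactly the Lagrange-interpolation/differentiation computation in the paper's proof, repackaged as partial-fraction residue sums, and it does go through. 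Two small items you should still record there: that $\calI_n$ and $\calJ_n$ have equal cardinality with the persistent eigenvalues matching up in order (this is what makes the identity-block columns of $U_{n+1}$ carry the correct eigenvalues $\lambda_{n+1;m}$ for $m\in\calJ_n^\rmc$), and that the stripped sequences $\set{\beta_r}$, $\set{\gamma_r}$ are pairwise distinct and strictly interlacing (via Lemma~\ref{lemma.interlacing and nonpositive limits}), which is what makes the radicands defining $v_n,w_n$ positive and the entries of $W_n$ finite.

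The genuine gap is the other direction, which is the headline claim: \emph{every} $F$ with the prescribed spectrum and lengths is an output of the algorithm. Your closing sentence derives this ``immediately'' from the observation that the algorithm's $f_{n+1}$ satisfies Step~B of Theorem~\ref{theorem.necessity and sufficiency of eigensteps}, but that observation gives the inclusion the wrong way around: it shows that algorithm outputs are admissible frames, not that every admissible frame is an algorithm output. Theorem~\ref{theorem.necessity and sufficiency of eigensteps} permits \emph{any} $f_{n+1}$ whose projections onto the eigenspaces of $F_n^{}F_n^*$ have the prescribed norms, whereas your Step~B produces only the specific vectors $U_nV_n\Pi_{\calI_n}^{\rmT}$ applied to $v_n$ padded with zeros; to finish you must show that the freedom in $U_1$ and in the block-diagonal matrices $V_n$ suffices to realize every such $f_{n+1}$, and hence the given $F$ itself rather than merely some frame with the same eigensteps. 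Concretely (this is the paper's argument): choose the eigensteps to be the actual spectra of the partial frame operators of the given $F$ (valid by the proof of Theorem~\ref{theorem.necessity and sufficiency of eigensteps}); then, assuming $F_n$ has been reproduced with eigenbasis $U_n$, Theorem~\ref{theorem.necessary lengths of projections} gives $\norm{P_{n;\lambda_{n;m}}f_{n+1}}^2=[v_n(\pi_{\calI_n}(m))]^2$ for $m\in\calI_n$ and $0$ otherwise; form a new orthonormal eigenbasis $\hat{U}_n$ that agrees with $U_n$ eigenspace-by-eigenspace but whose $m$th column, for each $m\in\calI_n$, is the unit vector $P_{n;\lambda_{n;m}}f_{n+1}/v_n(\pi_{\calI_n}(m))$, and set $V_n:=U_n^*\hat{U}_n$; one checks that $V_n$ is block-diagonal of the required shape and that this choice makes the algorithm output exactly $f_{n+1}$. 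Without this realization step, the ``every $F$ can be constructed'' half of the theorem is unproven.
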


Before proving Theorem~\ref{theorem.explicit frame construction}, we give an example of its implementation, with the hope of conveying the simplicity of the underlying idea, and better explaining the heavy notation used in the statement of the result.
\begin{example}
\label{example.5 in 3}
We now use Theorem~\ref{theorem.explicit frame construction} to construct UNTFs consisting of $5$ vectors in $\bbR^3$.  Here, $\lambda_1=\lambda_2=\lambda_3=\frac53$ and $\mu_1=\mu_2=\mu_3=\mu_4=\mu_5=1$.  By Step~A, our first task is to pick a sequence of eigensteps consistent with Definition~\ref{definition.eigensteps}, that is, pick $\set{\lambda_{1;1},\lambda_{1;2},\lambda_{1;3}}$, $\set{\lambda_{2;1},\lambda_{2;2},\lambda_{2;3}}$, $\set{\lambda_{3;1},\lambda_{3;2},\lambda_{3;3}}$ and $\set{\lambda_{4;1},\lambda_{4;2},\lambda_{4;3}}$ that satisfy the interlacing conditions:
\begin{equation}
\label{equation.5 in 3 example 1}
\set{0,0,0}
\sqsubseteq\set{\lambda_{1;1},\lambda_{1;2},\lambda_{1;3}}
\sqsubseteq\set{\lambda_{2;1},\lambda_{2;2},\lambda_{2;3}}
\sqsubseteq\set{\lambda_{3;1},\lambda_{3;2},\lambda_{3;3}}
\sqsubseteq\set{\lambda_{4;1},\lambda_{4;2},\lambda_{4;3}}
\sqsubseteq\set{\tfrac53,\tfrac53;\tfrac53},
\end{equation}
as well as the trace conditions:
\begin{equation}
\label{equation.5 in 3 example 2}
\lambda_{1;1}+\lambda_{1;2}+\lambda_{1;3}=1,\qquad
\lambda_{2;1}+\lambda_{2;2}+\lambda_{2;3}=2,\qquad
\lambda_{3;1}+\lambda_{3;2}+\lambda_{3;3}=3,\qquad
\lambda_{4;1}+\lambda_{4;2}+\lambda_{4;3}=4.
\end{equation}
Writing these desired spectra in a table:
\begin{equation*} 
\begin{tabular}{p{1cm} p{1cm} p{1cm} p{1cm} p{1cm} p{1cm} l}
\ \,$n$&$0$&$1$&$2$&$3$&$4$&$5${\smallskip}\\ 
\hline\noalign{\smallskip}
$\lambda_{n;3}$&0 &? &? &? &? &$\frac{5}3${\smallskip}\\
$\lambda_{n;2}$&0 &? &? &? &? &$\frac{5}3${\smallskip}\\
$\lambda_{n;1}$&0 &? &? &? &? &$\frac{5}3$\\
\end{tabular}
\end{equation*}
the trace condition~\eqref{equation.5 in 3 example 2} means that the sum of the values in the $n$th column is $\sum_{n'=1}^n\mu_{n'}=n$, while the interlacing condition~\eqref{equation.5 in 3 example 1} means that any value $\lambda_{n;m}$ is at least the neighbor to the upper right $\lambda_{n+1;m+1}$ and no more than its neighbor to the right $\lambda_{n+1;m}$.  In particular, for $n=1$,  we necessarily have $0=\lambda_{0;2}\leq\lambda_{1;2}\leq\lambda_{0;1}=0$ and $0=\lambda_{0;3}\leq\lambda_{1;3}\leq\lambda_{0;2}=0$ implying that $\lambda_{1;2}=\lambda_{1;3}=0$.  Similarly, for $n=4$, interlacing requires that $\frac53=\lambda_{5;2}\leq\lambda_{4;1}\leq\lambda_{5;1}=\frac53$ and $\frac53=\lambda_{5;3}\leq\lambda_{4;2}\leq\lambda_{5;2}=\frac53$ implying that $\lambda_{4;1}=\lambda_{4;2}=\frac{5}3$.  That is, we necessarily have:
\begin{equation*} 
\begin{tabular}{p{1cm} p{1cm} p{1cm} p{1cm} p{1cm} p{1cm} l}
\ \,$n$&$0$&$1$&$2$&$3$&$4$&$5${\smallskip}\\ 
\hline\noalign{\smallskip}
$\lambda_{n;3}$&0 &0 &? &? &? &$\frac{5}3${\smallskip}\\
$\lambda_{n;2}$&0 &0 &? &? &$\frac{5}3$ &$\frac{5}3${\smallskip}\\
$\lambda_{n;1}$&0 &? &? &? &$\frac{5}3$ &$\frac{5}3$\\
\end{tabular}
\end{equation*}
Applying this same idea again for $n=2$ and $n=3$ gives $0=\lambda_{1;3}\leq\lambda_{2;3}\leq\lambda_{1;2}=0$ and $\frac53=\lambda_{4;2}\leq\lambda_{3;1}\leq\lambda_{4;1}=\frac53$, and so we also necessarily have that $\lambda_{2;3}=0$, and $\lambda_{3;1}=\frac{5}3$:
\begin{equation*} 
\begin{tabular}{p{1cm} p{1cm} p{1cm} p{1cm} p{1cm} p{1cm} l}
\ \,$n$&$0$&$1$&$2$&$3$&$4$&$5${\smallskip}\\ 
\hline\noalign{\smallskip}
$\lambda_{n;3}$&0 &0 &0 &? &? &$\frac{5}3${\smallskip}\\
$\lambda_{n;2}$&0 &0 &? &? &$\frac{5}3$ &$\frac{5}3${\smallskip}\\
$\lambda_{n;1}$&0 &? &? &$\frac{5}3$ &$\frac{5}3$ &$\frac{5}3$\\
\end{tabular}
\end{equation*}
Moreover, the trace condition~\eqref{equation.5 in 3 example 2} at $n=1$ gives $1=\lambda_{1;1}+\lambda_{1;2}+\lambda_{1;3}=\lambda_{1;1}+0+0$ and so $\lambda_{1;1}=1$.  Similarly, the trace condition at $n=4$ gives $4=\lambda_{4;1}+\lambda_{4;2}+\lambda_{4;3}=\frac53+\frac53+\lambda_{4;3}$ and so $\lambda_{4;3}=\frac23$:
\begin{equation*} 
\begin{tabular}{p{1cm} p{1cm} p{1cm} p{1cm} p{1cm} p{1cm} l}
\ \,$n$&$0$&$1$&$2$&$3$&$4$&$5${\smallskip}\\ 
\hline\noalign{\smallskip}
$\lambda_{n;3}$&0 &0 &0 &? &$\frac23$ &$\frac{5}3${\smallskip}\\
$\lambda_{n;2}$&0 &0 &? &? &$\frac{5}3$ &$\frac{5}3${\smallskip}\\
$\lambda_{n;1}$&0 &1 &? &$\frac{5}3$ &$\frac{5}3$ &$\frac{5}3$\\
\end{tabular}
\end{equation*}
The remaining entries are not fixed.  In particular, we let $\lambda_{3;3}$ be some variable $x$ and note that by the trace condition, $3=\lambda_{3;1}+\lambda_{3;2}+\lambda_{3;3}=x+\lambda_{3;2}+\frac53$ and so $\lambda_{3;2}=\frac43-x$.  Similarly letting $\lambda_{2;2}=y$ gives $\lambda_{2;1}=2-y$: 
\begin{equation}
\label{equation.5 in 3 example 3}
\begin{tabular}{p{1cm} p{1cm} p{1cm} p{1cm} p{1cm} p{1cm} l}
\ \,$n$&$0$&$1$&$2$&$3$&$4$&$5${\smallskip}\\ 
\hline\noalign{\smallskip}
$\lambda_{n;3}$&0 &0 &0 &$x$ &$\frac23$ &$\frac{5}3${\smallskip}\\
$\lambda_{n;2}$&0 &0 &$y$ &$\frac{4}3-x$ &$\frac{5}3$ &$\frac{5}3${\smallskip}\\
$\lambda_{n;1}$&0 &1 &$2-y$ &$\frac{5}3$ &$\frac{5}3$ &$\frac{5}3$\\
\end{tabular}
\end{equation}
We take care to note that $x$ and $y$ in~\eqref{equation.5 in 3 example 3} are not arbitrary, but instead must be chosen so that the interlacing relations~\eqref{equation.5 in 3 example 3} are satisfied.   In particular, we have:
\begin{align}
\nonumber
\set{\lambda_{3;1},\lambda_{3;2},\lambda_{3;3}}\sqsubseteq\set{\lambda_{4;1},\lambda_{4;2},\lambda_{4;3}}&\quad\Longleftrightarrow\quad x\leq\tfrac23\leq\tfrac43-x\leq\tfrac53,\\
\label{equation.5 in 3 example 4}
\set{\lambda_{2;1},\lambda_{2;2},\lambda_{2;3}}\sqsubseteq\set{\lambda_{3;1},\lambda_{3;2},\lambda_{3;3}}&\quad\Longleftrightarrow\quad 0\leq x\leq y\leq\tfrac43-x\leq2-y\leq\tfrac53,\\
\nonumber
\set{\lambda_{1;1},\lambda_{1;2},\lambda_{1;3}}\sqsubseteq\set{\lambda_{2;1},\lambda_{2;2},\lambda_{2;3}}&\quad\Longleftrightarrow\quad 0\leq y\leq 1\leq 2-y.
\end{align}
By plotting each of the $11$ inequalities of \eqref{equation.5 in 3 example 4} as a half-plane (Figure~\ref{figure.5 in 3}(a)), we obtain a $5$-sided convex set (Figure~\ref{figure.5 in 3}(b)) of all $(x,y)$ such that~\eqref{equation.5 in 3 example 3} is a valid sequence of eigensteps.
\begin{figure}
\begin{center}
\includegraphics{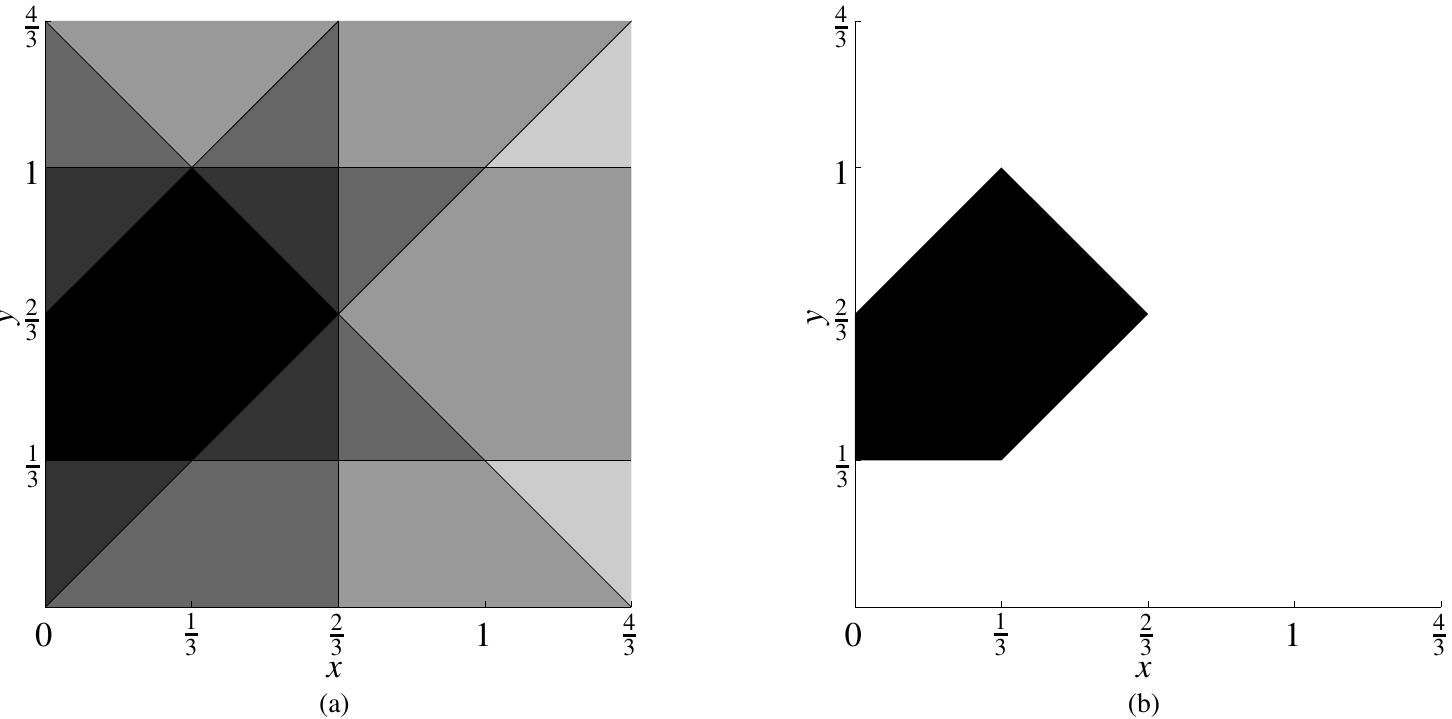}
\caption{\label{figure.5 in 3}Pairs of parameters $(x,y)$ that generate a valid sequence of eigensteps when substituted into~\eqref{equation.5 in 3 example 3}.  To be precise, in order to satisfy the interlacing requirements of Definition~\ref{definition.eigensteps}, $x$ and $y$ must be chosen so as to satisfy the $11$ pairwise inequalities summarized in~\eqref{equation.5 in 3 example 4}.  Each of these inequalities corresponds to a half-plane (a), and the set of $(x,y)$ that satisfy all of them is given by their intersection (b).  By Theorem~\ref{theorem.explicit frame construction}, any corresponding sequence of eigensteps~\eqref{equation.5 in 3 example 3} generates a $3\times 5$ UNTF and conversely, every $3\times 5$ UNTF is generated in this way.  As such, $x$ and $y$ may be viewed as the two essential parameters in the set of all such frames.  In particular, for $(x,y)$ that do not lie on the boundary of the set in (b), applying the algorithm of Theorem~\ref{theorem.explicit frame construction} to~\eqref{equation.5 in 3 example 3} and choosing $U_1=V_1=V_2=V_3=V_4=\rmI$ yields the $3\times 5$ UNTF whose elements are given in Table~\ref{table.5 in 3 parametrization}.
}
\end{center}
\end{figure}
Specifically, this set is the convex hull of $(0,\frac13)$, $(\frac13,\frac13)$, $(\frac23,\frac23)$, $(\frac13,1)$ and $(0,\frac23)$.  We note that though this analysis is straightforward in this case, it does not easily generalize to other cases in which $M$ and $N$ are large.

To complete Step~A of Theorem~\ref{theorem.explicit frame construction}, we pick any particular $(x,y)$ from the set depicted in Figure~\ref{figure.5 in 3}(b).  For example, if we pick $(x,y)=(0,\frac13)$ then~\eqref{equation.5 in 3 example 3} becomes:
\begin{equation}
\label{equation.5 in 3 example 5}
\begin{tabular}{p{1cm} p{1cm} p{1cm} p{1cm} p{1cm} p{1cm} l}
\ \,$n$&$0$&$1$&$2$&$3$&$4$&$5${\smallskip}\\ 
\hline\noalign{\smallskip}
$\lambda_{n;3}$&0 &0 &0 &$0$ &$\frac23$ &$\frac{5}3${\smallskip}\\
$\lambda_{n;2}$&0 &0 &$\frac13$ &$\frac{4}3$ &$\frac{5}3$ &$\frac{5}3${\smallskip}\\
$\lambda_{n;1}$&0 &1 &$\frac{5}3$ &$\frac{5}3$ &$\frac{5}3$ &$\frac{5}3$\\\end{tabular}
\end{equation}
We now perform Step~B of Theorem~\ref{theorem.explicit frame construction} for this particular choice of eigensteps.  First, we must choose a unitary matrix $U_1$.  Considering the equation for $U_{n+1}$ along with the fact that the columns of $U_N$ will form an eigenbasis for $F$, we see that our choice for $U_1$ merely rotates this eigenbasis, and hence the entire frame $F$, to our liking.  We choose $U_1=\rmI$ for the sake of simplicity.  Thus, 
\begin{equation*}
f_1=\sqrt{\mu_1}u_{1;1}=\begin{bmatrix}1\\0\\0\end{bmatrix}.
\end{equation*}
We now iterate, performing Steps~B.1 through~B.5 for $n=1$ to find $f_2$ and $U_2$, then performing Steps~B.1 through~B.5 for $n=2$ to find $f_3$ and $U_3$, and so on.  Throughout this process, the only remaining choices to be made appear in Step~B.1.  In particular, for $n=1$ Step~B.1 asks us to pick a block-diagonal unitary matrix $V_1$ whose blocks are sized according to the multiplicities of the eigenvalues $\set{\lambda_{1;1},\lambda_{1;2},\lambda_{1;3}}=\set{1,0,0}$.  That is, $V_1$ consists of a $1\times 1$ unitary block---a unimodular scalar---and a $2\times 2$ unitary block.  There are an infinite number of such $V_1$'s, each leading to a distinct frame.  For the sake of simplicity, we choose $V_1=\rmI$.  Having completed Step~B.1 for $n=1$, we turn to Step~B.2, which requires us to consider the columns of \eqref{equation.5 in 3 example 5} that correspond to $n=1$ and $n=2$:
\begin{equation}
\label{equation.5 in 3 example 6}
\begin{tabular}{p{1cm} p{1cm} l}
\ \,$n$&$1$&$2${\smallskip}\\ 
\hline\noalign{\smallskip}
$\lambda_{n;3}$&0 &0 {\smallskip}\\
$\lambda_{n;2}$&0 &$\frac13${\smallskip}\\
$\lambda_{n;1}$&1 &$\frac{5}3$
\end{tabular}
\end{equation}
In particular, we compute a set of indices $\calI_1\subseteq\set{1,2,3}$ that contains the indices $m$ of $\set{\lambda_{1;1},\lambda_{1;2},\lambda_{1;3}}=\set{1,0,0}$ for which (i) the multiplicity of $\lambda_{1;m}$ as a value of $\set{1,0,0}$ exceeds its multiplicity as a value of $\set{\lambda_{2;1},\lambda_{2;2},\lambda_{2;3}}=\set{\frac53,\frac13,0}$ and (ii) $m$ corresponds to the first occurrence of $\lambda_{1;m}$ as a value of $\set{1,0,0}$; by these criteria, we find $\calI_1=\set{1,2}$.  Similarly $m\in\calJ_1$ if and only if $m$ indicates the first occurrence of a value $\lambda_{2;m}$ whose multiplicity as a value of $\set{\frac53,\frac13,0}$ exceeds its multiplicity as a value of $\set{1,0,0}$, and so $\calJ_1=\set{1,2}$.  Equivalently, $\calI_1$ and $\calJ_1$ can be obtained by canceling common terms from~\eqref{equation.5 in 3 example 6}, working top to bottom; an explicit algorithm for doing so is given in Table~\ref{table.index set algorithm}.

Continuing with Step~B.2 for $n=1$, we now find the unique permutation $\pi_{\calI_1}:\set{1,2,3}\rightarrow\set{1,2,3}$ that is increasing on both $\calI_1=\set{1,2}$ and its complement $\calI_1^\rmc=\set3$ and takes $\calI_1$ to the first $R_1=\abs{\calI_1}=2$ elements of $\set{1,2,3}$.  In this particular instance, $\pi_{\calI_1}$ happens to be the identity permutation, and so $\Pi_{\calI_1}=\rmI$.  Since $\calJ_1=\set{1,2}=\calI_1$, we similarly have that $\pi_{\calJ_1}$ and $\Pi_{\calJ_1}$ are the identity permutation and matrix, respectively.

For the remaining steps, it is useful to isolate the terms in \eqref{equation.5 in 3 example 6} that correspond to $\calI_1$ and $\calJ_1$:
\begin{equation}
\label{equation.5 in 3 example 7}
\begin{tabular}{ll}
$\beta_2=\lambda_{1;2}=0$,\ &\ $\gamma_2=\lambda_{2;2}=\frac13$,\smallskip\\
$\beta_1=\lambda_{1;1}=1$,\ &\ $\gamma_1=\lambda_{2;1}=\frac{5}3$.
\end{tabular}
\end{equation}
In particular, in Step~B.3, we find the $R_1\times1=2\times1$ vector $v_1$ by computing quotients of products of differences of the values in~\eqref{equation.5 in 3 example 7}:
\begin{align}
\label{equation.5 in 3 example 8}
[v_1(1)]^2
&=-\frac{(\beta_1-\gamma_1)(\beta_1-\gamma_2)}{(\beta_1-\beta_2)}
=-\frac{(1-\frac53)(1-\frac13)}{(1-0)}
=\tfrac49,\\
\label{equation.5 in 3 example 9}
[v_1(2)]^2
&=-\frac{(\beta_2-\gamma_1)(\beta_2-\gamma_2)}{(\beta_2-\beta_1)}
=-\frac{(0-\frac53)(0-\frac13)}{(0-1)}
=\tfrac59,
\end{align}  
yielding $v_1=\begin{bmatrix}\frac23\smallskip\\\frac{\sqrt5}3\end{bmatrix}$.  Similarly, we compute $w_1=\begin{bmatrix}\frac{\sqrt5}{\sqrt6}\smallskip\\\frac1{\sqrt6}\end{bmatrix}$ according to the formulas:
\begin{align}
\label{equation.5 in 3 example 10}
[w_1(1)]^2
&=\frac{(\gamma_1-\beta_1)(\gamma_1-\beta_2)}{(\gamma_1-\gamma_2)}
=\frac{(\frac53-1)(\frac53-0)}{(\frac53-\frac13)}
=\tfrac56,\\
\label{equation.5 in 3 example 11}
[w_1(2)]^2
&=\frac{(\gamma_2-\beta_1)(\gamma_2-\beta_2)}{(\gamma_2-\gamma_1)}
=\frac{(\frac13-1)(\frac13-0)}{(\frac13-\frac53)}
=\tfrac16.
\end{align}  
Next, in Step~B.4, we form our second frame element $f_2=U_1V_1\Pi_{\calI_1}^\rmT\begin{bmatrix}v_1\\0\end{bmatrix}$:
\begin{equation*}
f_2
=\begin{bmatrix}1&0&0\\0&1&0\\0&0&1\end{bmatrix}\begin{bmatrix}1&0&0\\0&1&0\\0&0&1\end{bmatrix}\begin{bmatrix}1&0&0\\0&1&0\\0&0&1\end{bmatrix}\begin{bmatrix}\frac23\smallskip\\\frac{\sqrt5}3\smallskip\\0\end{bmatrix}
=\begin{bmatrix}\frac23\smallskip\\\frac{\sqrt5}3\smallskip\\0\end{bmatrix}.
\end{equation*}
As justified in the proof of Theorem~\ref{theorem.explicit frame construction}, the resulting partial sequence of vectors
\begin{equation*}
F_2
=\begin{bmatrix}f_1&f_2\end{bmatrix} 
=\begin{bmatrix}1&\frac23\smallskip\\0&\frac{\sqrt5}3\smallskip\\0&0\end{bmatrix}
\end{equation*}
has a frame operator $F_2^{}F_2^*$ whose spectrum is $\set{\lambda_{2;1},\lambda_{2;2},\lambda_{2;3}}=\set{\frac53,\frac13,0}$.  Moreover, a corresponding orthonormal eigenbasis for $F_2^{}F_2^*$ is computed in Step~B.5; here the first step is to compute the $R_1\times R_1=2\times 2$ matrix $W_1$ by computing a pointwise product of a certain $2\times 2$ matrix with the outer product of $v_1$ with $w_1$:
\begin{equation*}
W_1
=\begin{bmatrix}\frac1{\gamma_1-\beta_1}&\frac1{\gamma_2-\beta_1}\smallskip\\\frac1{\gamma_1-\beta_2}&\frac1{\gamma_2-\beta_2}\end{bmatrix}\odot\begin{bmatrix}v_1(1)\\v_1(2)\end{bmatrix}\begin{bmatrix}w_1(1)&w_1(2)\end{bmatrix}\\
=\begin{bmatrix}\frac32&-\frac32\smallskip\\\frac35&3\end{bmatrix}\odot\begin{bmatrix}\frac{2\sqrt5}{3\sqrt6}&\frac2{3\sqrt6}\smallskip\\\frac5{3\sqrt6}&\frac{\sqrt5}{3\sqrt6}\end{bmatrix}\\
=\begin{bmatrix}\frac{\sqrt5}{\sqrt6}&-\frac1{\sqrt6}\smallskip\\\frac1{\sqrt6}&\frac{\sqrt5}{\sqrt6}\end{bmatrix}.
\end{equation*}
Note that $W_1$ is a real orthogonal matrix whose diagonal and subdiagonal entries are strictly positive and whose superdiagonal entries are strictly negative; one can easily verify that every $W_n$ has this form.  More significantly, the proof of Theorem~\ref{theorem.explicit frame construction} guarantees that the columns of
\begin{equation*}
U_2
=U_1V_1\Pi_{\calI_1}^{\rmT}\begin{bmatrix}W_1&0\\0&\rmI\end{bmatrix}\Pi_{\calJ_1}
=\begin{bmatrix}1&0&0\\0&1&0\\0&0&1\end{bmatrix}\begin{bmatrix}1&0&0\\0&1&0\\0&0&1\end{bmatrix}\begin{bmatrix}1&0&0\\0&1&0\\0&0&1\end{bmatrix}\begin{bmatrix}\frac{\sqrt5}{\sqrt6}&-\frac1{\sqrt6}&0\smallskip\\\frac1{\sqrt6}&\frac{\sqrt5}{\sqrt6}&0\smallskip\\0&0&1\end{bmatrix}\begin{bmatrix}1&0&0\\0&1&0\\0&0&1\end{bmatrix}
=\begin{bmatrix}\frac{\sqrt5}{\sqrt6}&-\frac1{\sqrt6}&0\smallskip\\\frac1{\sqrt6}&\frac{\sqrt5}{\sqrt6}&0\smallskip\\0&0&1\end{bmatrix}
\end{equation*}
form an orthonormal eigenbasis of $F_2^{}F_2^*$.  This completes the $n=1$ iteration of Step~B; we now repeat this process for $n=2,3,4$.  For $n=2$, in Step~B.1 we arbitrarily pick some $3\times3$ diagonal unitary matrix $V_2$.  Note that if we wish our frame to be real, there are only $2^3=8$ such choices of $V_2$.  For the sake of simplicity, we choose $V_2=\rmI$ in this example.  Continuing, Step~B.2 involves canceling the common terms in
\begin{equation*}
\begin{tabular}{p{1cm} p{1cm} l}
\ \,$n$&$2$&$3${\smallskip}\\ 
\hline\noalign{\smallskip}
$\lambda_{n;3}$&$0$ 		&$0$ {\smallskip}\\
$\lambda_{n;2}$&$\frac13$	&$\frac43${\smallskip}\\
$\lambda_{n;1}$&$\frac53$	&$\frac53$
\end{tabular}
\end{equation*}
to find $\calI_2=\calJ_2=\set2$, and so 
\begin{equation*}
\Pi_{\calI_2}=\Pi_{\calJ_2}
=\begin{bmatrix}0&1&0\\1&0&0\\0&0&1\end{bmatrix}.
\end{equation*}  
In Step~B.3, we find that $v_2=w_2=\begin{bmatrix}1\end{bmatrix}$.  Steps~B.4 and B.5 then give that $F_3=\begin{bmatrix}f_1&f_2&f_3\end{bmatrix}$ and $U_3$ are
\begin{equation*}
F_3=\begin{bmatrix}1&\frac23&-\frac1{\sqrt6}\smallskip\\0&\frac{\sqrt5}3&\frac{\sqrt5}{\sqrt6}\smallskip\\0&0&0\end{bmatrix},\qquad
U_3=\begin{bmatrix}\frac{\sqrt5}{\sqrt6}&-\frac1{\sqrt6}&0\smallskip\\\frac1{\sqrt6}&\frac{\sqrt5}{\sqrt6}&0\smallskip\\0&0&1\end{bmatrix}.
\end{equation*}
The columns of $U_3$ form an orthonormal eigenbasis for the partial frame operator $F_3^{}F_3^*$ with corresponding eigenvalues $\set{\lambda_{3;1},\lambda_{3;2},\lambda_{3;3}}=\set{\frac53,\frac43,0}$.  For the $n=3$ iteration, we pick $V_3=\rmI$ and cancel the common terms in
\begin{equation*}
\begin{tabular}{p{1cm} p{1cm} l}
\ \,$n$&$3$&$4${\smallskip}\\ 
\hline\noalign{\smallskip}
$\lambda_{n;3}$&$0$ 		&$\frac23${\smallskip}\\
$\lambda_{n;2}$&$\frac43$	&$\frac53${\smallskip}\\
$\lambda_{n;1}$&$\frac53$	&$\frac53$
\end{tabular}
\end{equation*}
to obtain $\calI_3=\set{2,3}$ and $\calJ_3=\set{1,3}$, implying
\begin{equation*}
\Pi_{\calI_3}=\begin{bmatrix}0&1&0\\0&0&1\\1&0&0\end{bmatrix},
\qquad\Pi_{\calJ_3}=\begin{bmatrix}1&0&0\\0&0&1\\0&1&0\end{bmatrix},
\qquad
\begin{tabular}{ll}
$\beta_2=\lambda_{3;3}=0$,\ 		&\ $\gamma_2=\lambda_{4;3}=\frac23$,\smallskip\\
$\beta_1=\lambda_{3;2}=\frac43$,\ 	&\ $\gamma_1=\lambda_{4;1}=\frac53$.
\end{tabular}
\end{equation*}
In Step~B.3, we then compute the $R_3\times 1=2\times 1$ vectors $v_3$ and $w_3$ in a manner analogous to \eqref{equation.5 in 3 example 8}, \eqref{equation.5 in 3 example 9}, \eqref{equation.5 in 3 example 10} and \eqref{equation.5 in 3 example 11}:
\begin{equation*}
v_3=\begin{bmatrix}\frac1{\sqrt6}\smallskip\\\frac{\sqrt5}{\sqrt6}\end{bmatrix},
\qquad w_3=\begin{bmatrix}\frac{\sqrt5}3\smallskip\\\frac23\end{bmatrix}.
\end{equation*}
Note that in Step~B.4, the role of permutation matrix $\Pi_{\calI_3}^{\rmT}$ is that it maps the entries of $v_3$ onto the $\calI_3$ indices, meaning that $v_4$ lies in the span of the corresponding eigenvectors $\set{u_{3;m}}_{m\in\calI_3}$:
\begin{equation*}
f_4
=U_3V_3\Pi_{\calI_3}^{\rmT}\begin{bmatrix}v_3\\0\end{bmatrix}
=\begin{bmatrix}\frac{\sqrt5}{\sqrt6}&-\frac1{\sqrt6}&0\smallskip\\\frac1{\sqrt6}&\frac{\sqrt5}{\sqrt6}&0\smallskip\\0&0&1\end{bmatrix}\begin{bmatrix}1&0&0\\0&1&0\\0&0&1\end{bmatrix}\begin{bmatrix}0&0&1\\1&0&0\\0&1&0\end{bmatrix}\begin{bmatrix}\frac1{\sqrt6}\smallskip\\\frac{\sqrt5}{\sqrt6}\smallskip\\0\end{bmatrix}
=\begin{bmatrix}\frac{\sqrt5}{\sqrt6}&-\frac1{\sqrt6}&0\smallskip\\\frac1{\sqrt6}&\frac{\sqrt5}{\sqrt6}&0\smallskip\\0&0&1\end{bmatrix}\begin{bmatrix}0\smallskip\\\frac1{\sqrt6}\smallskip\\\frac{\sqrt5}{\sqrt6}\end{bmatrix}
=\begin{bmatrix}-\frac16\smallskip\\\frac{\sqrt5}6\smallskip\\\frac{\sqrt5}{\sqrt6}\end{bmatrix}.
\end{equation*}
In a similar fashion, the purpose of the permutation matrices in Step~B.5 is to embed the entries of the $2\times 2$ matrix $W_3$ into the $\calI_3=\set{2,3}$ rows and $\calJ_3=\set{1,3}$ columns of a $3\times 3$ matrix:
\begin{align*}
U_4
=U_3V_3\Pi_{\calI_3}^{\rmT}\begin{bmatrix}W_3&0\\0&\rmI\end{bmatrix}\Pi_{\calJ_3}
&=\begin{bmatrix}\frac{\sqrt5}{\sqrt6}&-\frac1{\sqrt6}&0\smallskip\\\frac1{\sqrt6}&\frac{\sqrt5}{\sqrt6}&0\smallskip\\0&0&1\end{bmatrix}\begin{bmatrix}1&0&0\\0&1&0\\0&0&1\end{bmatrix}\begin{bmatrix}0&0&1\\1&0&0\\0&1&0\end{bmatrix}\begin{bmatrix}\frac{\sqrt5}{\sqrt6}&-\frac1{\sqrt6}&0\smallskip\\\frac1{\sqrt6}&\frac{\sqrt5}{\sqrt6}&0\smallskip\\0&0&1\end{bmatrix}\begin{bmatrix}1&0&0\\0&0&1\\0&1&0\end{bmatrix}\\
&=\begin{bmatrix}\frac{\sqrt5}{\sqrt6}&-\frac1{\sqrt6}&0\smallskip\\\frac1{\sqrt6}&\frac{\sqrt5}{\sqrt6}&0\smallskip\\0&0&1\end{bmatrix}\begin{bmatrix}0&1&0\smallskip\\\frac{\sqrt5}{\sqrt6}&0&-\frac1{\sqrt6}\smallskip\\\frac1{\sqrt6}&0&\frac{\sqrt5}{\sqrt6}\end{bmatrix}\\
&=\begin{bmatrix}-\frac{\sqrt5}6&\frac{\sqrt5}{\sqrt6}&\frac16\smallskip\\\frac56&\frac1{\sqrt6}&-\frac{\sqrt5}6\smallskip\\\frac1{\sqrt6}&0&\frac{\sqrt5}{\sqrt6}\end{bmatrix}.
\end{align*}    
For the last iteration $n=4$, we again choose $V_4=\rmI$ in Step~B.1.  For Step~B.2, note that since
\begin{equation*}
\begin{tabular}{p{1cm} p{1cm} l}
\ \,$n$&$4$&$5${\smallskip}\\ 
\hline\noalign{\smallskip}
$\lambda_{n;3}$&$\frac23$ 	&$\frac53${\smallskip}\\
$\lambda_{n;2}$&$\frac53$	&$\frac53${\smallskip}\\
$\lambda_{n;1}$&$\frac53$	&$\frac53$
\end{tabular}
\end{equation*}
we have $\calI_4=\set3$ and $\calJ_4=\set1$, implying
\begin{equation*}
\Pi_{\calI_4}=\begin{bmatrix}0&0&1\\1&0&0\\0&1&0\end{bmatrix},\qquad 
\Pi_{\calJ_4}=\begin{bmatrix}1&0&0\\0&1&0\\0&0&1\end{bmatrix}.
\end{equation*}
Working through Steps~B.3, B.4 and B.5 yields the UNTF:
\begin{equation}
\label{equation.5 in 3 example 12}
F=F_5=\begin{bmatrix}1&\frac23&-\frac1{\sqrt6} & -\frac16 &\frac16\smallskip\\0&\frac{\sqrt5}3&\frac{\sqrt5}{\sqrt6}&\frac{\sqrt5}6&-\frac{\sqrt5}6\smallskip\\0&0&0&\frac{\sqrt5}{\sqrt6}&\frac{\sqrt5}{\sqrt6}\end{bmatrix},\qquad
U_5=\begin{bmatrix}\frac16&-\frac{\sqrt5}6&\frac{\sqrt5}{\sqrt6}\smallskip\\-\frac{\sqrt5}{6}&\frac{5}6&\frac1{\sqrt6}\smallskip\\\frac{\sqrt5}{\sqrt6}&\frac1{\sqrt6} &0\end{bmatrix}.
\end{equation}
We emphasize that the UNTF $F$ given in \eqref{equation.5 in 3 example 12} was based on the particular choice of eigensteps given in~\eqref{equation.5 in 3 example 5}, which arose by choosing $(x,y)=(0,\frac13)$ in~\eqref{equation.5 in 3 example 3}.  Choosing other pairs $(x,y)$ from the parameter set depicted in Figure~\ref{figure.5 in 3}(b) yields other UNTFs.  Indeed, since the eigensteps of a given $F$ are equal to those of $UF$ for any unitary operator $U$, we have in fact that each distinct $(x,y)$ yields a UNTF which is not unitarily equivalent to any of the others.  For example, by following the algorithm of Theorem~\ref{theorem.explicit frame construction} and choosing $U_1=\rmI$ and $V_n=\rmI$ in each iteration, we obtain the following four additional UNTFs, each corresponding to a distinct corner point of the parameter set:
\newcommand\PBS[1]{\let\temp=\\#1\let\\=\temp}
\begin{align*}
F\raggedleft &=\left[ \begin{tabular}{r p{0.8cm} p{0.8cm} p{0.8cm} p{0.8cm} }
$1$ \raggedleft & $\frac23$ \raggedleft & $0$ \raggedleft & $-\frac13$ \raggedleft &  $-\frac13${\smallskip} \PBS\raggedleft\\
$0$ \raggedleft & $\frac{\sqrt5}3$ \raggedleft & $0$ \raggedleft & $\frac{\sqrt5}3$ \raggedleft &$\frac{\sqrt5}3${\smallskip}\PBS\raggedleft\\
$0$ \raggedleft & $0$ \raggedleft & $1$ \raggedleft & $\frac1{\sqrt3}$\raggedleft &$-\frac1{\sqrt3}${\smallskip}\PBS\raggedleft\\
\end{tabular} \right] \qquad \text{for $(x,y)=(\tfrac13,\tfrac13)$,} \PBS\raggedleft\\ 
F\raggedleft &=\left[ \begin{tabular}{r p{0.8cm} p{0.8cm} p{0.8cm} p{0.8cm} }
$1$ \raggedleft & $\frac13$ \raggedleft & $\frac13$ \raggedleft & $-\frac13$ \raggedleft &$-\frac1{\sqrt3}${\smallskip}\PBS\raggedleft\\
$0$ \raggedleft & $\frac{\sqrt{8}}3$ \raggedleft & $\frac1{3 \sqrt2}$ \raggedleft & $-\frac1{3 \sqrt2}$ \raggedleft &$\frac{\sqrt2}{\sqrt3}${\smallskip}\PBS\raggedleft\\
$0$ \raggedleft & $0$ \raggedleft & $\frac{\sqrt5}{\sqrt6}$ \raggedleft & $\frac{\sqrt5}{\sqrt6}$ \raggedleft & $0$ {\smallskip}\PBS\raggedleft\\
\end{tabular} \right] \qquad \text{for $(x,y)=(\tfrac23,\tfrac23)$,} \PBS\raggedleft\\ 
F\raggedleft &=\left[ \begin{tabular}{r p{0.8cm} p{0.8cm} p{0.8cm} p{0.8cm} }
$1$ \raggedleft & $0$ \raggedleft & $0$ \raggedleft &$\frac1{\sqrt3}$ \raggedleft & $-\frac1{\sqrt3}${\smallskip}\PBS\raggedleft\\
$0$ \raggedleft & $1$ \raggedleft & $\frac23$ \raggedleft & $-\frac13$ \raggedleft &$-\frac13${\smallskip}\PBS\raggedleft\\
$0$ \raggedleft & $0$ \raggedleft & $\frac{\sqrt5}3$ \raggedleft & $\frac{\sqrt5}3$ \raggedleft & $\frac{\sqrt5}3${\smallskip}\PBS\raggedleft\\
\end{tabular} \right] \qquad \text{for $(x,y)=(\tfrac13,1)$,} \PBS\raggedleft\\          
F\raggedleft &=\left[ \begin{tabular}{r p{0.8cm} p{0.8cm} p{0.8cm} p{0.8cm} }
$1$ \raggedleft & $\frac13$ \raggedleft & $-\frac1{\sqrt3}$ \raggedleft &$\frac13$ \raggedleft & $-\frac13${\smallskip}\PBS\raggedleft\\
$0$ \raggedleft & $\frac{\sqrt{8}}3$ \raggedleft & $\frac{\sqrt2}{\sqrt3}$ \raggedleft & $\frac1{3 \sqrt2}$ \raggedleft &$-\frac1{3 \sqrt2}${\smallskip}\PBS\raggedleft\\
$0$ \raggedleft & $0$ \raggedleft & $0$ \raggedleft & $\frac{\sqrt5}{\sqrt6}$ \raggedleft & $\frac{\sqrt5}{\sqrt6}${\smallskip}\PBS\raggedleft\\
\end{tabular} \right] \qquad \text{for $(x,y)=(0,\tfrac23)$.}                                                                
\end{align*}
Notice that, of the four UNTFs above, the second and fourth are actually the same up to a permutation of the frame elements.  This is an artifact of our method of construction, namely, that our choices for eigensteps, $U_1$, and $\set{V_n}_{n=1}^{N-1}$ determine the \textit{sequence} of frame elements.  As such, we can recover all permutations of a given frame by modifying these choices.  

We emphasize that these four UNTFs along with that of~\eqref{equation.5 in 3 example 12} are but five examples from the continuum of all such frames.  Indeed, keeping $x$ and $y$ as variables in~\eqref{equation.5 in 3 example 3} and applying the algorithm of Theorem~\ref{theorem.explicit frame construction}---again choosing $U_1=\rmI$ and $V_n=\rmI$ in each iteration for the sake of simplicity---yields the frame elements given in Table~\ref{table.5 in 3 parametrization}.  Here, we restrict $(x,y)$ so as to not lie on the boundary of the parameter set of Figure~\ref{figure.5 in 3}(b).  This restriction simplifies the analysis, as it prevents all unnecessary repetitions of values in neighboring columns in~\eqref{equation.5 in 3 example 3}.  Table~\ref{table.5 in 3 parametrization} gives an explicit parametrization for a two-dimensional manifold that lies within the set of all UNTFs consisting of five elements in three-dimensional space.  By Theorem~\ref{theorem.explicit frame construction}, this can be generalized so as to yield all such frames, provided we both (i) further consider $(x,y)$ that lie on each of the five line segments that constitute the boundary of the parameter set and (ii) throughout generalize $V_n$ to an arbitrary block-diagonal unitary matrix, where the sizes of the blocks are chosen in accordance with Step~B.1.
\end{example}

\begin{table}
\begin{small}
\begin{align*}
f_1&=\begin{bmatrix}1\\0\\0\end{bmatrix}\\
f_2&=\begin{bmatrix}1-y\\\sqrt{y(2-y)}\\0\end{bmatrix}\\
f_3&=\left[\begin{array}{ccc}%
\hspace{12.5pt}\frac{\sqrt{(3y-1)(2+3x-3y)(2-x-y)}}{6\sqrt{1-y}}\hspace{12.5pt}%
&-&\frac{\sqrt{(5-3y)(4-3x-3y)(y-x)}}{6\sqrt{1-y}}\smallskip\\%
\frac{\sqrt{y(3y-1)(2+3x-3y)(2-x-y)}}{6\sqrt{(1-y)(2-y)}}
&+&\frac{\sqrt{(5-3y)(2-y)(4-3x-3y)(y-x)}}{6\sqrt{y(1-y)}}\smallskip\\%
\frac{\sqrt{5x(4-3x)}}{3\sqrt{y(2-y)}}%
\end{array}\right]\\
f_4&=\left[\begin{array}{ccccccc}%
-\frac{\sqrt{(4-3x)(3y-1)(2-x-y)(4-3x-3y)}}{12\sqrt{(2-3x)(1-y)}}%
&-&\frac{\sqrt{(4-3x)(5-3y)(y-x)(2+3x-3y)}}{12\sqrt{(2-3x)(1-y)}}%
&-&\frac{\sqrt{x(3y-1)(y-x)(2+3x-3y)}}{4\sqrt{3(2-3x)(1-y)}}%
&+&\frac{\sqrt{x(5-3y)(2-x-y)(4-3x-3y)}}{4\sqrt{3(2-3x)(1-y)}}\smallskip\\%
-\frac{\sqrt{(4-3x)y(3y-1)(2-x-y)(4-3x-3y)}}{12\sqrt{(2-3x)(1-y)(2-y)}}%
&+&\frac{\sqrt{(4-3x)(2-y)(5-3y)(y-x)(2+3x-3y)}}{12\sqrt{(2-3x)y(1-y)}}%
&-&\frac{\sqrt{xy(3y-1)(y-x)(2+3x-3y)}}{4\sqrt{3(2-3x)(1-y)(2-y)}}%
&-&\frac{\sqrt{x(2-y)(5-3y)(2-x-y)(4-3x-3y)}}{4\sqrt{3(2-3x)y(1-y)}}\smallskip\\%
\frac{\sqrt{5x(2+3x-3y)(4-3x-3y)}}{6\sqrt{(2-3x)y(2-y)}}%
&+&\frac{\sqrt{5(4-3x)(y-x)(2-x-y)}}{2\sqrt{3(2-3x)y(2-y)}}%
\end{array}\right]\\
f_5&=\left[\begin{array}{ccccccc}%
\hspace{4.25pt}\frac{\sqrt{(4-3x)(3y-1)(2-x-y)(4-3x-3y)}}{12\sqrt{(2-3x)(1-y)}}\hspace{4.25pt}%
&+&\frac{\sqrt{(4-3x)(5-3y)(y-x)(2+3x-3y)}}{12\sqrt{(2-3x)(1-y)}}%
&-&\frac{\sqrt{x(3y-1)(y-x)(2+3x-3y)}}{4\sqrt{3(2-3x)(1-y)}}%
&+&\frac{\sqrt{x(5-3y)(2-x-y)(4-3x-3y)}}{4\sqrt{3(2-3x)(1-y)}}\smallskip\\%
\frac{\sqrt{(4-3x)y(3y-1)(2-x-y)(4-3x-3y)}}{12\sqrt{(2-3x)(1-y)(2-y)}}%
&-&\frac{\sqrt{(4-3x)(2-y)(5-3y)(y-x)(2+3x-3y)}}{12\sqrt{(2-3x)y(1-y)}}%
&-&\frac{\sqrt{xy(3y-1)(y-x)(2+3x-3y)}}{4\sqrt{3(2-3x)(1-y)(2-y)}}%
&-&\frac{\sqrt{x(2-y)(5-3y)(2-x-y)(4-3x-3y)}}{4\sqrt{3(2-3x)y(1-y)}}\smallskip\\%
-\frac{\sqrt{5x(2+3x-3y)(4-3x-3y)}}{6\sqrt{(2-3x)y(2-y)}}%
&+&\frac{\sqrt{5(4-3x)(y-x)(2-x-y)}}{2\sqrt{3(2-3x)y(2-y)}}%
\end{array}\right]
\end{align*}
\end{small}
\caption{\label{table.5 in 3 parametrization}  A continuum of UNTFs.  To be precise, for each choice of $(x,y)$ that lies in the interior of the parameter set depicted in Figure~\ref{figure.5 in 3}(b), these five elements form a UNTF for $\bbR^3$, meaning that its $3\times 5$ synthesis matrix $F$ has both unit norm columns and orthogonal rows of constant squared norm $\frac53$.  These frames were produced by applying the algorithm of Theorem~\ref{theorem.explicit frame construction} to the sequence of eigensteps given in~\eqref{equation.5 in 3 example 3}, choosing $U_1=\rmI$ and $V_n=\rmI$ for all $n$.  These formulas give an explicit parametrization for a $2$-dimensional manifold that lies within the set of all $3\times 5$ UNTFs.  By Theorem~\ref{theorem.explicit frame construction}, every such UNTF arises in this manner, with the understanding that $(x,y)$ may indeed be chosen from the boundary of the parameter set and that the initial eigenbasis $U_1$ and the block-diagonal unitary matrices $V_n$ are not necessarily the identity.} 
\end{table}

Having discussed the utility of Theorem~\ref{theorem.explicit frame construction}, we turn to its proof.

\begin{proof}[Proof of Theorem~\ref{theorem.explicit frame construction}]
($\Leftarrow$) Let $\set{\lambda_m}_{m=1}^{M}$ and $\set{\mu_n}_{n=1}^{N}$ be arbitrary nonnegative nonincreasing sequences and take an arbitrary sequence of eigensteps $\set{\set{\lambda_{n;m}}_{m=1}^{M}}_{n=0}^{N}$ in accordance with Definition~\ref{definition.eigensteps}.  Note here we do not assume that such a sequence of eigensteps actually exists for this particular choice of $\set{\lambda_m}_{m=1}^{M}$ and $\set{\mu_n}_{n=1}^{N}$; if one does not, then this direction of the result is vacuously true.

We claim that any $F=\set{f_n}_{n=1}^{N}$ constructed according to Step~B has the property that for all $n=1,\dotsc,N$, the spectrum of the frame operator $F_n^{}F_n^*$ of $F_n=\set{f_{n'}}_{n'=1}^{n}$ is $\set{\lambda_{n;m}}_{m=1}^{M}$, and that the columns of $U_n$ form an orthonormal eigenbasis for $F_n^{}F_n^*$.  Note that by Lemma~\ref{lemma.eigensteps yield desired properties}, proving this claim will yield our stated result that the spectrum of $FF^*$ is $\set{\lambda_m}_{m=1}^{M}$ and that $\norm{f_n}^2=\mu_n$ for all $n=1,\dotsc,N$.  Since Step~B is an iterative algorithm, we prove this claim by induction on $n$.  To be precise, Step~B begins by letting $U_1=\{u_{1;m}\}_{m=1}^M$ and $f_1=\sqrt{\mu_1} u_{1;1}$.  The columns of $U_1$ form an orthonormal eigenbasis for $F_1^{}F_1^*$ since $U_1$ is unitary by assumption and
\begin{equation*}
F_1^{}F_1^{*}u_{1;m}
=\ip{u_{1;m}}{f_1}f_1
=\ip{u_{1;m}}{\sqrt{\mu_1}u_{1;1}}\sqrt{\mu_1}u_{1;1}
=\mu_1\ip{u_{1;m}}{u_{1;1}}u_{1;1}
=\left\{\begin{array}{ll}\mu_1 u_{1;1}&m=1,\\0&m\neq 1,\end{array}\right.
\end{equation*}
for all $m=1,\dotsc,M$.  As such, the spectrum of $F_1^{}F_1^*$ consists of $\mu_1$ and $M-1$ repetitions of $0$.  To see that this spectrum matches the values of $\set{\lambda_{1;m}}_{m=1}^{M}$, note that by Definition~\ref{definition.eigensteps}, we know $\set{\lambda_{1;m}}_{m=1}^{M}$ interlaces on the trivial sequence $\set{\lambda_{0;m}}_{m=1}^{M}=\set{0}_{m=1}^{M}$ in the sense of~\eqref{equation.definition of interlacing}, implying $\lambda_{1;m}=0$ for all $m\geq2$; this in hand, note this definition further gives that $\lambda_{1;1}=\sum_{m=1}^{M}\lambda_{1;m}=\mu_1$.  Thus, our claim indeed holds for $n=1$.

We now proceed by induction, assuming that for any given $n=1,\dotsc,N-1$ the process of Step~B has produced $F_n=\set{f_{n'}}_{n'=1}^{n}$ such that the spectrum of $F_n^{}F_n^*$ is $\set{\lambda_{n;m}}_{m=1}^{M}$ and that the columns of $U_n$ form an orthonormal eigenbasis for $F_n^{}F_n^*$.  In particular, we have $F_n^{}F_n^*U_n^{}=U_n^{}D_n^{}$ where $D_n$ is the diagonal matrix whose diagonal entries are $\set{\lambda_{n;m}}_{m=1}^{M}$.  Defining $D_{n+1}$ analogously from $\set{\lambda_{n+1;m}}_{m=1}^{M}$, we show that constructing $f_{n+1}$ and $U_{n+1}$ according to Step~B implies $F_{n+1}^{}F_{n+1}^*U_{n+1}^{}=U_{n+1}^{}D_{n+1}^{}$ where $U_{n+1}$ is unitary; doing such proves our claim.

To do so, pick any unitary matrix $V_n$ according to Step~B.1.  To be precise, let $K_n$ denote the number of distinct values in $\{\lambda_{n;m}\}_{m=1}^M$, and for any $k=1,\dotsc,K_n$, let $L_{n;k}$ denote the multiplicity of the $k$th value.  We write the index $m$ as an increasing function of $k$ and $l$, that is, we write $\smash\{\lambda_{n;m}\}_{m=1}^M$ as \smash{$\{\lambda_{n;m(k,l)}\}_{k=1}^{K_n}\,_{l=1}^{L_{\smash{n;k}}}$} where $m(k,l)<m(k',l')$ if $k<k'$ or if $k=k'$ and $l<l'$.  We let $V_n$ be an $M\times M$ block-diagonal unitary matrix consisting of $K$ diagonal blocks, where for any $k=1,\dotsc,K$, the $k$th block is an $L_{n;k}\times L_{n;k}$ unitary matrix.  In the extreme case where all the values of $\{\lambda_{n;m}\}_{m=1}^M$ are distinct, we have that $V_n$ is a diagonal unitary matrix, meaning it is a diagonal matrix whose diagonal entries are unimodular.  Even in this case, there is some freedom in how to choose $V_n$; this is the only freedom that the Step~B process provides when determining $f_{n+1}$.  In any case, the crucial fact about $V_n$ is that its blocks match those corresponding to distinct multiples of the identity that appear along the diagonal of $D_n$, implying $D_nV_n=V_nD_n$.

Having chosen $V_n$, we proceed to Step~B.2.  Here, we produce subsets $\calI_n$ and $\calJ_n$ of $\set{1,\dotsc,M}$ that are the remnants of the indices of $\{\lambda_{n;m}\}_{m=1}^M$ and $\{\lambda_{n+1;m}\}_{m=1}^M$, respectively, obtained by canceling the values that are common to both sequences, working backwards from index $M$ to index $1$.  An explicit algorithm for doing so is given in Table~\ref{table.index set algorithm}.  
\begin{table}
\begin{center}
\begin{tabular}{ll}
01&$\calI_n^{(M)}:=\set{1,\dotsc,M}$\\
02&$\calJ_n^{(M)}:=\set{1,\dotsc,M}$\\
03&for $m=M,\dotsc,1$\\
04&\quad if $\lambda_{n;m}\in\{\lambda_{n+1;m'}\}_{m'\in\calJ_n^{(m)}}$\\
05&\quad\quad $\calI_{n}^{(m-1)}:=\calI_{n}^{(m)}\setminus\{m\}$\\
06&\quad\quad $\calJ_{n}^{(m-1)}:=\calJ_{n}^{(m)}\setminus \{m'\}$ where $m'=\max{\{m''\in\calJ_{n}^{(m)}:\lambda_{n+1;m''}=\lambda_{n;m}}\}$\\
07&\quad else\\
08&\quad\quad $\calI_{n}^{(m-1)}:=\calI_{n}^{(m)}$\\
09&\quad\quad $\calJ_{n}^{(m-1)}:=\calJ_{n}^{(m)}$\\
10&\quad end if\\
11&end for\\
12&$\calI_n:=\calI_n^{(1)}$\\
13&$\calJ_n:=\calI_n^{(1)}$\\
\end{tabular}
\end{center}
\caption{\label{table.index set algorithm}An explicit algorithm for computing the index sets $\calI_n$ and $\calJ_n$ in Step~B.2 of Theorem~\ref{theorem.explicit frame construction}}
\end{table}
Note that for each $m=M,\dotsc,1$ (Line~03), we either remove a single element from both $\calI_n^{(m)}$ and $\calJ_n^{(m)}$ (Lines~04--06) or remove nothing from both (Lines~07--09), meaning that $\calI_n:=\calI_n^{(1)}$ and $\calJ_n:=\calJ_n^{(1)}$ have the same cardinality, which we denote $R_n$.  Moreover, since $\set{\lambda_{n+1;m}}_{m=1}^{M}$ interlaces on $\set{\lambda_{n;m}}_{m=1}^{M}$, then for any real scalar $\lambda$ whose multiplicity as a value of $\set{\lambda_{n;m}}_{m=1}^{M}$ is $L$, we have that its multiplicity as a value of $\set{\lambda_{n+1;m}}_{m=1}^{M}$ is either $L-1$, $L$ or $L+1$.  When these two multiplicities are equal, this algorithm completely removes the corresponding indices from both $\calI_n$ and $\calJ_n$.  On the other hand, if the new multiplicity is $L-1$ or $L+1$, then the least such index in $\calI_n$ or $\calJ_n$ is left behind, respectively, leading to the definitions of $\calI_n$ or $\calJ_n$ given in Step~B.2.  Having these sets, it is trivial to find the corresponding permutations $\pi_{\calI_n}$ and $\pi_{\calJ_n}$ on $\set{1,\dotsc,M}$ and to construct the associated projection matrices $\Pi_{\calI_n}$ and $\Pi_{\calJ_n}$.

We now proceed to Step~B.3.  For the sake of notational simplicity, let $\set{\beta_r}_{r=1}^{R_n}$ and $\set{\gamma_r}_{r=1}^{R_n}$ denote the values of $\set{\lambda_{n;m}}_{m\in\calI_n}$ and $\set{\lambda_{n+1;m}}_{m\in\calJ_n}$, respectively.  That is, let $\beta_{\pi_{\calI_n}(m)}=\lambda_{n;m}$ for all $m\in\calI_n$ and $\gamma_{\pi_{\calJ_n}(m)}=\lambda_{n+1;m}$ for all $m\in\calJ_n$.  Note that due to the way in which $\calI_n$ and $\calJ_n$ were defined, we have that the values of $\set{\beta_r}_{r=1}^{R_n}$ and $\set{\gamma_r}_{r=1}^{R_n}$ are all distinct, both within each sequence and across the two sequences.  Moreover, since $\set{\lambda_{n;m}}_{m\in\calI_n}$ and $\set{\lambda_{n+1;m}}_{m\in\calJ_n}$ are nonincreasing while $\pi_{\calI_n}$ and $\pi_{\calJ_n}$ are increasing on $\calI_n$ and $\calJ_n$ respectively, then the values $\set{\beta_r}_{r=1}^{R_n}$ and $\set{\gamma_r}_{r=1}^{R_n}$ are strictly decreasing.  We further claim that $\set{\gamma_r}_{r=1}^{R_n}$ interlaces on $\set{\beta_r}_{r=1}^{R_n}$.  To see this, consider the four polynomials:
\begin{equation}
\label{equation.proof of explicit frame construction 1}
p_n(x)
=\prod_{m=1}^M(x-\lambda_{n;m}),\quad
p_{n+1}(x)
=\prod_{m=1}^M(x-\lambda_{n+1;m}),\quad
b(x)
=\prod_{r=1}^{R_n}(x-\beta_r),\quad
c(x)
=\prod_{r=1}^{R_n}(x-\gamma_r).
\end{equation}
Since $\set{\beta_r}_{r=1}^{R_n}$ and $\set{\gamma_r}_{r=1}^{R_n}$ were obtained by canceling the common terms from $\set{\lambda_{n;m}}_{m=1}^M$ and $\set{\lambda_{n+1;m}}_{m=1}^M$, we have that $p_{n+1}(x)/p_n(x)=c(x)/b(x)$ for all $x\notin\set{\lambda_{n;m}}_{m=1}^M$.  Writing any $r=1,\dotsc,R_n$ as $r=\pi_{\calI_n}(m)$ for some $m\in\calI_n$, we have that since $\set{\lambda_{n;m}}_{m=1}^M\sqsubseteq\set{\lambda_{n+1;m}}_{m=1}^M$, applying the ``only if" direction of Lemma~\ref{lemma.interlacing and nonpositive limits} with ``$p(x)$" and ``$q(x)$" being $p_n(x)$ and $p_{n+1}(x)$ gives
\begin{equation}
\label{equation.proof of explicit frame construction 2}
\lim_{x\rightarrow\beta_r}(x-\beta_r)\frac{c(x)}{b(x)}
=\lim_{x\rightarrow\lambda_{n;m}}(x-\lambda_{n;m})\frac{p_{n+1}(x)}{p_n(x)}
\leq0.
\end{equation}
Since~\eqref{equation.proof of explicit frame construction 2} holds for all $r=1,\dotsc,R_n$, applying ``if" direction of Lemma~\ref{lemma.interlacing and nonpositive limits} with ``$p(x)$" and ``$q(x)$" being $b(x)$ and $c(x)$ gives that $\set{\gamma_r}_{r=1}^{R_n}$ indeed interlaces on $\set{\beta_r}_{r=1}^{R_n}$.

Taken together, the facts that $\set{\beta_r}_{r=1}^{R_n}$ and $\set{\gamma_r}_{r=1}^{R_n}$ are distinct, strictly decreasing and interlacing sequences implies that the $R_n\times 1$ vectors $v_n$ and $w_n$ are well-defined.  To be precise, Step~B.3 may be rewritten as finding $v_n(r),w_n(r')\geq0$ for all $r,r'=1\dotsc,R_n$ such that
\begin{equation}
\label{equation.proof of explicit frame construction 3}
[v_n(r)]^2=-\,\frac{\displaystyle\prod_{r''=1}^{R_n} (\beta_r-\gamma_{r''})}{\displaystyle\prod_{\substack{r''=1\\r''\neq r}}^{R}(\beta_r-\beta_{r''})},
\qquad
[w_n(r')]^2=\frac{\displaystyle\prod_{r''=1}^{R_n} (\gamma_{r'}-\beta_{r''})}{\displaystyle\prod_{\substack{r''=1\\r''\neq r'}}^{R}(\gamma_{r'}-\gamma_{r''})}.
\end{equation}
Note the fact that the $\beta_r$'s and $\gamma_r$'s are distinct implies that the denominators in \eqref{equation.proof of explicit frame construction 3} are nonzero, and moreover that the quotients themselves are nonzero.  In fact, since $\set{\beta_r}_{r=1}^{R_n}$ is strictly decreasing, then for any fixed $r$, the values \smash{$\set{\beta_r-\beta_{r''}}_{r''\neq r}$} can be decomposed into $r-1$ negative values \smash{$\set{\beta_r-\beta_{r''}}_{r''=1}^{r-1}$} and $R_n-r$ positive values \smash{$\set{\beta_r-\beta_{r''}}_{r''=r+1}^{R_n}$}.  Moreover, since $\set{\beta_r}_{r=1}^{R_n}\sqsubseteq\set{\gamma_r}_{r=1}^{R_n}$, then for any such $r$, the values $\set{\beta_r-\gamma_{r''}}_{r''=1}^{R_n}$ can be broken into $r$ negative values \smash{$\set{\beta_r-\gamma_{r''}}_{r''=1}^{r}$} and $R_n-r$ positive values \smash{$\set{\beta_r-\gamma_{r''}}_{r''=r+1}^{R_n}$}.  With the inclusion of an additional negative sign, we see that the quantity defining $[v_n(r)]^2$ in~\eqref{equation.proof of explicit frame construction 3} is indeed positive.  Meanwhile, the quantity defining $[w_n(r')]^2$ has exactly $r'-1$ negative values in both the numerator and denominator, namely \smash{$\set{\gamma_{r'}-\beta_{r''}}_{r''=1}^{r'-1}$} and \smash{$\set{\gamma_{r'}-\gamma_{r''}}_{r''=1}^{r'-1}$}, respectively.

Having shown that the $v_n$ and $w_n$ of Step~B.3 are well-defined, we now take $f_{n+1}$ and $U_{n+1}$ as defined in Steps~B.4 and B.5.  Recall that what remains to be shown in this direction of the proof is that $U_{n+1}$ is a unitary matrix and that $F_{n+1}=\set{f_{n'}}_{n'=1}^{n+1}$ satisfies $F_{n+1}^{}F_{n+1}^{*}U_{n+1}^{}=U_{n+1}^{}D_{n+1}^{}$.  To do so, consider the definition of $U_{n+1}$ and recall that $U_n$ is unitary by the inductive hypothesis, $V_n$ is unitary by construction, and that the permutation matrices $\Pi_{\calI_n}$ and $\Pi_{\calJ_n}$ are orthogonal, that is, unitary and real.  As such, to show that $U_{n+1}$ is unitary, it suffices to show that the $R_n\times R_n$ real matrix $W_n$ is orthogonal.  To do this, recall that eigenvectors corresponding to distinct eigenvalues of self-adjoint operators are necessarily orthogonal.  As such, to show that $W_n$ is orthogonal, it suffices to show that the columns of $W_n$ are eigenvectors of a real symmetric operator.  To this end, we claim
\begin{equation}
\label{equation.proof of explicit frame construction 4}
(D_{n;\calI_n}+v_n^{}v_n^\rmT)W_n
=W_nD_{n+1;\calJ_n},
\qquad
W_n^\rmT W_n^{}(r,r)=1,\ \forall r=1,\dotsc,R_n,
\end{equation}
where $D_{n;\calI_n}$ and $D_{n+1;\calJ_n}$ are the $R_n\times R_n$ diagonal matrices whose $r$th diagonal entries are given by \smash{$\beta_r=\lambda_{n;\pi_{\calI_n}^{-1}(r)}$} and \smash{$\gamma_r=\lambda_{n+1;\pi_{\calJ_n}^{-1}(r)}$}, respectively.  To prove \eqref{equation.proof of explicit frame construction 4}, note that for any $r,r'=1,\dotsc,R_n$, 
\begin{equation}
\label{equation.proof of explicit frame construction 5}
[(D_{n;\calI_n}+v_n^{}v_n^\rmT)W_n](r,r')
=(D_{n;\calI_n}W_n)(r,r')+(v_n^{}v_n^\rmT W_n)(r,r')
=\beta_r W_n(r,r')+v_n(r)\sum_{r''=1}^{R_n}v_n(r'')W_n(r'',r').
\end{equation}
Rewriting the definition of $W_n$ from Step~B.5 in terms of $\set{\beta_r}_{r=1}^{R_n}$ and $\set{\gamma_r}_{r=1}^{R_n}$ gives
\begin{equation}
\label{equation.proof of explicit frame construction 6}
W_n(r,r')=\frac{v_n(r)w_n(r')}{\gamma_{r'}-\beta_{r}}.
\end{equation}
Substituting~\eqref{equation.proof of explicit frame construction 6} into~\eqref{equation.proof of explicit frame construction 5} gives
\begin{align}
\nonumber
[(D_{n;\calI_n}+v_n^{}v_n^\rmT)W_n](r,r')
&=\beta_r\frac{v_n(r)w_n(r')}{\gamma_{r'}-\beta_{r}}+v_n(r)\sum_{r''=1}^{R_n}v_n(r'')\frac{v_n(r'')w_n(r')}{\gamma_{r'}-\beta_{r''}}\\
\label{equation.proof of explicit frame construction 7}
&=v_n(r)w_n(r')\,\biggparen{\frac{\beta_r}{\gamma_{r'}-\beta_{r}}+\sum_{r''=1}^{R_n}\frac{[v_n(r'')]^2}{\gamma_{r'}-\beta_{r''}}}\,.
\end{align}
Simplifying~\eqref{equation.proof of explicit frame construction 7} requires a polynomial identity.  Note that the difference $\smash\prod_{r''=1}^{R_n}(x-\gamma_{r''})-\prod_{r''=1}^{R_n}(x-\beta_{r''})$ of two monic polynomials is itself a polynomial of degree at most $R_n-1$, and as such it can be written as the Lagrange interpolating polynomial determined by the $R_n$ distinct points $\set{\beta_r}_{r=1}^{R_n}$:
\begin{equation}
\label{equation.proof of explicit frame construction 8}
\prod_{r''=1}^{R_n}(x-\gamma_{r''})-\prod_{r''=1}^{R_n}(x-\beta_{r''})
=\sum_{r''=1}^{R_n}\biggparen{\prod_{r=1}^{R_n}(\beta_{r''}-\gamma_{r})-0}\prod_{\substack{r=1\\r\neq r''}}^{R_n}\frac{(x-\beta_{r})}{(\beta_{r''}-\beta_{r})} 
=\sum_{r''=1}^{R_n}\frac{\displaystyle\prod_{r=1}^{R_n}(\beta_{r''}-\gamma_{r})}{\displaystyle\prod_{\substack{r=1\\r\neq r''}}^{R_n} (\beta_{r''}-\beta_{r})}\prod_{\substack{r=1\\r\neq r''}}^{R_n} (x-\beta_{r}).
\end{equation}
Recalling the expression for $[v_n(r)]^2$ given in~\eqref{equation.proof of explicit frame construction 3}, \eqref{equation.proof of explicit frame construction 8} can be rewritten as
\begin{equation}
\label{equation.proof of explicit frame construction 9}
\prod_{r''=1}^{R_n}(x-\beta_{r''})-\prod_{r''=1}^{R_n}(x-\gamma_{r''})
=\sum_{r''=1}^{R_n}[v_n(r'')]^2\displaystyle\prod_{\substack{r=1\\r\neq r''}}^{R_n} (x-\beta_{r}).
\end{equation}
Dividing both sides of \eqref{equation.proof of explicit frame construction 9} by $\displaystyle\prod_{r''=1}^{R_n}(x-\beta_{r''})$ gives
\begin{equation}
\label{equation.proof of explicit frame construction 10}
1-\prod_{r''=1}^{R_n}\frac{(x-\gamma_{r''})}{(x-\beta_{r''})}=\sum_{r''=1}^{R_n}\frac{[v_n(r'')]^2}{(x-\beta_{r''})}\qquad\forall x\notin\set{\beta_r}_{r=1}^{R_n}.
\end{equation}
For any $r'=1,\dotsc,R_n$, letting $x=\gamma_{r'}$ in \eqref{equation.proof of explicit frame construction 10} makes the left-hand product vanish, yielding the identity:
\begin{equation}
\label{equation.proof of explicit frame construction 11}
1=\sum_{r''=1}^{R_n}\frac{[v_n(r'')]^2}{(\gamma_{r'}-\beta_{r''})} \qquad \forall r'=1,\dotsc,R_n.
\end{equation}
Substituting~\eqref{equation.proof of explicit frame construction 11} into \eqref{equation.proof of explicit frame construction 7} and then recalling \eqref{equation.proof of explicit frame construction 6} gives
\begin{equation}
\label{equation.proof of explicit frame construction 12}
[(D_{n;\calI_n}+v_n^{}v_n^\rmT)W_n](r,r')
=v_n(r)w_n(r')\,\biggparen{\frac{\beta_r}{\gamma_{r'}-\beta_{r}}+1}
=\gamma_{r'}\frac{v_n(r)w_n(r')}{\gamma_{r'}-\beta_{r}}
=\gamma_{r'}W_n(r,r')
=(W_nD_{n+1;\calJ_n})(r,r').
\end{equation}
As~\eqref{equation.proof of explicit frame construction 12} holds for all $r,r'=1,\dotsc,R_n$ we have the first half of our claim~\eqref{equation.proof of explicit frame construction 4}.  In particular, we know that the columns of $W_n$ are eigenvectors of the real symmetric operator \smash{$D_{n;\calI_n}+v_n^{}v_n^\rmT$} which correspond to the distinct eigenvalues $\set{\gamma_r}_{r=1}^{R_n}$.  As such, the columns of $W_n$ are orthogonal.  To show that $W_n$ is an orthogonal matrix, we must further show that the columns of $W_n$ have unit norm, namely the second half of~\eqref{equation.proof of explicit frame construction 4}.  To prove this, at any $x\notin\set{\beta_r}_{r=1}^{R_n}$ we differentiate both sides of~\eqref{equation.proof of explicit frame construction 10} with respect to $x$ to obtain
\begin{equation}
\label{equation.proof of explicit frame construction 13}
\sum_{r''=1}^{R_n}\biggbracket{\prod_{\substack{r=1\\r\neq r''}}^{R_n} \frac{(x-\gamma_{r})}{(x-\beta_{r})}}\frac{\gamma_{r''}-\beta_{r''}}{(x-\beta_{r''})^2}
=\sum_{r''=1}^{R_n} \frac{[v_n(r'')]^2}{(x-\beta_{r''})^2}\qquad\forall x\notin\set{\beta_r}_{r=1}^{R_n}.
\end{equation}
For any $r'=1,\dotsc,R_n$, letting $x=\gamma_{r'}$ in \eqref{equation.proof of explicit frame construction 13} makes the left-hand summands where $r''\neq r'$ vanish; by \eqref{equation.proof of explicit frame construction 3}, the remaining summand where $r''=r'$ can be written as:
\begin{equation}
\label{equation.proof of explicit frame construction 14}
\frac1{[w_n(r')]^2}
=\frac{\displaystyle\prod_{\substack{r=1\\r\neq r'}}^{R}(\gamma_{r'}-\gamma_{r})}{\displaystyle\prod_{r=1}^{R_n} (\gamma_{r'}-\beta_{r})}
=\biggbracket{\prod_{\substack{r=1\\r\neq r'}}^{R_n} \frac{(\gamma_{r'}-\gamma_{r})}{(\gamma_{r'}-\beta_{r})}}\frac{\gamma_{r'}-\beta_{r'}}{(\gamma_{r'}-\beta_{r'})^2}
=\sum_{r''=1}^{R_n} \frac{[v_n(r'')]^2}{(\gamma_{r'}-\beta_{r''})^2}.
\end{equation}
We now use this identity to show that the columns of $W_n$ have unit norm; for any $r'=1,\dotsc,R_n$,~\eqref{equation.proof of explicit frame construction 6} and~\eqref{equation.proof of explicit frame construction 14} give
\begin{equation*}
(W_n^\rmT W_n^{})(r',r')
=\sum_{r''=1}^{R_n}[W_n(r'',r')]^2
=\sum_{r''=1}^{R_n}\biggparen{\frac{v_n(r'')w_n(r')}{\gamma_{r'}-\beta_{r''}}}^2
=[w_n(r')]^2\sum_{r''=1}^{R_n} \frac{[v_n(r'')]^2}{(\gamma_{r'}-\beta_{r''})^2}
=[w_n(r')]^2\frac1{[w_n(r')]^2}
=1.
\end{equation*}  
Having shown that $W_n$ is orthogonal, we have that $U_{n+1}$ is unitary.

For this direction of the proof, all that remains to be shown is that \smash{$F_{n+1}^{}F_{n+1}^{*}U_{n+1}^{}=U_{n+1}^{}D_{n+1}^{}$}.  To do this, we write \smash{$F_{n+1}^{}F_{n+1}^{*}=F_n^{}F_n^{*}+f_{n+1}^{}f_{n+1}^{*}$} and recall the definition of $U_{n+1}$:
\begin{align}
\nonumber
F_{n+1}^{}F_{n+1}^{*}U_{n+1}^{}
&=(F_{n}^{}F_{n}^{*}+f_{n+1}^{}f_{n+1}^{*})U_n^{}V_n^{}\Pi_{\calI_n^{}}^\rmT\begin{bmatrix}W_n&0\\0&\rmI\end{bmatrix}\Pi_{\calJ_n^{}}\\
\label{equation.proof of explicit frame construction 15}
&=F_{n}^{}F_{n}^{*}U_n^{}V_n^{}\Pi_{\calI_n^{}}^\rmT\begin{bmatrix}W_n&0\\0&\rmI\end{bmatrix}\Pi_{\calJ_n^{}}+f_{n+1}^{}f_{n+1}^{*}U_n^{}V_n^{}\Pi_{\calI_n^{}}^\rmT\begin{bmatrix}W_n&0\\0&\rmI\end{bmatrix}\Pi_{\calJ_n^{}}.
\end{align}
To simplify the first term in \eqref{equation.proof of explicit frame construction 15}, recall that the inductive hypothesis gives $F_n^{}F_n^{*}U_n^{}=U_n^{}D_n^{}$ and that $V_n$ was constructed to satisfy $D_n^{}V_n^{}=V_n^{}D_n^{}$, implying
\begin{equation}
\label{equation.proof of explicit frame construction 16}
F_{n}^{}F_{n}^{*}U_n^{}V_n^{}\Pi_{\calI_n^{}}^\rmT\begin{bmatrix}W_n&0\\0&\rmI\end{bmatrix}\Pi_{\calJ_n^{}}
=U_n^{}V_n^{}D_n^{}\Pi_{\calI_n^{}}^\rmT\begin{bmatrix}W_n&0\\0&\rmI\end{bmatrix}\Pi_{\calJ_n^{}}
=U_n^{}V_n^{}\Pi_{\calI_n^{}}^\rmT(\Pi_{\calI_n^{}}^{}D_n^{}\Pi_{\calI_n^{}}^\rmT)\begin{bmatrix}W_n&0\\0&\rmI\end{bmatrix}\Pi_{\calJ_n^{}}.
\end{equation}
To continue simplifying~\eqref{equation.proof of explicit frame construction 16}, note that $\Pi_{\calI_n^{}}^{}D_n^{}\Pi_{\calI_n^{}}^\rmT$ is itself a diagonal matrix: for any $m,m'=1,\dotsc,M$, the definition of the permutation matrix $\Pi_{\calI_n}$ given in Step~B.2 gives
\begin{equation*}
(\Pi_{\calI_n^{}}^{}D_n^{}\Pi_{\calI_n^{}}^\rmT)(m,m')
=\ip{D_n^{}\Pi_{\calI_n^{}}^{\rmT}\delta_{m'}}{\Pi_{\calI_n^{}}^{\rmT}\delta_m}
=\ip{D_n^{}\delta_{\pi_{\calI_{n}}^{-1}(m')}}{\delta_{\pi_{\calI_{n}}^{-1}(m)}}
=\left\{\begin{array}{ll}\lambda_{n;\pi_{\calI_n}^{-1}(m)},&m=m',\\0,&m\neq m'.\end{array}\right.
\end{equation*}
That is, $\Pi_{\calI_n^{}}^{}D_n^{}\Pi_{\calI_n^{}}^\rmT$ is the diagonal matrix whose first $R_n$ diagonal entries \smash{$\set{\beta_r}_{r=1}^{R_n}=\set{\lambda_{n;\pi_{\calI_n}^{-1}(r)}}_{r=1}^{R_n}$} match those of the aforementioned $R_n\times R_n$ diagonal matrix $D_{n;\calI_n}$ and whose remaining $M-R_n$ diagonal entries \smash{$\set{\lambda_{n;\pi_{\calI_n}^{-1}(m)}}_{m=R_n+1}^M$} form the diagonal of an $(M-R_n)\times(M-R_n)$ diagonal matrix $D_{n;\calI_n^\rmc}$:
\begin{equation}
\label{equation.proof of explicit frame construction 17}
\Pi_{\calI_n^{}}^{}D_n^{}\Pi_{\calI_n^{}}^\rmT
=\begin{bmatrix}D_{n;\calI_n}&0\\0&D_{n;\calI_n^\rmc}\end{bmatrix}.
\end{equation}
Substituting~\eqref{equation.proof of explicit frame construction 17} into~\eqref{equation.proof of explicit frame construction 16} gives
\begin{equation}
\label{equation.proof of explicit frame construction 18}
F_{n}^{}F_{n}^{*}U_n^{}V_n^{}\Pi_{\calI_n^{}}^\rmT\begin{bmatrix}W_n&0\\0&\rmI\end{bmatrix}\Pi_{\calJ_n^{}}
=U_n^{}V_n^{}\Pi_{\calI_n^{}}^\rmT\begin{bmatrix}D_{n;\calI_n}&0\\0&D_{n;\calI_n^\rmc}\end{bmatrix}\begin{bmatrix}W_n&0\\0&\rmI\end{bmatrix}\Pi_{\calJ_n^{}}
=U_n^{}V_n^{}\Pi_{\calI_n^{}}^\rmT\begin{bmatrix}D_{n;\calI_n}W_n&0\\0&D_{n;\calI_n^\rmc}\end{bmatrix}\Pi_{\calJ_n^{}}.
\end{equation}
Meanwhile, to simplify the second term in \eqref{equation.proof of explicit frame construction 15}, we recall the definition of $f_{n+1}$ from Step~B.4:
\begin{equation}
\label{equation.proof of explicit frame construction 19}
f_{n+1}^{}f_{n+1}^{*}U_n^{}V_n^{}\Pi_{\calI_n^{}}^\rmT\begin{bmatrix}W_n&0\\0&\rmI\end{bmatrix}\Pi_{\calJ_n^{}}
=U_n^{}V_n^{}\Pi_{\calI_n}^{\rmT}\begin{bmatrix}v_n\\0\end{bmatrix}\begin{bmatrix}v_n^\rmT&0\end{bmatrix}\begin{bmatrix}W_n&0\\0&\rmI\end{bmatrix}\Pi_{\calJ_n^{}}
=U_n^{}V_n^{}\Pi_{\calI_n}^{\rmT}\begin{bmatrix}v_n^{}v_n^\rmT W_n^{}&0\\0&0\end{bmatrix}\Pi_{\calJ_n^{}}.
\end{equation}
Substituting~\eqref{equation.proof of explicit frame construction 18} and~\eqref{equation.proof of explicit frame construction 19} into~\eqref{equation.proof of explicit frame construction 15}, simplifying the result, and recalling~\eqref{equation.proof of explicit frame construction 4} gives
\begin{equation*}
F_{n+1}^{}F_{n+1}^{*}U_{n+1}^{}
=U_n^{}V_n^{}\Pi_{\calI_n^{}}^\rmT\begin{bmatrix}(D_{n;\calI_n}^{}+v_n^{}v_n^\rmT)W_n^{}&0\\0&D_{n;\calI_n^\rmc}\end{bmatrix}\Pi_{\calJ_n^{}}
=U_n^{}V_n^{}\Pi_{\calI_n^{}}^\rmT\begin{bmatrix}W_nD_{n+1;\calJ_n}&0\\0&D_{n;\calI_n^\rmc}\end{bmatrix}\Pi_{\calJ_n^{}}.
\end{equation*}
By introducing an extra permutation matrix and its inverse and recalling the definition of $U_{n+1}$, this simplifies to
\begin{equation}
\label{equation.proof of explicit frame construction 20}
F_{n+1}^{}F_{n+1}^{*}U_{n+1}^{}
=U_n^{}V_n^{}\Pi_{\calI_n^{}}^\rmT\begin{bmatrix}W_n&0\\0&\rmI\end{bmatrix}\Pi_{\calJ_n^{}}^{}\Pi_{\calJ_n^{}}^\rmT\begin{bmatrix}D_{n+1;\calJ_n}&0\\0&D_{n;\calI_n^\rmc}\end{bmatrix}\Pi_{\calJ_n^{}}
=U_{n+1}\Pi_{\calJ_n^{}}^\rmT\begin{bmatrix}D_{n+1;\calJ_n}&0\\0&D_{n;\calI_n^\rmc}\end{bmatrix}\Pi_{\calJ_n^{}}.
\end{equation}
We now partition the $\set{\lambda_{n+1;m}}_{m=1}^{M}$ of $D_{n+1}$ into $\calJ_n$ and $\calJ_n^\rmc$ and mimic the derivation of~\eqref{equation.proof of explicit frame construction 17}, writing $D_{n+1}$ in terms of \smash{$D_{n+1;\calJ_n}$} and \smash{$D_{n+1;\calJ_n^\rmc}$}.  Note here that by the manner in which $\calI_n$ and $\calJ_n$ were constructed, the values of $\set{\lambda_{n;m}}_{m\in\calI_n^\rmc}$ are equal to those of $\set{\lambda_{n+1;m}}_{\calJ_n^\rmc}$, as the two sets represent exactly those values which are common to both $\set{\lambda_{n;m}}_{m=1}^{M}$ and $\set{\lambda_{n+1;m}}_{m=1}^{M}$.  As these two sequences are also both in nonincreasing order, we have $D_{n;\calI_n^\rmc}=D_{n+1;\calJ_n^\rmc}$ and so
\begin{equation}
\label{equation.proof of explicit frame construction 21}
\Pi_{\calJ_n^{}}D_{n+1}^{}\Pi_{\calJ_n^{}}^\rmT
=\begin{bmatrix}D_{n+1;\calJ_n}&0\\0&D_{n+1;\calJ_n^\rmc}\end{bmatrix}
=\begin{bmatrix}D_{n+1;\calJ_n}&0\\0&D_{n;\calI_n^\rmc}\end{bmatrix} \, .
\end{equation}
Substituting~\eqref{equation.proof of explicit frame construction 21} into~\eqref{equation.proof of explicit frame construction 20} yields \smash{$F_{n+1}^{}F_{n+1}^{*}U_{n+1}^{}=U_{n+1}^{}D_{n+1}^{}$}, completing this direction of the proof.

($\Rightarrow$) Let $\set{\lambda_m}_{m=1}^{M}$ and $\set{\mu_n}_{n=1}^{N}$ be any nonnegative nonincreasing sequences, and let $F=\set{f_n}_{n=1}^N$ be any sequence of vectors whose frame operator $FF^*$ has $\set{\lambda_m}_{m=1}^{M}$ as its spectrum and has $\norm{f_n}^2=\mu_n$ for all $n=1,\dotsc,N$.  We will show that this $F$ can be constructed by following Step~A and Step~B of this result.  To see this, for any $n=1,\dotsc,N$, let $F_n=\set{f_{n'}}_{n'=1}^{n}$ and let $\set{\lambda_{n;m}}_{m=1}^{M}$ be the spectrum of the corresponding frame operator $F_n^{}F_n^{*}$.  Letting $\lambda_{0;m}:=0$ for all $m$, the proof of Theorem~\ref{theorem.necessity and sufficiency of eigensteps} demonstrated that the sequence of spectra $\set{\set{\lambda_{n;m}}_{m=1}^{M}}_{n=0}^{N}$ necessarily forms a sequence of eigensteps as specified by Definition~\ref{definition.eigensteps}.  This particular set of eigensteps is the one we choose in Step~A.

All that remains to be shown is that we can produce our specific $F$ by using Step~B.  Here, we must carefully exploit our freedom to pick $U_1$ and the $V_n$'s; the proper choice of these unitary matrices will result in $F$, while other choices will produce other sequences of vectors that are only related to $F$ through a potentially complicated series of rotations.  Indeed, note that since $\set{\set{\lambda_{n;m}}_{m=1}^{M}}_{n=0}^{N}$ is a valid sequence of eigensteps, then the other direction of this proof, as given earlier, implies that any choice of $U_1$ and $V_n$'s will result in a sequence of vectors whose eigensteps match those of $F$.  Moreover, quantities that we considered in the other direction of the proof that only depended on the choice of eigensteps, such as $\calI_n$, $\calJ_n$, $\set{\beta_r}_{r=1}^{R_n}$, $\set{\gamma_r}_{r=1}^{R_n}$, etc., are thus also well-defined in this direction; in the following arguments, we recall several such quantities and make further use of their previously-derived properties.

To be precise, let $U_1$ be any one of the infinite number of unitary matrices whose first column $u_{1;1}$ satisfies $f_1=\sqrt{\mu_1}u_{1;1}$.  We now proceed by induction, assuming that for any given $n=1,\dotsc,N-1$, we have followed Step~B and have made appropriate choices for $\set{V_{n'}}_{n'=1}^{n-1}$ so as to correctly produce $F_n=\set{f_{n'}}_{n'=1}^{n}$; we show how the appropriate choice of $V_n$ will correctly produce $f_{n+1}$.  To do so, we again write the $n$th spectrum $\set{\lambda_{n;m}}_{m=1}^{M}$ in terms of its multiplicities as \smash{$\{\lambda_{n;m(k,l)}\}_{k=1}^{K_n}\,_{l=1}^{L_{\smash{n;k}}}$}.  For any $k=1,\dotsc,K_n$, Step~B of Theorem~\ref{theorem.necessity and sufficiency of eigensteps} gives that the norm of the projection of $f_{n+1}$ onto the $k$th eigenspace of $F_n^{}F_n^*$ is necessarily given by
\begin{equation}
\label{equation.proof of explicit frame construction 22}
\norm{P_{n;\lambda_{n;m(k,1)}}f_{n+1}}^2=-\lim_{x\rightarrow\lambda_{n;m(k,1)}}(x-\lambda_{n;m(k,1)})\frac{p_{n+1}(x)}{p_n(x)} \, ,
\end{equation} 
where $p_{n}(x)$ and $p_{n+1}(x)$ are defined by~\eqref{equation.proof of explicit frame construction 1}.  Note that by picking $l=1$, $\lambda_{n;m(k,1)}$ represents the first appearance of that particular value in $\set{\lambda_{n;m}}_{m=1}^{M}$.  As such, these indices are the only ones that are eligible to be members of the set $\calI_n$ found in Step~B.2.  That is, $\calI_n\subseteq\set{m(k,1): k=1,\dotsc,K_n}$.  However, these two sets of indices are not necessarily equal, since $\calI_m$ only contains $m$'s of the form $m(k,1)$ that satisfy the additional property that the multiplicity of $\lambda_{n;m}$ as a value in $\set{\lambda_{n;m'}}_{m'=1}^{M}$ exceeds its multiplicity as a value in $\set{\lambda_{n+1;m}}_{m=1}^{M}$.  To be precise, for any given $k=1,\dotsc,K_n$, if $m(k,1)\in\calI_n^\rmc$ then $\lambda_{n;m(k,1)}$ appears as a root of $p_{n+1}(x)$ at least as many times as it appears as a root of $p_n(x)$, meaning in this case that the limit in \eqref{equation.proof of explicit frame construction 22} is necessarily zero.  If, on the other hand, $m(k,1)\in\calI_n$, then writing $\pi_{\calI_n}(m(k,1))$ as some $r\in\set{1,\dotsc,R_n}$ and recalling the definitions of $b(x)$ and $c(x)$ in~\eqref{equation.proof of explicit frame construction 1} and $v(r)$ in~\eqref{equation.proof of explicit frame construction 3}, we can rewrite \eqref{equation.proof of explicit frame construction 22} as
\begin{equation}
\label{equation.proof of explicit frame construction 23}
\norm{P_{n;\beta_r}f_{n+1}}^2
=-\lim_{x\rightarrow\beta_r}(x-\beta_r)\frac{p_{n+1}(x)}{p_n(x)}
=-\lim_{x\rightarrow\beta_r}(x-\beta_r)\frac{c(x)}{b(x)}
=-\,\frac{\displaystyle\prod_{r''=1}^{R_n} (\beta_r-\gamma_{r''})}{\displaystyle\prod_{\substack{r''=1\\r''\neq r}}^{R}(\beta_r-\beta_{r''})}
=[v_n(r)]^2.
\end{equation}
As such, we can write $f_{n+1}$ as
\begin{equation}
\label{equation.proof of explicit frame construction 24}
f_{n+1}
=\sum_{k=1}^{K_n}P_{n;\lambda_{n;m(k,1)}}f_{n+1}
=\sum_{r=1}^{R_n}P_{n;\beta_r}f_{n+1}
=\sum_{r=1}^{R_n}v_n(r)\frac1{v_n(r)}P_{n;\beta_r}f_{n+1}
=\sum_{m\in\calI_n}v_n(\pi_{\calI_n}(m))\frac1{v_n(\pi_{\calI_n}(m))}P_{n;\beta_{\pi_{\calI_n}(m)}}f_{n+1}
\end{equation}
where each $\frac1{v_n(\pi_{\calI_n}(m))}P_{n;\beta_{\pi_{\calI_n}(m)}}f_{n+1}$ has unit norm by~\eqref{equation.proof of explicit frame construction 23}.  We now pick a new orthonormal eigenbasis $\hat{U}_n:=\set{\hat{u}_{n;m}}_{m=1}^{M}$ for $F_n^{}F_n^*$ that has the property that for any $k=1,\dotsc,K_n$, both \smash{$\set{u_{n;m(k,l)}}_{l=1}^{L_{\smash{n;k}}}$} and \smash{$\set{\hat{u}_{n;m(k,l)}}_{l=1}^{L_{\smash{n;k}}}$} span the same eigenspace and, for every $m(k,1)\in\calI_n$, has the additional property that \smash{$\hat{u}_{n;m(k,1)}=\frac1{v_n(\pi_{\calI_n}(m(k,1)))}P_{n;\beta_{\pi_{\calI_n}(m(k,1))}}f_{n+1}$}.  As such, \eqref{equation.proof of explicit frame construction 24} becomes
\begin{equation}
\label{equation.proof of explicit frame construction 25}
f_{n+1}
=\sum_{m\in\calI_n}v_n(\pi_{\calI_n}(m))\hat{u}_{n;m}
=\hat{U}_n^{}\sum_{m\in\calI_n}v_n(\pi_{\calI_n}(m))\delta_m
=\hat{U}_n^{}\sum_{r=1}^{R_n}v_n(r)\delta_{\pi_{\calI_n}^{-1}(r)}
=\hat{U}_n^{}\Pi_{\calI_n}^{\rmT}\sum_{r=1}^{R_n}v_n(r)\delta_r
=\hat{U}_n^{}\Pi_{\calI_n}^{\rmT}\begin{bmatrix}v_n\\0\end{bmatrix}.
\end{equation}
Letting $V_n$ be the unitary matrix \smash{$V_n=U_n^*\hat{U}_n^{}$}, the eigenspace spanning condition gives that $V_n$ is block-diagonal whose $k$th diagonal block is of size $L_{n;k}\times L_{n;k}$.  Moreover, with this choice of $V_n$, \eqref{equation.proof of explicit frame construction 25} becomes
\begin{equation*}
f_{n+1}
=U_n^{}U_n^*\hat{U}_n^{}\Pi_{\calI_n}^{\rmT}\begin{bmatrix}v_n\\0\end{bmatrix}
=U_n^{}V_n^{}\Pi_{\calI_n}^{\rmT}\begin{bmatrix}v_n\\0\end{bmatrix}
\end{equation*}
meaning that $f_{n+1}$ can indeed be constructed by following Step~B.
\end{proof}


\section*{Acknowledgments}
The authors thank Prof.~Peter G.~Casazza for insightful discussions.  This work was supported by NSF DMS 1042701, NSF DMS 1008183, NSF CCF 1017278, AFOSR F1ATA01103J001, AFOSR F1ATA00183G003 and the A.~B.~Krongard Fellowship.  The views expressed in this article are those of the authors and do not reflect the official policy or position of the United States Air Force, Department of Defense, or the U.S.~Government.


\appendix

\section{Proof of Lemma~\ref{lemma.interlacing and nonpositive limits}}

\noindent
($\Rightarrow$)
Let $\set{\gamma_m}_{m=1}^{M}$ interlace on $\set{\beta_m}_{m=1}^{M}$, and let $\lambda=\beta_m$ for some $m=1,\dotsc,M$.  Letting $L_p$ denote the multiplicity of $\lambda$ as a root of $p(x)$, the fact that $\set{\beta_m}_{m=1}^{M}\sqsubseteq\set{\gamma_m}_{m=1}^{M}$ implies that the multiplicity $L_q$ of $\lambda$ as a root of $q(x)$ is at least $L_p-1$.  Moreover, if $L_q>L_p-1$ then our claim holds at $\lambda$ since
\begin{equation*} 
\lim_{x\rightarrow\lambda}(x-\lambda)\frac{q(x)}{p(x)}=0\leq 0.
\end{equation*}
Meanwhile, if $L_q=L_p-1$, then choosing $m_p=\min\set{m: \beta_m=\lambda}$ gives
\begin{equation}
\label{equation.proof of interlacing and nonpositive limits 1}
\begin{array}{llrcccl}
\beta_m>\lambda,&&1			&\leq&m	&\leq&m_p-1,\\
\beta_m=\lambda,&&m_p		&\leq&m	&\leq&m_p+L_p-1,\\
\beta_m<\lambda,&&m_p+L_p	&\leq&m	&\leq&M.
\end{array}
\end{equation}
We now determine a similar set of relations between $\lambda$ and all choices of $\gamma_m$.  For $1\leq m\leq m_p-1$, interlacing and~\eqref{equation.proof of interlacing and nonpositive limits 1} imply $\gamma_m\geq\beta_m>\lambda$.  If instead $m_p+1\leq m\leq m_p+L_p-1$, then interlacing and~\eqref{equation.proof of interlacing and nonpositive limits 1} imply $\lambda=\beta_m\leq\gamma_m\leq\beta_{m-1}=\lambda$ and so $\gamma_m=\lambda$.  Another possibility is to have $m_p+L_p+1\leq m\leq M$, in which case interlacing and~\eqref{equation.proof of interlacing and nonpositive limits 1} imply $\gamma_m\leq\beta_{m-1}<\lambda$.  Taken together, we have
\begin{equation}
\label{equation.proof of interlacing and nonpositive limits 2}
\begin{array}{llrcccl}
\gamma_m>\lambda,&&1			&\leq&m	&\leq&m_p-1,\\
\gamma_m=\lambda,&&m_p+1		&\leq&m	&\leq&m_p+L_p-1,\\
\gamma_m<\lambda,&&m_p+L_p+1	&\leq&m	&\leq&M.
\end{array}
\end{equation}
Note that the table~\eqref{equation.proof of interlacing and nonpositive limits 2} is unlike~\eqref{equation.proof of interlacing and nonpositive limits 1} in that in~\eqref{equation.proof of interlacing and nonpositive limits 2}, the relationship between $\gamma_m$ and $\lambda$ is still undecided for $m=m_p$ and $m=m_p+L_p$.  Indeed, in general we only know $\gamma_{m_p}\geq\beta_{m_p}=\lambda$, and so either $\gamma_{m_p}=\lambda$ or $\gamma_{m_p}>\lambda$.  Similarly, we only know $\gamma_{m_p+L_p}\leq\beta_{m_p+L_p-1}=\lambda$, so either $\gamma_{m_p+L_p}=\lambda$ or $\gamma_{m_p+L_p}<\lambda$.  Of these four possibilities, three lead to either having $L_q=L_p+1$ or $L_q=L_p$; only the case where $\gamma_{m_p}>\lambda$ and $\gamma_{m_p+L_p}<\lambda$ leads to our current assumption that $L_q=L_p-1$.  As such, under this assumption \eqref{equation.proof of interlacing and nonpositive limits 2} becomes
\begin{equation}
\label{equation.proof of interlacing and nonpositive limits 3}
\begin{array}{llrcccl}
\gamma_m>\lambda,&&1		&\leq&m	&\leq&m_p,\\
\gamma_m=\lambda,&&m_p+1	&\leq&m	&\leq&m_p+L_p-1,\\
\gamma_m<\lambda,&&m_p+L_p	&\leq&m	&\leq&M.
\end{array}
\end{equation}
We now prove our claim using~\eqref{equation.proof of interlacing and nonpositive limits 1} and~\eqref{equation.proof of interlacing and nonpositive limits 3}:
\begin{equation*} 
\lim_{x\rightarrow\lambda}(x-\lambda)\frac{q(x)}{p(x)}
=\lim_{x\rightarrow\lambda}\frac{\displaystyle(x-\lambda)^{L_p}\prod_{m=1}^{m_p}(x-\gamma_m)\prod_{m=m_p+L_p}^{M} (x-\gamma_m)}{\displaystyle(x-\lambda)^{L_p}\prod_{m=1}^{m_p-1} (x-\beta_m)\prod_{m=m_p+L_p}^{M} (x-\beta_m)}
=\frac{\displaystyle\prod_{m=1}^{m_p}(\lambda-\gamma_m)\prod_{m=m_p+L_p}^{M} (\lambda-\gamma_m)}{\displaystyle\prod_{m=1}^{m_p-1} (\lambda-\beta_m)\prod_{m=m_p+L_p}^{M} (\lambda-\beta_m)}<0.
\end{equation*}
($\Leftarrow$)  We prove by induction on $M$.  For $M=1$, we have $p(x)=x-\beta_1$ and $q(x)=x-\gamma_1$, and so if
\begin{equation*}
0
\geq\lim_{x\rightarrow\beta_1}(x-\beta_1)\frac{q(x)}{p(x)}
=\lim_{x\rightarrow\beta_1}(x-\beta_1)\frac{(x-\gamma_1)}{(x-\beta_1)}
=\lim_{x\rightarrow\beta_1}(x-\gamma_1)
=\beta_1-\gamma_1 \, ,
\end{equation*}
then $\beta_1\leq\gamma_1$, and so $\set{\gamma_1}$ interlaces on $\set{\beta_1}$, as claimed.  Now assume this direction of the proof holds for $M'=1,\dotsc,M-1$, and let $\set{\beta_m}_{m=1}^{M}$ and $\set{\gamma_m}_{m=1}^{M}$ be real, nonincreasing and have the property that
\begin{equation}
\label{equation.proof of interlacing and nonpositive limits 4}
\lim_{x\rightarrow\beta_m}(x-\beta_m)\frac{q(x)}{p(x)}\leq0 \qquad \forall m=1,\dotsc,M,
\end{equation}
where $p(x)$ and $q(x)$ are defined as in the statement of the result.  We will show that $\set{\gamma_m}_{m=1}^{M}$ interlaces on $\set{\beta_m}_{m=1}^{M}$.

To do this, we consider two cases.  The first case is when $\set{\beta_m}_{m=1}^{M}$ and $\set{\gamma_m}_{m=1}^{M}$ have no common members, that is, $\beta_m\neq\gamma_{m'}$ for all $m,m'=1,\dotsc,M$.  In this case, note that if $\beta_m=\beta_{m'}$ for some $m\neq m'$ then the corresponding limit in~\eqref{equation.proof of interlacing and nonpositive limits 4} would diverge, contradicting our implicit assumption that these limits exist and are nonpositive.  As such, in this case the values of $\set{\beta_m}_{m=1}^{M}$ are necessarily distinct, at which point \eqref{equation.proof of interlacing and nonpositive limits 4} for a given $m$ becomes:
\begin{equation}
\label{equation.proof of interlacing and nonpositive limits 5}
0
\geq\lim_{x\rightarrow\beta_m}(x-\beta_m)\prod_{m'=1}^{M}\frac{(x-\gamma_{m'})}{(x-\beta_{m'})}
=\frac{\displaystyle\prod_{m'=1}^{M}(\beta_m-\gamma_{m'})}{\displaystyle\prod_{\substack{m'=1\\m'\neq m}}^{M}(\beta_m-\beta_{m'})}
=\frac{\displaystyle\prod_{m'=1}^{M}(\beta_m-\gamma_{m'})}{\displaystyle\prod_{m'=1}^{m-1}(\beta_m-\beta_{m'})\prod_{m'=m+1}^{M}(\beta_m-\beta_{m'})}.
\end{equation}
Moreover, since $\beta_m\neq\gamma_{m'}$ for all $m,m'$, then the limit in \eqref{equation.proof of interlacing and nonpositive limits 5} is nonzero.  As the sign of the denominator on the right-hand side of \eqref{equation.proof of interlacing and nonpositive limits 5} is $(-1)^{m-1}$, the sign of the corresponding numerator is
\begin{equation*}
(-1)^m
=\sgn\biggparen{\,\prod_{m'=1}^{M}(\beta_m-\gamma_{m'})}
=\sgn(q(\beta_m))
\qquad \forall m=1,\dotsc,M.
\end{equation*}
Thus, for any $m=2,\dotsc,M$, $q(x)$ changes sign over $[\beta_m,\beta_{m-1}]$, implying by the Intermediate Value Theorem that at least one of the roots $\set{\gamma_m}_{m=1}^{M}$ of $q(x)$ lies in $(\beta_m,\beta_{m-1})$.  Moreover, since $q(x)$ is monic, we have $\lim_{x\rightarrow\infty}q(x)=\infty$; coupled with the fact that $q(\beta_1)<0$, this implies that at least one root of $q(x)$ lies in $(\beta_1,\infty)$.  Thus, each of the $M$ disjoint subintervals of \smash{$(\beta_1,\infty)\cup\bigbracket{\cup_{m=2}^{M}(\beta_m,\beta_{m-1})}$} contains at least one of the $M$ roots of $q(x)$.  This is only possible if each of these subintervals contains exactly one of these roots.  Moreover, since $\set{\gamma_m}_{m=1}^{M}$ is nonincreasing, this implies $\beta_1<\gamma_1$ and $\beta_m<\gamma_m<\beta_{m-1}$ for all $m=2,\dotsc,M$, meaning that $\set{\gamma_m}_{m=1}^{M}$ indeed interlaces on $\set{\beta_m}_{m=1}^{M}$.

We are thus left to consider the remaining case where $\set{\beta_m}_{m=1}^{M}$ and $\set{\gamma_m}_{m=1}^{M}$ share at least one common member.  Fix $\lambda$ such that $\beta_m=\lambda=\gamma_{m'}$ for at least one pair $m,m'=1,\dotsc,M$.  Let $m_p=\min\set{m: \beta_m=\lambda}$ and $m_q=\min\set{m: \gamma_m=\lambda}$.  Let $P(x)$ and $Q(x)$ be $(M-1)$-degree polynomials such that $p(x)=(x-\lambda)P(x)$ and $q(x)=(x-\lambda)Q(x)$.  Here, our assumption~\eqref{equation.proof of interlacing and nonpositive limits 4} implies
\begin{equation}
\label{equation.proof of interlacing and nonpositive limits 7}
0
\geq\lim_{x\rightarrow\beta_m}(x-\beta_m)\frac{q(x)}{p(x)}
=\lim_{x\rightarrow\beta_m}(x-\beta_m)\frac{(x-\lambda)Q(x)}{(x-\lambda)P(x)}
=\lim_{x\rightarrow\beta_m}(x-\beta_m)\frac{Q(x)}{P(x)} \qquad \forall m=1,\dotsc,M.
\end{equation}
Since $P(x)$ and $Q(x)$ satisfy~\eqref{equation.proof of interlacing and nonpositive limits 7} and have degree $M-1$, our inductive hypothesis gives that the roots $\set{\gamma_m}_{m\neq m_q}$of $Q(x)$ interlace on the roots $\set{\beta_m}_{m\neq m_p}$ of $P(x)$.

We claim that $m_q$ is necessarily either $m_p$ or $m_p+1$, that is, $m_p\leq m_q\leq m_p+1$.  We first show that $m_p\leq m_q$, a fact which trivially holds for $m_p=1$.  For $m_p>1$, the fact that $\set{\beta_m}_{m\neq m_p}\sqsubseteq\set{\gamma_m}_{m\neq m_q}$ implies that the value of the $(m_p-1)$th member of $\set{\gamma_m}_{m\neq m_q}$ is at least that of the $(m_p-1)$th member of $\set{\beta_m}_{m\neq m_p}$.  That is,  the $(m_p-1)$th member of $\set{\gamma_m}_{m\neq m_q}$ is at least $\beta_{m_p-1}>\lambda$, meaning $m_p-1\leq m_q-1$ and so $m_p\leq m_q$, as claimed.  We similarly prove that $m_q\leq m_p+1$, a fact which trivially holds for $m_p=M$.  For $m_p<M$, interlacing implies that the $m_p$th member of $\set{\beta_m}_{m\neq m_p}$ is at least the $(m_p+1)$th member of $\set{\gamma_m}_{m\neq m_q}$.  That is, the $(m_p+1)$th member of $\set{\gamma_m}_{m\neq m_q}$ is at most $\beta_{m_p+1}\leq\lambda$ and so $m_p+1\geq m_q$, as claimed.

Now, in the case that $m_q=m_p$, the fact that $\set{\beta_m}_{m\neq m_p}\sqsubseteq\set{\gamma_m}_{m\neq m_q}$ implies that
\begin{equation}
\label{equation.proof of interlacing and nonpositive limits 8}
\beta_{M}\leq\gamma_{M}\leq\dots\leq\beta_{m_p+1}\leq\gamma_{m_p+1}\leq\beta_{m_p-1}\leq\gamma_{m_p-1}\leq\dots\leq\beta_1\leq\gamma_1.
\end{equation}
Since in this case $\gamma_{m_p+1}=\gamma_{m_q+1}\leq\lambda<\beta_{m_p-1}$, the terms $\beta_{m_p}=\lambda$ and $\gamma_{m_p}=\gamma_{m_q}=\lambda$ can be inserted into~\eqref{equation.proof of interlacing and nonpositive limits 8}:
\begin{equation*}
\beta_{M}\leq\gamma_{M}\leq\dots\leq\beta_{m_p+1}\leq\gamma_{m_p+1}\leq\lambda\leq\lambda\leq\beta_{m_p-1}\leq\gamma_{m_p-1}\leq\dots\leq\beta_1\leq\gamma_1,
\end{equation*}
and so $\set{\beta_m}_{m=1}^{M}\sqsubseteq\set{\gamma_m}_{m=1}$.  In the remaining case where $m_q=m_p+1$, having $\set{\beta_m}_{m\neq m_p}\sqsubseteq\set{\gamma_m}_{m\neq m_q}$ means that
\begin{equation}
\label{equation.proof of interlacing and nonpositive limits 9}
\beta_{M}\leq\gamma_{M}\leq\dots\leq\beta_{m_p+2}\leq\gamma_{m_p+2}\leq\beta_{m_p+1}\leq\gamma_{m_p}\leq\beta_{m_p-1}\leq\gamma_{m_p-1}\leq\dots\leq\beta_1\leq\gamma_1.
\end{equation}
Since in this case $\beta_{m_p+1}\leq\lambda<\gamma_{m_q-1}=\gamma_{m_p}$, the terms $\gamma_{m_p+1}=\gamma_{m_q}=\lambda$ and $\beta_{m_p}=\lambda$ can be inserted into~\eqref{equation.proof of interlacing and nonpositive limits 9}:
\begin{equation*}
\beta_{M}\leq\gamma_{M}\leq\dots\leq\beta_{m_p+2}\leq\gamma_{m_p+2}\leq\beta_{m_p+1}\leq\lambda\leq\lambda\leq\gamma_{m_p}\leq\beta_{m_p-1}\leq\gamma_{m_p-1}\leq\dots\leq\beta_1\leq\gamma_1
\end{equation*}
and so $\set{\beta_m}_{m=1}^{M}\sqsubseteq\set{\gamma_m}_{m=1}$ in this case as well.

\section{Proof of Lemma~\ref{lemma.equal limits}}

\noindent
Fix any $m=1,\dotsc,M$, and let $L$ be the multiplicity of $\beta_m$ as a root of $p(x)$.  Since
\begin{equation}
\label{equation.proof of equal limits 1}
\lim_{x\rightarrow\beta_m}(x-\beta_m)\frac{q(x)}{p(x)}
=\lim_{x\rightarrow\beta_m}(x-\beta_m)\frac{r(x)}{p(x)} \, ,
\end{equation}
where each of these two limits is assumed to exist, then the multiplicities of $\beta_m$ as a roots of $q(x)$ and $r(x)$ are both at least $L-1$.  As such, evaluating $l$th derivatives at $\beta_m$ gives $q^{(l)}(\beta_m)=0=r^{(l)}(\beta_m)$ for all $l=0,\dots,L-2$.  Meanwhile, for $l=L-1$, l'H\^{o}pital's Rule gives
\begin{equation}
\label{equation.proof of equal limits 2}
\lim_{x\rightarrow\beta_m}(x-\beta_m)\frac{q(x)}{p(x)}
=\lim_{x\rightarrow\beta_m}\frac{q(x)}{(x-\beta_m)^{L-1}}\frac{(x-\beta_m)^L}{p(x)}
=\frac{q^{(L-1)}(\beta_m)}{(L-1)!}\frac{L!}{p^{(L)}(\beta_m)}
=\frac{Lq^{(L-1)}(\beta_m)}{p^{(L)}(\beta_m)}.
\end{equation}
Deriving a similar expression for $r(x)$ and substituting both it and~\eqref{equation.proof of equal limits 2} into~\eqref{equation.proof of equal limits 1} yields $q^{(L-1)}(\beta_m)=r^{(L-1)}(\beta_m)$.   As such, $q^{(l)}(\beta_m)=r^{(l)}(\beta_m)$ for all $l=0,\dots,L-1$.  As this argument holds at every distinct $\beta_m$, we see that $q(x)-r(x)$ has $M$ roots, counting multiplicity.  But since $q(x)$ and $r(x)$ are both monic, $q(x)-r(x)$ has degree at most $M-1$ and so $q(x)-r(x)\equiv 0$, as claimed.

\end{document}